\renewenvironment{proof}[1][\proofname]{\par
  \normalfont
  \topsep6\p@\@plus6\p@ \trivlist
  \item[\hskip\labelsep{\bfseries #1}\@addpunct{\bfseries.}]\ignorespaces
}{
  \endtrivlist
}
\renewcommand{\proofname}{Proof}
\begin{document}
\newcommand{\field}[1]{\mathbb{#1}} 
\newcommand{\ind}{\mathbbm{1}}
\newcommand{\C}{\mathbb{C}}
\newcommand{\D}{\,\mathscr{D}}
\newcommand{\E}{\,\mathrm{E}}
\newcommand{\Prob}{\,\mathrm{P}}
\newcommand{\F}{\,\mathscr{F}}
\newcommand{\I}{\,\mathrm{i}}
\newcommand{\N}{\field{N}}
\newcommand{\T}{\,\mathrm{T}}
\newcommand{\Ls}{\,\mathscr{L}}
\newcommand{\ve}{\,\mathrm{vec}}
\newcommand{\var}{\,\mathrm{var}}
\newcommand{\cov}{\,\mathrm{Cov}}
\newcommand{\vech}{\,\mathrm{vech}}
\newcommand*\dif{\mathop{}\!\mathrm{d}}
\newcommand{\difs}{\mathrm{d}}
\newcommand\mi{\mathrm{i}}
\newcommand\me{\mathrm{e}}
\newcommand{\R}{\field{R}}

\newcommand{\es}{\hat{f}_n}
\newcommand{\ess}{\hat{f}_{\flr{ns}}}
\newcommand{\p}[1]{\frac{\partial}{\partial#1}}
\newcommand{\pp}[1]{\frac{\partial^2}{\partial#1\partial#1^{\top}}}
\newcommand{\para}{\bm{\theta}}
\providecommand{\bv}{\mathbb{V}}
\providecommand{\bu}{\mathbb{U}}
\providecommand{\bt}{\mathbb{T}}
\newcommand{\flr}[1]{\lfloor#1\rfloor}
\newcommand{\ba}{B_m}
\newcommand{\bxi}{\bar{\xi}_n}
\newcommand{\sgn}{{\rm sgn \,}}
\newcommand{\rint}{\int^{\infty}_{-\infty}}

\newcommand{\dr}{\mathrm{d}}
\newcommand{\red}[1]{\textcolor{red}{#1}}

\newcommand{\skakko}[1]{\left(#1\right)}
\newcommand{\mkakko}[1]{\left\{#1\right\}}
\newcommand{\lkakko}[1]{\left[#1\right]}

\newcommand{\Z}{\field{Z}}
\newcommand{\Zo}{\field{Z}_0}

\newcommand{\abs}[1]{\lvert#1\rvert}
\newcommand{\ct}[1]{\langle#1\rangle}
\newcommand{\inp}[2]{\langle#1,#2\rangle}
\newcommand{\norm}[1]{\lVert#1 \rVert}
\newcommand{\Bnorm}[1]{\Bigl\lVert#1\Bigr  \rVert}
\newcommand{\Babs}[1]{\Bigl \lvert#1\Bigr \rvert} 
\newcommand{\ep}{\epsilon} 
\newcommand{\sumn}[1][i]{\sum_{#1 = 1}^T}
\newcommand{\tsum}[2][i]{\sum_{#1 = -#2}^{#2}}
\providecommand{\abs}[1]{\lvert#1\rvert}
\providecommand{\Babs}[1]{\Bigl \lvert#1\Bigr \rvert} 

\newcommand{\uint}{\int^{1}_{0}}
\newcommand{\freqint}{\int^{\pi}_{-\pi}}
\newcommand{\li}[1]{\mathfrak{L}(S_{#1})}

\newcommand{\cum}{{\rm cum}}

\newcommand{\xt}{\bm{X}_{t, T}}
\newcommand{\yt}{\bm{Y}_{t, T}}
\newcommand{\zt}{\bm{Z}_{t, T}}
\newcommand{\gcu}{{\rm GC}^{2 \to 1}(u)}

\newcommand{\btheta}{\bm{\bm{\theta}}}
\newcommand{\bbeta}{\bm{\eta}}
\newcommand{\bzeta}{\bm{\zeta}}
\newcommand{\bzero}{\bm{0}}
\newcommand{\bI}{\bm{I}}
\newcommand{\bd}{\bm{d}}
\newcommand{\bx}[1]{\bm{X}_{#1, T}}
\newcommand{\be}{\bm{\ep}}
\newcommand{\bp}{\bm{\phi}}

\newcommand{\act}{A_T^{\circ}}
\newcommand{\ac}{A^{\circ}}

\newcommand{\dlim}{\xrightarrow{d}}
\newcommand{\plim}{\rightarrow_{P}}

\newcommand{\ls}{\mathcal{S}}
\newcommand{\cs}{\mathcal{C}}

\providecommand{\ttr}[1]{\textcolor{red}{ #1}}
\providecommand{\ttb}[1]{\textcolor{blue}{ #1}}
\providecommand{\ttg}[1]{\textcolor{green}{ #1}}
\providecommand{\tty}[1]{\textcolor{yellow}{ #1}}
\providecommand{\tto}[1]{\textcolor{orange}{ #1}}
\providecommand{\ttp}[1]{\textcolor{purple}{ #1}}

\newcommand{\sign}{\mathop{\rm sign}}
\newcommand{\conv}{\mathop{\rm conv}}
\newcommand{\argmax}{\mathop{\rm arg~max}\limits}
\newcommand{\argmin}{\mathop{\rm arg~min}\limits}
\newcommand{\argsup}{\mathop{\rm arg~sup}\limits}
\newcommand{\arginf}{\mathop{\rm arg~inf}\limits}
\newcommand{\diag}{\mathop{\rm diag}}
\newcommand{\minimize}{\mathop{\rm minimize}\limits}
\newcommand{\maximize}{\mathop{\rm maximize}\limits}
\newcommand{\tr}{\mathop{\rm tr}}
\newcommand{\Cum}{\mathop{\rm Cum}\nolimits}
\newcommand{\Var}{\mathop{\rm Var}\nolimits}
\newcommand{\Cov}{\mathop{\rm Cov}\nolimits}

\numberwithin{equation}{section}
\theoremstyle{plain}
\newtheorem{thm}{Theorem}[section]

\newtheorem{lem}[thm]{Lemma}
\newtheorem{prop}[thm]{Proposition}
\theoremstyle{definition}
\newtheorem{defi}[thm]{Definition}
\newtheorem{assumption}[thm]{Assumption}
\newtheorem{cor}[thm]{Corollary}
\newtheorem{rem}[thm]{Remark}
\newtheorem{eg}{Example}

\title{Two step estimations via the Dantzig selector  for models of stochastic processes with high-dimensional parameters}
\author[1]{Kou Fujimori}
\author[2]{Koji Tsukuda}
\affil[1]{Faculty of Economics and Law,
Shinshu University.}
\affil[2]{Faculty of Mathematics, Kyushu University.}
\date{}
\maketitle
\begin{abstract}
We consider the sparse estimation for stochastic processes with possibly infinite-dimensional nuisance parameters, by using 
the Dantzig selector which is a sparse estimation method similar to $Z$-estimation.
When a consistent estimator for a nuisance parameter is obtained, it is possible to construct an asymptotically normal 
estimator for the parameter of interest under appropriate conditions.
Motivated by this fact, we establish the 
asymptotic behavior of the Dantzig selector 
for models of ergodic stochastic processes 
with high-dimensional parameters of interest and 
possibly infinite-dimensional nuisance parameters.
Moreover, we construct an asymptotically normal estimator by the two step estimation with help of the variable selection through the Dantzig selector 
and a consistent estimator
of the nuisance parameter.
Applications to ergodic time
series models including integer-valued autoregressive models and ergodic diffusion processes are
presented.
\end{abstract}
\section{Introduction}\label{sec:introduction}
High-dimensional modeling in statistics has been attracted much attention over two decades.
Specifically, the sparse estimation methods for 
high-dimensional parameter have been studied and 
commonly used in various fields.
Most of sparse estimation procedures such as  the Lasso introduced by \cite{tibshirani1996} are constructed by adding penalty terms to contrast functions in order to induce the sparsity of the parameters.
On the other hand, the Dantzig selector proposed by 
\cite{candes2007dantzig} is defined as inequality constrained optimization problems.
In this paper, we focus on the Dantzig selector 
type estimator for models of stochastic processes in semiparametric setting.

Let us begin with the following conditional heteroskedastic 
linear regression model:
\[
{Y}_i = \bm{\bm{\theta}}^\top \bm{Z}_i + \epsilon_i,\quad
i=1,\ldots,n,
\]
where $\bm{\bm{\theta}} \in \mathbb{R}^p$ is a regression coefficient, $\bm{Z}_i, i=1,\ldots,n$ are 
$\mathbb{R}^p$-valued covariates and 
$\epsilon_i, i=1,\ldots,n$ are independent random variables with $\E[\epsilon_i|\bm{Z}_i] = 0$, a.s.~and we put
$\Var[\epsilon_i| \bm{Z}_i] = \sigma^2(\bm{Z}_i)$.
The Dantzig selector type estimator $\hat{\bm{\bm{\theta}}}_D$ for 
$\bm{\bm{\theta}}$ is then defined as follows:
\[\hat{\bm{\bm{\theta}}}_D :=\argmin \limits_{\bm{\bm{\theta}} \in \mathcal{C}} \|\bm{\bm{\theta}}\|_1,\quad \mathcal{C} :=\{\bm{\bm{\theta}} \in \mathbb{R}^p : \|\psi_n^{(1)}(\bm{\bm{\theta}})\|_\infty \leq \lambda\},\]
where $\lambda \geq 0$ is a tuning parameter and
\[
\psi_n^{(1)}(\bm{\bm{\theta}}) := \frac{1}{n} \sum_{i=1}^n \bm{Z}_i (Y_i - \bm{\bm{\theta}}^\top \bm{Z}_i)
\]
is a score function based on the least squares method.
When $p$ is large and the true value $\bm{\bm{\theta}}_0$
is sparse, $\hat{\bm{\bm{\theta}}}_D$ behaves better than 
the ordinary least squares estimator 
under some technical conditions, such as 
restricted eigenvalue condition for Hessian matrix and sub-Gaussian property of the noise 
distribution; see, e.g., \cite{bickel2009simultaneous}
for details.
Using $\hat{\bm{\bm{\theta}}}_D$, we can consider the 
estimator $\hat{T}_D$ for true support index set 
$T_0 := \{j : \bm{\theta}_{0j}\not=0\}$, which enables us to 
consider the dimension reduction.
On the other hand, we can also consider the 
following score function:
\[
\Psi_n(\bm{\bm{\theta}}, \sigma^2)
:= \frac{1}{n}\sum_{i=1}^n \frac{\bm{Z}_i ({Y}_i - \bm{Z}_i^\top \bm{\bm{\theta}})}{\sigma^2(\bm{Z}_i)}.
\]
Then, if we can construct a consistent estimator 
$\hat{\sigma}^2(\cdot)$ of $\sigma^2(\cdot)$ under some conditions, the estimator 
$\tilde{\bm{\bm{\theta}}}$ which is a solution to the following 
equation
\[
\Psi_n(\bm{\bm{\theta}}, \hat{\sigma}^2)\approx \bm{0}
\]
is more efficient than the 
ordinary least squares estimator in low-dimensional settings;
see, e.g., \cite{robinson1987asymptotically}.
Therefore, even in high-dimensional, but sparse settings, 
we may construct more efficient estimator by the following steps:
First, construct an initial estimator and consider the variable selection 
via the Dantzig selector based on the score
function $\psi_n^{(1)}$
for a high-dimensional parameter of interest. 
Second, plugging a consistent estimator
for nuisance parameters in the score function 
$\Psi_n$, we construct the $Z$-estimator 
of the unknown parameter, which may be more efficient 
than the ordinary least squares estimator.
Inspired by this ideas, we deal with 
semiparametric models of stochastic processes
and time series 
in high-dimensional settings.

In Section \ref{sec:time series}, we consider the following 
conditionally heteroskedastic model of $1$-dimensional time series with high-dimensional parameters $\bm{\alpha}$ and $\bm{\beta}$:
\[
X_t = S(\bm{\bm{\alpha}}^\top\phi(\bm{X}_{t-1}), \bm{\beta}^\top \bm{Z}_{t-1})
+ u_t,\quad t \in \mathbb{Z},
\]
where 
\[
\E[u_t | \mathcal{F}_{t-1}] =0,\quad
\E[u_t^2 | \mathcal{F}_{t-1}] = \sigma^2(\bm{X}_{t-1};h),
\]
$\bm{X}_{t-1} = (X_{t-1},\ldots,X_{t-d})$, 
$\phi$ is a multi-dimensional functional,
$\{\bm{Z}_t\}$ is a high-dimensional covariate 
process, $\{\mathcal{F}_t\}$ is a filtration 
defined by 
$\mathcal{F}_t = \sigma(X_s, \bm{Z}_s, s \leq t)$,
and $h$ is a possibly infinite-dimensional nuisance parameter.
There are several previous works in which 
sparse estimation 
problems for time series models are considered.
For example, 
Lasso type estimators 
for a stationary Gaussian time series model in high-dimensional settings are considered in \cite{basu2015regularized}.
The Lasso for a stationary non-Gaussian time series model  is discussed  by \cite{wong2020lasso} in
high-dimensional settings.
In this paper, we consider the sparse estimation problems for conditionally heteroskedastic models and construct an 
asymptotically normal estimator for non-zero components of the true value of 
$\bm{\bm{\theta}} = (\bm{\bm{\alpha}}^\top, \bm{\bm{\beta}}^\top)^\top$ based on 
the theory of $Z$-estimation and variable selection.
As an example, we consider the higher order  
Integer-valued autoregressive model, discussed by, 
\cite{al1987first,du1991integer,latour1997multivariate, latour1998existence} and so on.
Moreover, we discuss an application to the 
estimation problems of Hawkes processes.

In Section \ref{sec:diffusion}, we consider models 
of diffusion processes with 
high-dimensional covariates.
\if0
\[
X_t = X_0 + \int_0^t S(\bm{\bm{\alpha}}^\top \phi(X_s), \bm{\bm{\beta}}^\top \bm{Z}_s) ds + \int_0^t \sigma(X_s; h) dW_s,\quad
t \geq 0,
\]
where $\{X_t\}$ is a $1$-dimensional process,
$\phi$ is a multi-dimensional functional,
$\{\bm{Z}_t\}$ is a high-dimensional covariate process and 
$\{W_t\}$ is a standard Brownian motion.
\fi
There are some previous works in which 
sparse estimation problems for models 
of diffusion processes in high-dimensional 
settings are discussed.
In 
\cite{periera2014support,
gaiffas2019sparse,ciolek2020dantzig}, 
the 
(adaptive) Lasso or 
the Dantzig selector for drift parameters of 
linear models of diffusion processes 
such as Ornstein--Uhlenbeck processes under high-dimensional settings are studied.
Their methods are based on continuous observations, which do not deal with estimation problems of diffusion coefficients.
On the other hand, the 
Dantzig selector for drift parameters
of linear models of diffusion processes when the diffusion coefficient is a finite dimensional parameter based on 
discrete observations is discussed by \cite{fujimori2019dantzig}.
In this paper, we focus on  
seimiparametric models of diffusion processes
including the possibly infinite-dimensional parameter
$h$ in the diffusion coefficient, 
based on discrete observations and construct 
an asymptotically normal estimator for 
non-zero components of the true value of 
$\bm{\bm{\theta}} = (\bm{\bm{\alpha}}^\top, \bm{\bm{\beta}}^\top)^\top$.
As an example, we consider the 
high-dimensional Ornstein--Uhlenbeck processes 
as well as \cite{gaiffas2019sparse,
ciolek2020dantzig} and discuss a sparse estimation problem based on discrete observations.

There are three major contributions in this paper.
First, we provide sufficient conditions to derive the 
rate of convergence of the Dantzig selector 
for relatively general models including 
models of diffusion processes and time series under conditionally heteroskedasticity with possibly infinite-dimensional parameter. 
Second, we derive an asymptotically normal estimator by two step estimation for the high-dimensional parameter based on the theory of 
$Z$-estimators under sparse settings.
Since we consider the case where 
the dimension of the parameter diverges, 
we show the asymptotic normality of the 
two step estimator
in an infinite-dimensional Hilbert space,
which may be a new result.
Finally, we apply the general theory to the 
estimation problems for the 
integer autoregressive model of large order 
and the linear model of diffusion processes based on discrete observations.
We verify some sufficient conditions to derive the 
rate of convergence of the Dantzig selector,
such as some moment conditions and matrix conditions under some regularity conditions.


The rest of this paper is organized as follows.
In Section \ref{sec:general}, 
we explain our estimation procedure  
by using linear regression models.
We present a sufficient conditions to 
derive the rate of convergence and selection consistency of the Dantzig selector.
After the variable selection, we apply the general 
theory for $Z$-estimation with possibly infinite-dimensional nuisance parameter, which is originally
established by \cite{nishiyama2009asymptotic}.
See also \cite{vanwellner1996} for the asymptotic theory of $Z$-estimation.
The results for the ergodic time series are
provided in Section \ref{sec:time series}, which includes a 
concrete example for integer-valued autoregressive models and numerical simulations.
In Section \ref{sec:diffusion}, we present 
results for ergodic diffusion processes.
Some proofs and technical lemmas are 
provided in Section \ref{sec:proof}.
Especially, we use the maximal inequality 
provided in \cite{nishiyama2021martingale}
to verify one of the sufficient conditions
to derive the rate of convergence of 
the Dantzig selector in Sections \ref{sec:time series} and \ref{sec:diffusion}.

Throughout this paper, we denote by $\|\bm{v}\|_q$
the $l_q$ norm of a vector $\bm{v}$ and 
$\|\bm{A}\|_q$ the operator norm of a matrix $\bm{A}$
for every $q \geq 1$.
For a vector $\bm{v} \in \mathbb{R}^p$ and 
an index set $T \subset \{1,2,\ldots,p\}$, 
we denote by $\bm{v}_T$ the sub-vector of $\bm{v}$ restricted by $T$.
Moreover, $\bm{v}_{(T)}$ is a $p$-dimensional 
vector such that 
$\bm{v}_{(T)T} = \bm{v}_T$ and $\bm{v}_{(T)T^c} = \bm{0}$.
Similarly, for a $p \times p$ matrix $\bm{A}$ and index sets $T, T' \subset \{1,2,\ldots,p\}$, 
we denote by $\bm{A}_{T, T'} = (A_{ij})_{i \in T, j \in T'}$
the sub-matrix of $A$ restricted by 
$T$ and $T'$.
For a random variable $X$ and $q \geq1$, we write $\|X\|_{L^q}$
for the $L^q$-norm of $X$ if it exists;  
$\|X\|_{L^q} = \E[|X|^q]^{1/q}$.
For $d \in \N$, $\bm{\mu} \in \R^d$, and 
non-negative definite matrix 
$\bm{V} \in \R^{d \times d}$,
we write $\mathcal{N}_d(\bm{\mu}, \bm{V})$ for the 
$d$-dimensional normal distribution with 
mean $\bm{\mu}$ 
and covariance matrix $\bm{V}$.
Moreover, we assume that all random elements are measurable in this paper.

\section{Two step estimation methods via the Dantzig selector for linear regression models}\label{sec:general}
Let $(\Omega, \mathcal{F}, \Prob)$ be a probability space.
In this section, we consider the following model
\[
Y_t = \bm{\theta}^\top \bm{Z}_t + \epsilon_t,\quad
t=1,\ldots,n,
\]
where $Y_t$ is an $\mathbb{R}$-valued response, 
$\{\bm{Z}_t\}$ is an i.i.d. $\mathbb{R}^p$-valued covariate with a finite variance, 
$\{\epsilon_t\}$ is an independent random sequence
such that 
\[
\E[\epsilon_t | \bm{Z}_t] = 0,\quad
\E[\epsilon_t^2 | \bm{Z}_t] = \sigma^2(\bm{Z}_t ; h),
\]
with a measurable function $\sigma(\cdot; \cdot)$,
$\bm{\theta} \in \Theta \subset \mathbb{R}^p$ is an 
unknown parameter of interest,
 and 
$h \in H$ is a possibly infinite-dimensional nuisance parameter on a metric space $H$ 
equipped with a metric $d_H$.
Let $\bm{\theta}_0$ be the true value of $\bm{\theta}$.
We consider the 
following random maps;
\[
\psi_n^{(1)}(\bm{\theta}):=
\frac{1}{n}\sum_{t=1}^n \bm{Z}_t(Y_t -\bm{\theta}^\top \bm{Z}_t),
\quad
\Psi_n(\bm{\theta}, h):= \frac{1}{n}\sum_{t=1}^n \frac{\bm{Z}_t(Y_t -\bm{\theta}^\top \bm{Z}_t)}{\sigma^2(\bm{Z}_t; h)},
\]
and 
\[
\tilde{\Psi}_n(\bm{\theta}, h):= \frac{1}{n}\sum_{t=1}^n \frac{\bm{Z}_t \bm{Z}_t^\top(\bm{\theta}_0 -\bm{\theta})}{\sigma^2(\bm{Z}_t; h)}.
\]
The maps $\psi_n^{(1)}$ and $\Psi_n$ are score functions 
corresponding to least squares method and 
weighted least squares method, respectively.
Moreover, $\tilde{\Psi}_n$ is 
the compensater of $\Psi_n$ which satisfies that 
\[
\Psi_n(\bm{\theta}, h) - \tilde{\Psi}_n(\bm{\theta}, h)
= \frac{1}{n}\sum_{t=1}^n \frac{\bm{Z}_t \epsilon_t}{\sigma^2(\bm{Z}_t; h)}.
\]
We are interested in the estimation problem for  
the true value $\bm{\bm{\theta}}_0=({\theta}_{01},\ldots,{\theta}_{0p})^\top$ under the following high-dimensional and sparse settings.
\begin{itemize}
\item[(i)]
The dimension $p$ of $\bm{\bm{\theta}}$ possibly tends to
$\infty$ as $n \to \infty$.
\item[(ii)]
Let $T_0$ be the support index set of $\bm{\bm{\theta}}_0$, 
i.e., 
$T_0 = \{j : {\theta}_{0j} \not =0\}$.
The sparsity $s$, which is the cardinality of 
$T_0$, is
smaller than $n$ and $p$.
\end{itemize}
\if0
Note that the condition (ii) is  not essential 
if our aim is simply to establish the 
rate of convergence of the sparse estimator.
However, we assume this condition 
to obtain the asymptotically normal estimator
in terms of the asymptotic theory of 
$Z$-estimator.
\fi
We first construct an estimator
for $T_0$ to achieve the dimension reduction.
To do this, we consider the following 
Dantzig selector type estimator 
$\hat{\bm{\bm{\theta}}}_n^{(1)}$ for $\bm{\bm{\theta}}_0$ 
based on $\psi_n^{(1)}$:
\[
\hat{\bm{\bm{\theta}}}_n^{(1)}
:= \arg \min_{\bm{\bm{\theta}} \in \mathcal{C}_n} \|\bm{\bm{\theta}}\|_1,\quad
\mathcal{C}_n := \{\bm{\bm{\theta}} \in {\Theta} : \|\psi_n^{(1)}(\bm{\bm{\theta}})\|_\infty \leq \lambda_n\},
\]
where $\lambda_n$ is a tuning parameter.
Using $\hat{\bm{\bm{\theta}}}_n^{(1)}$, we define 
$\hat{T}_n$ as follows:
\[
\hat{T}_n := \{j : |\hat{{\theta}}_{nj}^{(1)}| > \tau_n\},
\]
where $\tau_n$ is a threshold, which is a 
tuning parameter possibly depending on $n, p$, and $s$.
For every index set $T \subset \{1, \ldots,p\}$, let 
$C_T$ be a set defined by 
\[
C_T := \{\bm{v} \in \mathbb{R}^p: \|\bm{v}_{T^c}\|_1 \leq \|\bm{v}_T\|_1 \}.
\]
The $p \times p$ matrix $\bm{V}_n^{(1)}$ 
satisfies that 
for every $\bm{v} \in C_{T_0}$ and $\bm{\bm{\theta}} \in {\Theta}$,
\begin{equation}\label{psi-V}
|\bm{v}^\top [\psi_n^{(1)}(\bm{\bm{\theta}}) - \psi_n^{(1)}(\bm{\bm{\theta}}_0)]|
\geq |\bm{v}^\top \bm{V}_n^{(1)}\bm{v}|.
\end{equation}
For a matrix $\bm{M} \in \mathbb{R}^{p \times p}$,
we define the following quantity as well as \cite{huang2013oracle}:
\begin{equation}\label{def: F_infty}
F_\infty(T, \bm{M})
:= \inf_{\bm{v} \in C_{T_0}\setminus\{ \bm{0}\} } \frac{|\bm{v}^\top \bm{M} \bm{v}|}{\|\bm{v}_{T}\|_1\|\bm{v}\|_\infty}.
\end{equation}
\if0
For the matrix $\bm{V}_n^{(1)}$, we define the following 
quantity as well as \cite{huang2013oracle}:
\begin{equation}\label{def: F_infty}
F_\infty(T_0, \bm{V}_n^{(1)})
:= \inf_{\bm{v} \in C_{T_0}\setminus\{ \bm{0}\} } \frac{|\bm{v}^\top \bm{V}_n^{(1)}\bm{v}|}{\|\bm{v}_{T_0}\|_1\|\bm{v}\|_\infty}.
\end{equation}
\fi
See also \cite{buhlmann2011statistics}
for such matrix factors.
We assume the following conditions.
\begin{assumption}\label{DS condition}
\begin{itemize}
\if0
\item[(i)]
Suppose that there exists a sequence 
$\tilde{\lambda}_n$ such that
\[
\tilde{\lambda}_n \to 0,\quad
\lambda_n \asymp \tilde{\lambda}_n,\quad
n \to \infty.
\]
\fi
\item[(i)]
It holds that 
\[
\Prob\left(
\|\psi_n^{(1)}(\bm{\bm{\theta}}_0)\|_\infty > {\lambda}_n
\right) \to 0,\quad
n \to \infty.
\]
\item[(ii)]
There exists a positive constant $\delta$ such that 
\[
\Prob(F_\infty(T_0, \bm{V}_n^{(1)})> \delta) \to 1,\quad
n \to \infty.
\]
\item[(iii)]
Let $c_n := \|\psi_n^{(1)}(\hat{\bm{\theta}}_n^{(1)})-\psi_n^{(1)}(\bm{\theta}_0)\|_\infty$
and $\theta_{\min} := \inf_{j \in T_0} |\theta_{0j}|$.
Then, it holds that 
\[
\theta_{\min} > 4c_n/\delta.
\]
\end{itemize}
\end{assumption}
If $\psi_n^{(1)}(\bm{\bm{\theta}}_0)$ is a 
terminal value of square integrable martingale,
we can verify the condition (i) by using, e.g., 
stochastic maximal inequality provided in 
\cite{nishiyama2021martingale} under some moment conditions on the covariate $\{\bm{Z}_t\}$ and the noise $\{\epsilon_t\}$.
The condition (ii) strongly 
depends on the model assumption such as 
stationarity, tail property, dependence structure 
of the processes, and the 
regularity condition of the information matrix.
Therefore, in Sections 3 and 4, we check the condition (i)
for relatively general models of 
stochastic processes, while the
condition (ii) is verified for more concrete 
models such as stationary 
integer-valued autoregressive models and Ornstein--Uhlenbeck processes.
The condition (iii) is used to verify the 
performance of $\hat{T}_n$.
Note that since it holds that $c_n \to^p 0$ as $n \to \infty$ under regularity conditions 
for some concrete models in Sections 3 and 4, 
the lower bound of $\theta_{\min}$ can be sufficiently small as far as $\delta$ is 
bounded from below by a positive constant for sufficiently large $n$.

Under Assumption \ref{DS condition}, we establish the following theorem.
\begin{thm}\label{rate of convergence DS general model}
Let Assumption \ref{DS condition} hold.
Then, 
it holds that
\begin{equation}\label{error bound general}
\Prob\left(\|\hat{\bm{\bm{\theta}}}_n^{(1)} - \bm{\bm{\theta}}_0\|_\infty > \frac{2}{\delta}c_n\right)
\to 0,\quad
n \to \infty,
\end{equation}
where $c_n := \|\psi_n^{(1)}(\hat{\bm{\theta}}_n^{(1)})-\psi_n^{(1)}(\bm{\theta}_0)\|_\infty$.
Especially, it holds that 
\[
\|\hat{\bm{\bm{\theta}}}_n^{(1)} - \bm{\bm{\theta}}_0\|_\infty = O_p(\lambda_n),\quad n \to \infty.
\]
Moreover, 
if the threshold 
$\tau_n$ satisfies that 
$2 c_n / \delta< \tau_n < \theta_{\min}/2$,
then
it holds that
\begin{equation}\label{selection general}
\Prob(\hat{T}_n = T_0) \to 1,\quad n \to \infty.
\end{equation}
\end{thm}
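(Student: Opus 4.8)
The plan is to exhibit one high-probability event on which all three conclusions hold deterministically. Set $A_n := \{\|\psi_n^{(1)}(\bm{\theta}_0)\|_\infty \le \lambda_n\}$ and $B_n := \{F_\infty(T_0,\bm{V}_n^{(1)}) > \delta\}$; by parts (i) and (ii) of Assumption \ref{DS condition}, $\Prob(A_n\cap B_n)\to 1$, so it suffices to argue on $A_n\cap B_n$. Throughout write $\hat{\bm{v}} := \hat{\bm{\theta}}_n^{(1)}-\bm{\theta}_0$.

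\textbf{Step 1 (cone inclusion).} On $A_n$ the true value $\bm{\theta}_0$ lies in $\mathcal{C}_n$, so minimality of $\hat{\bm{\theta}}_n^{(1)}$ gives $\|\hat{\bm{\theta}}_n^{(1)}\|_1\le\|\bm{\theta}_0\|_1$. Splitting both $\ell_1$-norms over $T_0$ and $T_0^c$, using $\bm{\theta}_{0,T_0^c}=\bm{0}$ and the triangle inequality, I would deduce $\|\hat{\bm{v}}_{T_0^c}\|_1\le\|\hat{\bm{v}}_{T_0}\|_1$, i.e. $\hat{\bm{v}}\in C_{T_0}$, and hence $\|\hat{\bm{v}}\|_1\le 2\|\hat{\bm{v}}_{T_0}\|_1$.

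\textbf{Step 2 (error bound).} Apply \eqref{psi-V} with $\bm{v}=\hat{\bm{v}}$ and $\bm{\theta}=\hat{\bm{\theta}}_n^{(1)}$, then H\"older's inequality and the definition of $c_n$: $|\hat{\bm{v}}^\top\bm{V}_n^{(1)}\hat{\bm{v}}| \le |\hat{\bm{v}}^\top[\psi_n^{(1)}(\hat{\bm{\theta}}_n^{(1)})-\psi_n^{(1)}(\bm{\theta}_0)]| \le \|\hat{\bm{v}}\|_1 c_n \le 2\|\hat{\bm{v}}_{T_0}\|_1 c_n$. On $B_n$, definition \eqref{def: F_infty} gives the matching lower bound $|\hat{\bm{v}}^\top\bm{V}_n^{(1)}\hat{\bm{v}}| \ge \delta\,\|\hat{\bm{v}}_{T_0}\|_1\,\|\hat{\bm{v}}\|_\infty$, which also holds trivially when $\hat{\bm{v}}=\bm{0}$. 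If $\|\hat{\bm{v}}_{T_0}\|_1>0$, cancelling it yields $\|\hat{\bm{v}}\|_\infty\le 2c_n/\delta$; if $\|\hat{\bm{v}}_{T_0}\|_1=0$ the cone inclusion of Step 1 forces $\hat{\bm{v}}=\bm{0}$ and the bound is immediate. Thus $\{\|\hat{\bm{\theta}}_n^{(1)}-\bm{\theta}_0\|_\infty > 2c_n/\delta\}\subseteq (A_n\cap B_n)^c$, which proves \eqref{error bound general}. For the $O_p(\lambda_n)$ assertion, note that $\hat{\bm{\theta}}_n^{(1)}\in\mathcal{C}_n$ gives $\|\psi_n^{(1)}(\hat{\bm{\theta}}_n^{(1)})\|_\infty\le\lambda_n$, so on $A_n$ one has $c_n\le\|\psi_n^{(1)}(\hat{\bm{\theta}}_n^{(1)})\|_\infty+\|\psi_n^{(1)}(\bm{\theta}_0)\|_\infty\le 2\lambda_n$, whence $\|\hat{\bm{v}}\|_\infty\le 4\lambda_n/\delta$ on $A_n\cap B_n$.

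\textbf{Step 3 (selection consistency).} On $A_n\cap B_n$ we have $\|\hat{\bm{v}}\|_\infty\le 2c_n/\delta<\tau_n$ by the assumed bounds on $\tau_n$. For $j\notin T_0$, $|\hat{\theta}_{nj}^{(1)}|=|\hat v_j|\le\|\hat{\bm{v}}\|_\infty<\tau_n$, so $j\notin\hat T_n$; for $j\in T_0$, $|\hat{\theta}_{nj}^{(1)}|\ge|\theta_{0j}|-|\hat v_j|\ge\theta_{\min}-2c_n/\delta>\theta_{\min}-\tau_n>\tau_n$, using $\tau_n<\theta_{\min}/2$, so $j\in\hat T_n$. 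Hence $\hat T_n=T_0$ on $A_n\cap B_n$, giving \eqref{selection general}. I expect no serious obstacle here: this is the classical Dantzig-selector scheme (cone condition plus an $F_\infty$-type restricted-eigenvalue factor), specialised to the data-dependent radius $c_n$. The points needing care are the degenerate case $\|\hat{\bm{v}}_{T_0}\|_1=0$ in the cancellation of Step 2, keeping strict versus non-strict inequalities consistent so the conclusions come out as statements about events of the form $\{\,\cdot>\cdot\,\}$, and invoking the hypothesis $2c_n/\delta<\tau_n<\theta_{\min}/2$ — in which $c_n$ is random — only on the good event $A_n\cap B_n$ (equivalently, via the deterministic surrogate $c_n\le 2\lambda_n$).
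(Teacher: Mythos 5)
Your proposal is correct and follows essentially the same argument as the paper's proof: restrict to the event $A_n\cap B_n$, derive the cone inclusion from $\ell_1$-minimality, sandwich $|\hat{\bm{v}}^\top\bm{V}_n^{(1)}\hat{\bm{v}}|$ between $\delta\|\hat{\bm{v}}_{T_0}\|_1\|\hat{\bm{v}}\|_\infty$ and $2\|\hat{\bm{v}}_{T_0}\|_1 c_n$, cancel, and then split $j\in T_0$ versus $j\in T_0^c$ for the selection claim. Your explicit treatment of the degenerate case $\|\hat{\bm{v}}_{T_0}\|_1=0$ is a small point of extra care that the paper passes over silently, but it does not change the route.
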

\begin{proof}
For the proof of \eqref{error bound general}, 
it suffices to show that 
\[
\|\hat{\bm{\bm{\theta}}}_n^{(1)}-\bm{\bm{\theta}}_0\|_\infty 
\leq \frac{2}{\delta}c_n
\]
under the event 
\[
\{\|\psi_n^{(1)}(\bm{\bm{\theta}}_0)\|_\infty \leq {\lambda}_n\}\cap\left\{F_\infty(T_0, \bm{V}_n^{(1)})>\delta\right\}.
\]
Put $\bm{v} = \hat{\bm{\bm{\theta}}}_n^{(1)}-\bm{\theta}_0$.
Since $\hat{\bm{\theta}}_n^{(1)}=\bm{\theta}_0 + \bm{v}$ is a minimizer of $l_1$ norm, we can show that
\begin{eqnarray*}
0 \geq \|\bm{\theta}_0 + \bm{v}\|_1 - \|\bm{\theta}_0\|_1 
&=& \sum \limits_{j \in T_0^c} |\bm{v}_{T^c_{0}j}| + \sum \limits_{j \in T_0}
 (|\bm{\theta}_{0j}+\bm{v}_{T_{0}j}| - |\bm{\theta}_{0j}|)\\
 &\geq& \sum \limits_{j \in T_0^c} |\bm{v}_{T^c_{0}j}| + \sum \limits_{j \in T_0}
 (|\bm{\theta}_{0j}|-|\bm{v}_{T_{0}j}| - |\bm{\theta}_{0j}|)\\
 &\geq& \sum \limits_{j \in T_0^c} |\bm{v}_{T_0^c j}| -  \sum \limits_{j \in T_0} |\bm{v}_{T_{0}j}| \\
 &=& \|\bm{v}_{T_0^c}\|_1 - \|\bm{v}_{T_0}\|_1,
\end{eqnarray*} 
which means
$\bm{v} \in C_{T_0}$.
Moreover, we have 
\[
\|\bm{v}\|_1 = \|\bm{v}_{T_0^c}\|_1 + \|\bm{v}_{T_0}\|_1
\leq 2 \|\bm{v}_{T_0}\|_1.
\]
Noting that $c_n := \|\psi_n^{(1)}(\hat{\bm{\theta}}_n^{(1)})-\psi_n^{(1)}(\bm{\theta}_0)\|_\infty$, we have
\if0
Noting that by the definition of $\hat{\bm{\theta}}_n^{(1)}$,
\[
\|\psi_n^{(1)}(\hat{\bm{\theta}}_n^{(1)})-\psi_n(\bm{\theta}_0)\|_\infty \leq 2 \tilde{\lambda}_n,
\]
\fi
\begin{eqnarray*}
|\bm{v}^\top [\psi_n^{(1)}(\hat{\bm{\theta}}_n^{(1)})-\psi_n(\bm{\theta}_0)]|
\leq \|\bm{v}\|_1c_n.
\end{eqnarray*}
Furthermore, it follows from the definitions of 
$\bm{V}_n^{(1)}$ and 
$F_\infty(T_0, \bm{V}_n^{(1)})$ that 
\[
|\bm{v}^\top [\psi_n^{(1)}(\hat{\bm{\theta}}_n^{(1)})-\psi_n(\bm{\theta}_0)]|
\geq |\bm{v}^\top \bm{V}_n^{(1)}\bm{v}|
\geq 
F_\infty(T_0, \bm{V}_n^{(1)}) \|\bm{v}_{T_0}\|_1 \|\bm{v}\|_\infty
> \delta \|\bm{v}_{T_0}\|_1 \|\bm{v}\|_\infty.
\]
Combining these facts, we have 
\[
\delta \|\bm{v}_{T_0}\|_1 \| \bm{v} \|_\infty
\leq \|\bm{v}\|_1c_n \leq 2\|\bm{v}_{T_0}\|_1c_n.
\]
It implies 
\[
\|\bm{v}\|_\infty
\leq \frac{2}{\delta}c_n,
\]
which concludes the proof of \eqref{error bound general}.
Moreover, by using the triangle inequality 
and the definition of $\hat{\bm{\theta}}_n^{(1)}$,
we have 
\[
c_n = \|\psi_n^{(1)}(\hat{\bm{\theta}}_n^{(1)})-\psi_n^{(1)}(\bm{\theta}_0)\|_\infty
\leq 2 \lambda_n,
\]
which implies 
$\|\bm{v}\|_\infty = O_p(\lambda_n)$.

To prove \eqref{selection general}, it suffices to show that 
\[
\{\|\hat{\bm{\theta}}_n^{(1)} - \bm{\theta}_0\|_\infty \leq 2c_n/\delta\} \subset \{\hat{T}_n = T_0\}.
\]
Therefore, we consider the following two cases 
under the event $\{\|\hat{\bm{\theta}}_n^{(1)} - \bm{\theta}_0\|_\infty \leq 2c_n/\delta\}$.
\begin{itemize}
\item[(i)]
Suppose that $j \in T_0$.
Then, it holds that
\[
|{\theta}_{0j}| - |\hat{{\theta}}^{(1)}_{nj}|
\leq |\hat{{\theta}}^{(1)}_{nj}-{\theta}_{0j}| \leq 2c_n/\delta
\]
under the event $\{\|\hat{{\bm{\theta}}}_n^{(1)} - \bm{\theta}_0\|_\infty \leq 2c_n/\delta\}$.
Since $\tau_n < \inf_{j \in T_0}|{\theta}_{0j}| - 2c_n/\delta$,
we have 
\[
|\hat{{\theta}}_{nj}| \geq |{\theta}_{0j}| - 2c_n/\delta
> \tau_n,
\]
which implies $j \in \hat{T}_n$, i.e., 
$T_0 \subset \hat{T}_n$.
\item[(ii)]
Suppose that $j \in T_0^c$, then
it holds that 
\[
|\hat{{\theta}}^{(1)}_{nj}-{\theta}_{0j}| = |\hat{{\theta}}^{(1)}_{nj}| \leq 2c_n/\delta < \tau_n,
\]
which implies $j \in \hat{T}_n^c$, i.e., 
$T_0^c \subset \hat{T}_n^c$.
\end{itemize}
It follows from (i) and (ii) that 
$\hat{T}_n = T_0$.
\end{proof}
\begin{rem}\label{DS consistency general model}
If we choose the tuning parameter 
$\lambda_n$ satisfying that
$\lambda_n \to 0$ as $n \to \infty$,
then $\hat{\bm{\theta}}_n^{(1)}$
is a consistent estimator in terms of $l_\infty$ norms.
For the purpose of deriving the 
asymptotic behavior of $\hat{T}_n$, it is sufficient 
to obtain the $l_\infty$ error.
We can also obtain the inequality for the 
$l_q$ errors for $q \geq 1$ under some appropriate 
conditions for the Hessian matrix $V_n^{(1)}$.
\end{rem}
\begin{rem}\label{choice of threshold}
We can choose $\tau_n$ from the 
following interval 
$ (2 c_n/\delta, \theta_{\min}/2)$ to verify 
the variable selection consistency \eqref{selection general}.
Since the lower bound $2 c_n/\delta$ of the above 
interval converges to zero as 
$n \to \infty$, we can choose 
the threshold $\tau_n$ small enough
when $n$ is sufficiently large.
\end{rem}
Next, assume that we can construct a consistent 
estimator of $h$.
This can be achieved under some appropriate conditions.
We provide the following examples.
\begin{eg}\label{ex1 variance}
Suppose that $H \subset \mathbb{R}^{p_H}$, where $p_H$ is 
independent of $n$ and $p$, 
and $\sigma^2(\cdot; \cdot)$ is a 
known function up to 
the parameter $\bm{h}$.
Then, we can construct the least squares estimator 
as follows:
\[
\hat{\bm{h}}_n
= \arg \min_{\bm{h} \in H} \mathcal{B}_n(\bm{h}),
\]
where 
\[
\mathcal{B}_n(\bm{h})
= \frac{1}{n}\sum_{t=1}^n \left\{
(Y_t-\hat{\bm{\theta}}_n^{(1) \top}\bm{Z}_t)^2
- \sigma^2(\bm{Z}_t; \bm{h})
\right\}^2.
\]
Sufficient conditions for the consistency 
of $\hat{\bm{h}}_n$ can be found in 
\cite{nishiyama2009asymptotic}.
\end{eg}
\begin{eg}\label{ex2 variance}
Suppose that $H \subset \mathbb{R}^p$ and 
$\sigma^2(\bm{Z}_t, \bm{h}) = \sigma^2(\bm{h}^\top \bm{Z}_t)$, where $\sigma^2(\cdot)$ is a known function. 
\begin{itemize}
\item[(i)]
When 
$T_{h} := \{j : h_{0j} \not= 0\} \subset T_0$,
we can construct the estimator as follows:
\[
\hat{\bm{h}}_{n \hat{T}_n}
= \arg \min_{\bm{h} \in H} \mathcal{B}_n(\bm{h}_{\hat{T}_n}),\quad
\hat{\bm{h}}_{n \hat{T}_n^c} = \bm{0},
\]
where 
\[
\mathcal{B}_n(\bm{h}_{\hat{T}_n})
= \frac{1}{n}\sum_{t=1}^n \left\{
(Y_t-\hat{\bm{\theta}}_{n \hat{T}_n}^{(1) \top}\bm{Z}_{
t \hat{T}_n})^2
- \sigma^2(\bm{h}_{\hat{T}_n}^\top \bm{Z}_{t \hat{T}_n})
\right\}^2.
\]
Under the assumptions of Theorem \ref{rate of convergence DS general model}, the consistency can be proved by the combination of  
Slutsky theorem and the similar way to prove the 
consistency of the estimator in Example \ref{ex1 variance}.
\item[(ii)]
When $T_h \not \subset T_0$, 
we should apply some sparse estimation method to 
estimate $\bm{h}$.
For example, consider the following score 
function:
\[
\Phi_n(\bm{h})
= \frac{1}{n} \sum_{t=1}^n \frac{\partial}{\partial \bm{h}} \sigma^2(\bm{h}^\top \bm{Z}_t)
\left\{
(Y_t-\hat{\bm{\theta}}_n^{(1) \top}\bm{Z}_t)^2
- \sigma^2(\bm{Z}_t; \bm{h})
\right\}.
\]
Then, applying the 
Dantzig selector type estimation procedure, 
we obtain a consistent estimator 
under appropriate conditions.
\end{itemize}
\end{eg}
\begin{eg}\label{ex3 variance}
Suppose that 
$H$ is a set of continuous functions
and 
$\sigma^2(\bm{Z}_t, h) = h(\bm{Z}_t)$ with unknown 
function $h$.
Consider the case that 
$h(\bm{Z}_t) = h({\bm{Z}}_{t(T_0)})$, 
where for a vector $\bm{z} \in \mathbb{R}^p$ and $T$, 
${\bm{z}}_{(T)}$ is a 
$p$-dimensional vector such that 
${\bm{z}}_{(T)T} = \bm{z}_T$ and 
${\bm{z}}_{(T)T^c} = \bm{0}$.
This means that 
the conditional variance of the error term 
depends only on $\bm{Z}_{t T_0}$.
Then, we can consider some nonparametric estimation method such as  
kernel estimation by using $\bm{Z}_{t \hat{T}_n}, t=1,\ldots,n$.
See, e.g., \cite{masry1995nonparametric} or 
\cite{carrol1982adapting} for 
sufficient conditions to prove the consistency 
of the kernel estimator.
\end{eg}
Hereafter, to introduce an asymptotically normal estimator for non-zero components of $\bm{\theta}_0$, we assume that $s$ is independent of $n$ and $p$.
For every index set $T \subset \{1,\ldots,p\}$, 
we consider the following random maps 
on $\Theta_T \times H$ to $\mathbb{R}^{|T|}$:
\[
\Psi_{n T} : {\Theta}_T \times H \to \mathbb{R}^{|T|},\quad
\tilde\Psi_{n T} : {\Theta}_T \times H \to \mathbb{R}^{|T|},
\]
where ${\Theta}_T$ is the set of sub-vectors
of ${\Theta}$ restricted by $T$ and 
\[
\Psi_{n T} (\bm{\theta}_T, h)=
\frac{1}{n} \sum_{t=1}^n  \frac{\bm{Z}_{t T}(Y_t- \bm{\theta}_{T}^\top \bm{Z}_{t T})}{\sigma^2({\bm{Z}}_{t(T)}; h)},\quad
\tilde{\Psi}_{n T} (\bm{\theta}_T, h)
= \frac{1}{n} \sum_{t=1}^n  \frac{\bm{Z}_{t T} \bm{Z}_{t T}^\top (\bm{\theta}_{0T}-\bm{\theta}_{T})}{\sigma^2({\bm{Z}}_{t(T)}; h)}.
\]
\if0
By using $\hat{\bm{\theta}}_n^{(1)}$
and $\hat{T}_n$, we can apply the estimation method to construct an estimator $h$
discussed by, e.g., \cite{robinson1987asymptotically}, \cite{nishiyama2009asymptotic}, and \cite{stone1975adaptive}.
\fi
Let $\hat{h}_n$ be an estimator of $h \in H$ such that $d_H(\hat{h}_n, h_0) = o_p(1)$ as $n \to \infty$.
Then, we can construct the new estimator 
$\tilde{\bm{\theta}}_n$ for the parameter $\bm{\theta}$ of interest, with help of $\hat{h}_n$ and 
$\hat{T}_n$, as a solution to the following 
equations:
\[
\Psi_{n\hat{T}_n}(\tilde{\bm{\theta}}_{n\hat{T}_n}, \hat{h}_n) \approx \bm{0},\quad
\tilde{\bm{\theta}}_{n\hat{T}_n^c} = \bm{0}.
\]
We consider the following conditions to 
deduce the asymptotic behavior 
of $\tilde{\bm{\theta}}_n$.
\begin{assumption}\label{regularity sec2}
\begin{itemize}
\item[(i)]
The function $\sigma^2$ satisfies that
\[
\sigma^2(\bm{z}; h)
= \sigma^2(\bm{z}_{(T_0)}; h),\quad
\bm{z} \in \mathbb{R}^p.
\]
\item[(ii)]
There exists a $\mathbb{R}$-valued 
measurable function 
$\Lambda(\cdot)$ on $\mathbb{R}^{s}$ such that 
\[
\sup_{\bm{\theta}_{T_0} \in \Theta_{T_0}} 
|\bm{\theta}_{T_0}^\top \bm{z}_{T_0}| \leq \Lambda(\bm{z}_{T_0}),
\]
\[
\sup_{h \in H} \sigma^2(\bm{z}; h) \leq \Lambda(\bm{z}_{T_0}),
\]
and for every $h_1, h_2 \in H$, 
\[
|\sigma^2(\bm{z}_{(T_0)}; h_1) - \sigma^2(\bm{z}_{(T_0)}; h_2)|
\leq \Lambda(\bm{z}_{T_0})d_H(h_1, h_2).
\]
\item[(ii)]
There exists a constant $c>0$ such that 
\[
\inf_{\bm{z}_{T_0} \in \mathbb{R}^s, h \in H} \sigma^2(\bm{z}_{(T_0)}; h) >c.
\]
\item[(iii)]
The matrix 
\[
\bm{I}(\bm{\theta}_{0 T_0}, h_0)
= \E \left[
\frac{\bm{Z}_{t T_0} \bm{Z}_{t T_0}^\top}{\sigma^2(\bm{Z}_{t (T_0)}; h_0)}
\right]
\]
is finite and invertible.
\item[(iv)]
It follows that 
\[
\int_0^1 \sqrt{\log N(H, d_H, \epsilon)} d \epsilon < \infty,
\]
where $N(H, d_H, \epsilon)$ is 
$\epsilon$-covering number of $H$ with respect to 
the metric $d_H$.
\item[(v)]
It holds that 
\[
\E[|\Lambda(\bm{Z}_{t T_0})|^4] < \infty.
\]
\end{itemize}
\end{assumption} 
Then, we have the following lemma.
\begin{lem}\label{jain-marcus}
Suppose that Assumptions \ref{DS condition}
and \ref{regularity sec2} hold and 
let $\rho$ be a metric in $\Theta_{T_0} \times H$
defined by 
\[
\rho((\bm{\theta}_{T_0}, h), (\bm{\theta}_{T_0}', h'))
= \|\bm{\theta}_{T_0}-\bm{\theta}_{T_0}'\|_2 \lor  
d_H(h, h').
\]
\begin{itemize}
\item[(i)]
It holds that 
\[
\sqrt{n} \{\Psi_{n T_0} - \tilde{\Psi}_{n T_0}\}
\to^d G_{T_0}\quad n \to \infty
\]
in $C_{\rho}(\Theta_{T_0}\times H)$,
where 
$G_{T_0}$ is a zero-mean Gaussian random field 
with covariance function
\[
\E\left[
G_{T_0}(\bm{\theta}_{T_0}, h)G_{T_0}(\bm{\theta}_{T_0}, h)^\top
\right]
= \E \left[
\frac{\bm{Z}_{t T_0}\bm{Z}_{t T_0}^\top \sigma^2(\bm{Z}_{t (T_0)}; h_0)}{\sigma^2(\bm{Z}_{t(T_0)}; h)\sigma^2(\bm{Z}_{t(T_0)}; h)}
\right],
\]
and 
\[
C_{\rho}(\Theta_{T_0}\times H)
= \{f : \Theta_{T_0}\times H \to \mathbb{R}\ |\  
f\ \mbox{is continuous with respect to }\rho\}.
\]
\item[(ii)]
For every sequence $\{(\bm{\theta}_{n T_0}, h_n)\}$
such that 
$\rho((\bm{\theta}_{n T_0}, h_n), (\bm{\theta}_{0 T_0}, h_0)) = o_p(1)$ as $n \to \infty$, 
it holds that 
\[
\tilde{\Psi}_{n T_0}({\bm{\theta}}_{n T_0}, {h}_n)
- \tilde{\Psi}_{n T_0}(\bm{\theta}_{T_0}, h_0)
-\bm{I}(\bm{\theta}_{0 T_0}, h_0)({\bm{\theta}}_{n T_0}-\bm{\theta}_{0 T_0})
= o_{p}(\|{\bm{\theta}}_{n T_0}-\bm{\theta}_{0 T_0}\|_2)
\]
\end{itemize}
\end{lem}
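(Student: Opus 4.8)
The plan is to treat the two parts separately, as they rely on different tools. For part (i), the key observation is that $\sqrt{n}\{\Psi_{nT_0} - \tilde{\Psi}_{nT_0}\}$ evaluated at $(\bm{\theta}_{T_0}, h)$ equals $n^{-1/2}\sum_{t=1}^n \bm{Z}_{tT_0}\epsilon_t / \sigma^2(\bm{Z}_{t(T_0)}; h)$, which does not in fact depend on $\bm{\theta}_{T_0}$; so the Gaussian field is really indexed by $h \in H$ alone (the stated covariance reflects this). The summands form an i.i.d.\ (in $t$) array of mean-zero random elements of $C(H)$, since $\E[\epsilon_t \mid \bm{Z}_t] = 0$. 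First I would establish finite-dimensional convergence via the multivariate Lindeberg--Feller CLT, using Assumption \ref{regularity sec2}(iii) for the lower bound on $\sigma^2$ and (v) (with $|\bm{Z}_{tT_0}\epsilon_t| $ controlled through $\Lambda$ and the noise moments) for the Lyapunov condition and finiteness of the limiting covariance. Then I would establish tightness/asymptotic equicontinuity: the Lipschitz bound in Assumption \ref{regularity sec2}(ii) gives, for $h_1, h_2 \in H$,
\[
\left| \frac{1}{\sigma^2(\bm{Z}_{t(T_0)}; h_1)} - \frac{1}{\sigma^2(\bm{Z}_{t(T_0)}; h_2)} \right|
\le \frac{\Lambda(\bm{Z}_{tT_0})}{c^2}\, d_H(h_1, h_2),
\]
so the process has sub-Gaussian increments with an $L^2$-Lipschitz envelope; combined with the entropy integral condition (iv), the Jain--Marcus central limit theorem for $C(H)$-valued i.i.d.\ sums (whence the lemma's name) yields the functional CLT. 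I would need to note that $\Theta_{T_0}$ is a bounded (hence totally bounded) subset of $\R^s$, so adjoining it to $H$ does not disturb the entropy condition; the process is trivially continuous — indeed constant — in the $\bm{\theta}_{T_0}$ coordinate.

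For part (ii), this is a differentiability/stochastic-equicontinuity statement about the deterministic-looking compensator $\tilde{\Psi}_{nT_0}$. Write
\[
\tilde{\Psi}_{nT_0}(\bm{\theta}_{T_0}, h) = \left( \frac{1}{n}\sum_{t=1}^n \frac{\bm{Z}_{tT_0}\bm{Z}_{tT_0}^\top}{\sigma^2(\bm{Z}_{t(T_0)}; h)} \right)(\bm{\theta}_{0T_0} - \bm{\theta}_{T_0}) =: \bm{A}_n(h)(\bm{\theta}_{0T_0} - \bm{\theta}_{T_0}).
\]
Then $\tilde{\Psi}_{nT_0}(\bm{\theta}_{nT_0}, h_n) - \tilde{\Psi}_{nT_0}(\bm{\theta}_{0T_0}, h_0) + \bm{A}_n(h_n)(\bm{\theta}_{nT_0} - \bm{\theta}_{0T_0}) = 0$ exactly (the first term is zero since $\bm{\theta}_{0T_0} - \bm{\theta}_{0T_0} = \bm{0}$), so the displayed quantity equals $(\bm{I}(\bm{\theta}_{0T_0}, h_0) - \bm{A}_n(h_n))(\bm{\theta}_{nT_0} - \bm{\theta}_{0T_0})$, and it suffices to show $\|\bm{A}_n(h_n) - \bm{I}(\bm{\theta}_{0T_0}, h_0)\|_2 = o_p(1)$. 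I would split this as $\|\bm{A}_n(h_n) - \bm{A}_n(h_0)\|_2 + \|\bm{A}_n(h_0) - \bm{I}(\bm{\theta}_{0T_0}, h_0)\|_2$. The second term is $o_p(1)$ by the law of large numbers (the summands are i.i.d.\ with finite mean by Assumptions \ref{regularity sec2}(iii) and (v)). For the first term, the Lipschitz estimate above gives
\[
\|\bm{A}_n(h_n) - \bm{A}_n(h_0)\|_2 \le \frac{d_H(h_n, h_0)}{c^2} \cdot \frac{1}{n}\sum_{t=1}^n \|\bm{Z}_{tT_0}\|_2^2\, \Lambda(\bm{Z}_{tT_0}),
\]
and since $d_H(h_n, h_0) = o_p(1)$ while the average is $O_p(1)$ (finite mean, using that $\|\bm{Z}_{tT_0}\|_2^2 \le s\, \Lambda(\bm{Z}_{tT_0})$ up to constants from Assumption \ref{regularity sec2}(ii) together with (v)'s fourth-moment bound), this is $o_p(1)$ as well.

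The main obstacle is the functional CLT in part (i): verifying the precise hypotheses of the Jain--Marcus theorem — in particular producing an $L^2$ Lipschitz envelope and checking the entropy integral is finite — and confirming that the limiting covariance kernel is exactly the one stated, which amounts to computing $\E[\bm{Z}_{tT_0}\bm{Z}_{tT_0}^\top \epsilon_t^2 / (\sigma^2(\bm{Z}_{t(T_0)}; h)\sigma^2(\bm{Z}_{t(T_0)}; h'))]$ and using $\E[\epsilon_t^2 \mid \bm{Z}_t] = \sigma^2(\bm{Z}_{t(T_0)}; h_0)$ (invoking Assumption \ref{regularity sec2}(i) so that the conditional variance depends only on $\bm{Z}_{t(T_0)}$). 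Part (ii) is essentially routine once the reduction to a matrix LLN is made; the only care needed is the dominating-function bookkeeping tying $\|\bm{Z}_{tT_0}\|_2$ to $\Lambda$, which follows from the first bound in Assumption \ref{regularity sec2}(ii) applied to suitable coordinate vectors.
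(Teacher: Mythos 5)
Your proposal is correct and follows essentially the same route as the paper: for (i) the paper simply notes the representation $\sqrt{n}\{\Psi_{nT_0}-\tilde{\Psi}_{nT_0}\}(\bm{\theta}_{T_0},h)=n^{-1/2}\sum_t \bm{Z}_{tT_0}\epsilon_t/\sigma^2(\bm{Z}_{t(T_0)};h)$ and invokes Proposition 4.5 of \cite{nishiyama2000weak} (a Jain--Marcus-type functional CLT), whose hypotheses are exactly what you verify by hand, and for (ii) it uses the same split into the $h_0$-term (handled by the LLN) plus a Lipschitz remainder controlled by $d_H(h_n,h_0)=o_p(1)$. The only discrepancy is the sign in front of $\bm{I}(\bm{\theta}_{0T_0},h_0)$, where your reading (requiring $\bm{A}_n(h_n)\to^p\bm{I}$) agrees with the paper's own computation and with its intended use, the displayed statement of the lemma having an apparent sign typo.
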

\begin{proof}
\begin{itemize}
\item[(i)]
Note that 
\[
\sqrt{n} \{\Psi_{n T_0}(\bm{\theta}_{T_0}, h) - \tilde{\Psi}_{n T_0}(\bm{\theta}_{T_0}, h)\}
= \frac{1}{\sqrt{n}}\sum_{t=1}^n \frac{\bm{Z}_{t T_0} \epsilon_t}{\sigma^2(\bm{Z}_{t(T_0)}; h)}.
\]
Then, we can apply Proposition 4.5 of \cite{nishiyama2000weak} to deduce the fact.
\item[(ii)]
Using the law of large numbers and 
Assumption \ref{regularity sec2},
we have 
\begin{eqnarray*}
\lefteqn{
\tilde{\Psi}_{n T_0}({\bm{\theta}}_{n T_0}, {h}_n)
- \tilde{\Psi}_{n T_0}(\bm{\theta}_{0T_0}, h_0)}\\
&=& \frac{1}{n} \sum_{t=1}^n \frac{\bm{Z}_{t T_0} \bm{Z}_{t T_0}^\top}{\sigma^2(\bm{Z}_{t(T_0)}; h_n)}(\bm{\theta}_{0 T_0}- \bm{\theta}_{n T_0})\\
&=& \frac{1}{n} \sum_{t=1}^n \frac{\bm{Z}_{t T_0} \bm{Z}_{t T_0}^\top}{\sigma^2(\bm{Z}_{t (T_0)}; h_0)}(\bm{\theta}_{0 T_0}- \bm{\theta}_{n T_0})\\
&&+ \frac{1}{n} \sum_{t=1}^n \bm{Z}_{t T_0} \bm{Z}_{t T_0}^\top (\bm{\theta}_{0 T_0}- \bm{\theta}_{n T_0})
\left\{
\frac{1}{\sigma^2(\bm{Z}_{t(T_0)}; h_n)} - \frac{1}{\sigma^2(\bm{Z}_{t(T_0)}; h_0)}
\right\}\\
&=& - \bm{I}(\bm{\theta}_{0 T_0}, h_0)(\bm{\theta}_{n T_0}-\bm{\theta}_{0 T_0})
+ o_p(\|\bm{\theta}_{n T_0}-\bm{\theta}_{0 T_0}\|_2).
\end{eqnarray*}
\end{itemize}
\end{proof}
Then, we establish the asymptotic normality of $\tilde{\bm{\theta}}_n$.
\begin{lem}\label{normality general model}
Suppose that Assumptions \ref{DS condition}
and \ref{regularity sec2} hold.
Assume moreover that 
$2 c_n / \delta< \tau_n < \theta_{\min}/2$.
Then, it holds that 
\[
\sqrt{n} (\tilde{\bm{\theta}}_{n \hat{T}_n}- \bm{\theta}_{0 T_0})1_{\{\hat{T}_n = T_0\}}
\to^d \mathcal{N}_s(\bm{0}, \bm{I}(\bm{\theta}_{0 T_0}, h_0)^{-1}),\quad n \to \infty.
\]
\end{lem}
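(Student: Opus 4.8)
The plan is to reduce the claim to the asymptotic normality of the $Z$-estimator $\tilde{\bm\theta}_{nT_0}$ solving $\Psi_{nT_0}(\tilde{\bm\theta}_{nT_0},\hat h_n)\approx\bm 0$ on the \emph{fixed} index set $T_0$, and then transfer this to the random set $\hat T_n$ using the event $\{\hat T_n=T_0\}$. First I would invoke Theorem \ref{rate of convergence DS general model}: since $2c_n/\delta<\tau_n<\theta_{\min}/2$, we have $\Prob(\hat T_n=T_0)\to 1$, so on the indicator event the estimator $\tilde{\bm\theta}_{n\hat T_n}$ coincides with $\tilde{\bm\theta}_{nT_0}$, and it suffices to show $\sqrt n(\tilde{\bm\theta}_{nT_0}-\bm\theta_{0T_0})\to^d\mathcal N_s(\bm 0,\bm I(\bm\theta_{0T_0},h_0)^{-1})$ (then multiply by $1_{\{\hat T_n=T_0\}}\to^p 1$ and apply Slutsky). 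One should also check that on $\{\hat T_n=T_0\}$ the plugged-in nuisance estimator used in the definition of $\tilde{\bm\theta}_{n\hat T_n}$ is consistent, $d_H(\hat h_n,h_0)=o_p(1)$, which is assumed for $\hat h_n$.

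Next I would run the standard $Z$-estimation argument on $T_0$. Write the defining relation as
\[
\Psi_{nT_0}(\tilde{\bm\theta}_{nT_0},\hat h_n)
=\bigl\{\Psi_{nT_0}(\tilde{\bm\theta}_{nT_0},\hat h_n)-\tilde\Psi_{nT_0}(\tilde{\bm\theta}_{nT_0},\hat h_n)\bigr\}
+\tilde\Psi_{nT_0}(\tilde{\bm\theta}_{nT_0},\hat h_n),
\]
and recall $\Psi_{nT_0}(\bm\theta_{0T_0},h_0)-\tilde\Psi_{nT_0}(\bm\theta_{0T_0},h_0)=\frac1n\sum_t \bm Z_{tT_0}\epsilon_t/\sigma^2(\bm Z_{t(T_0)};h_0)$. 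By Lemma \ref{jain-marcus}(i), $\sqrt n\{\Psi_{nT_0}-\tilde\Psi_{nT_0}\}$ converges weakly in $C_\rho(\Theta_{T_0}\times H)$ to the Gaussian field $G_{T_0}$; evaluating the limit at $(\bm\theta_{0T_0},h_0)$ and using continuity together with $(\tilde{\bm\theta}_{nT_0},\hat h_n)\to^p(\bm\theta_{0T_0},h_0)$ gives
\[
\sqrt n\{\Psi_{nT_0}-\tilde\Psi_{nT_0}\}(\tilde{\bm\theta}_{nT_0},\hat h_n)
=\sqrt n\{\Psi_{nT_0}-\tilde\Psi_{nT_0}\}(\bm\theta_{0T_0},h_0)+o_p(1)
\to^d \mathcal N_s\!\bigl(\bm 0,\bm I(\bm\theta_{0T_0},h_0)\bigr),
\]
since the covariance of $G_{T_0}$ at $(\bm\theta_{0T_0},h_0)$ reduces to $\E[\bm Z_{tT_0}\bm Z_{tT_0}^\top\sigma^2(\bm Z_{t(T_0)};h_0)/\sigma^2(\bm Z_{t(T_0)};h_0)^2]=\bm I(\bm\theta_{0T_0},h_0)$ by Assumption \ref{regularity sec2}(i). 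For the compensator term, Lemma \ref{jain-marcus}(ii) with the sequence $(\tilde{\bm\theta}_{nT_0},\hat h_n)$ gives the linearization
\[
\tilde\Psi_{nT_0}(\tilde{\bm\theta}_{nT_0},\hat h_n)-\tilde\Psi_{nT_0}(\bm\theta_{0T_0},h_0)
=-\bm I(\bm\theta_{0T_0},h_0)(\tilde{\bm\theta}_{nT_0}-\bm\theta_{0T_0})+o_p(\|\tilde{\bm\theta}_{nT_0}-\bm\theta_{0T_0}\|_2),
\]
and $\tilde\Psi_{nT_0}(\bm\theta_{0T_0},h_0)=\bm 0$ by definition. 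Combining, one solves for $\sqrt n(\tilde{\bm\theta}_{nT_0}-\bm\theta_{0T_0})$: up to $o_p(1+\sqrt n\|\tilde{\bm\theta}_{nT_0}-\bm\theta_{0T_0}\|_2)$ terms it equals $\bm I(\bm\theta_{0T_0},h_0)^{-1}$ times the asymptotically normal term above, and a standard bootstrapping/rate argument (the $o_p$ remainder is negligible once $\sqrt n$-consistency is established, which itself follows from the same identity) yields $\sqrt n(\tilde{\bm\theta}_{nT_0}-\bm\theta_{0T_0})\to^d\mathcal N_s(\bm 0,\bm I(\bm\theta_{0T_0},h_0)^{-1})$ by the continuous mapping theorem and Assumption \ref{regularity sec2}(iii).

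The main obstacle I anticipate is the consistency and $\sqrt n$-rate of $\tilde{\bm\theta}_{nT_0}$ itself: Lemma \ref{jain-marcus}(ii) is only stated for sequences already known to converge in probability to $(\bm\theta_{0T_0},h_0)$, so one must first argue $\tilde{\bm\theta}_{nT_0}\to^p\bm\theta_{0T_0}$ (using identifiability from the invertibility of $\bm I$ and uniform convergence of $\tilde\Psi_{nT_0}$ on compacta, a consequence of the envelope condition in Assumption \ref{regularity sec2}(ii),(v) and the entropy bound (iv)) before the linearization can be applied, and then upgrade to $\sqrt n$-consistency by feeding the stochastic-equicontinuity bound back into the expansion. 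This is the classical $Z$-estimation scheme (cf.\ \cite{vanwellner1996}, \cite{nishiyama2009asymptotic}); care is needed because the dimension $s$ is fixed but the ambient problem is high-dimensional, so all arguments must be confined to the $s$-dimensional subproblem indexed by $T_0$, which is exactly what the event $\{\hat T_n=T_0\}$ buys us. A minor additional point is handling the "$\approx\bm 0$" in the defining equation: one takes $\tilde{\bm\theta}_{nT_0}$ to satisfy $\|\Psi_{nT_0}(\tilde{\bm\theta}_{nT_0},\hat h_n)\|_2=o_p(n^{-1/2})$, which is harmless and absorbed into the $o_p(1)$ on the normal side.
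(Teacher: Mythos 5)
Your proposal is correct and takes essentially the same route as the paper: reduce to the fixed index set $T_0$ via the selection consistency $\Prob(\hat{T}_n=T_0)\to 1$, establish asymptotic normality of $\tilde{\bm{\theta}}_{nT_0}$ by the semiparametric $Z$-estimation scheme whose ingredients are exactly Lemma \ref{jain-marcus}(i)--(ii), and finish with Slutsky. The only difference is that the paper invokes Theorem 2.1 of \cite{nishiyama2009asymptotic} as a black box at that step, whereas you inline its proof; your explicit flagging of the preliminary consistency of $\tilde{\bm{\theta}}_{nT_0}$ corresponds to a hypothesis of that cited theorem which the paper's one-line proof leaves implicit.
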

\if0
\begin{thm}\label{normality general model}
Let Assumption \ref{DS condition} hold.
Assume that $s$ is independent of $n$ and $p$.
Suppose that there exist a sequence $r_n \uparrow \infty$, some fixed point $(\bm{\theta}_0, h_0)$ and 
an invertible matrix $\bm{V}_{\bm{\theta}_0, h_0}$
which satisfy the following conditions.
\begin{itemize}
\item[(i)]
There exists a neighborhood $U \subset {\Theta}_{T_0} \times H$ of $(\bm{\theta}_{0 T_0}, h_0)$ such that 
\[
r_n(\Psi_{n T_0} - \tilde{\Psi}_{n T_0})\to^d Z\quad
\mbox{in}\  \ell^\infty(U),
\]
where almost all paths 
$(\bm{\theta}, h) \rightsquigarrow Z(\bm{\theta}, h)$ are continuous with 
respect to $\rho=\|\cdot\|_2 \lor d_H$.
\item[(ii)]
For given random sequence $(\hat{\bm{\theta}}_{n T_0}, \hat{h}_n)$, it holds that 
\[
\tilde{\Psi}_{n T_0}(\hat{\bm{\theta}}_{n T_0}, \hat{h}_n)
- \tilde{\Psi}_{n T_0}(\bm{\theta}_{T_0}, h_0)
-\bm{V}_{\bm{\theta}_0, h_0}(\hat{\bm{\theta}}_{n T_0}-\bm{\theta}_{0 T_0})
= o_{p}(r_n^{-1} + \|\hat{\bm{\theta}}_{n T_0}-\bm{\theta}_{0 T_0}\|_2)
\]
and that 
\[
\|\hat{\bm{\theta}}_{n T_0} - \bm{\theta}_{0 T_0}\|_2 \lor d_H(\hat{h}_n, h_0) = o_{p}(1),\quad
\Psi_{n T_0}(\hat{\bm{\theta}}_{n T_0}, \hat{h}_n) = o_{p}(r_n^{-1}),
\]
\[
\tilde{\Psi}_{n T_0}(\bm{\theta}_{0 T_0}, h_0) = o_{p}(r_n^{-1})
\]
as $n \to \infty$.
\end{itemize}
Then, it holds that 
\[
r_n(\hat{\bm{\theta}}_{n \hat{T}_n} - \bm{\theta}_{0 T_0})1_{\{\hat{T}_n = T_0\}}
\to^d -\bm{V}_{\bm{\theta}_0, h_0}Z(\bm{\theta}_{0 T_0}, h_0),\quad
n \to \infty.
\]
\end{thm}
\fi
\begin{proof}
Note that 
\[
\sqrt{n}(\tilde{\bm{\theta}}_{n \hat{T}_n} - \bm{\theta}_{0 T_0})1_{\{\hat{T}_n = T_0\}}
=\sqrt{n}(\hat{\bm{\theta}}_{n T_0} - \bm{\theta}_{0 T_0})1_{\{\hat{T}_n = T_0\}}.
\]
and that 
\[
1_{\{\hat{T}_n = T_0\}} \to^p 1,\quad n \to \infty
\]
by Theorem \ref{rate of convergence DS general model}.
It follows from Theorem 2.1 of \cite{nishiyama2009asymptotic} that 
\[
\sqrt{n}(\tilde{\bm{\theta}}_{n T_0} - \bm{\theta}_{0 T_0})
\to^d -\bm{I}(\bm{\theta}_0, h_0)^{-1}G_{T_0}(\bm{\theta}_{0 T_0}, h_0),\quad
n \to \infty.
\]
Then, using the Slutsky lemma, we obtain the conclusion. 
\end{proof}
Let $\ell^2$ be a Hilbert space defined by 
$\ell^2 :=\{\bm{x} \in \mathbb{R}^\infty : \sum_{j=1}^\infty |x_j|^2 < \infty\}$ with the inner product 
$\langle \bm{x}, \bm{y} \rangle := \sum_{j=1}^\infty x_j y_j$.
For $\bm{u} \in \ell^2$ and an finite index set $T$, 
we denote by $\bm{u}_T \in \mathbb{R}^{|T|}$ 
the sub-vector of $\bm{u}$ restricted by $T$.
Consider the $\ell^2$-valued random sequence 
$\{\bm{\mathcal{R}}_n\}_{n \in \mathbb{N}}$ defined by 
\[
\langle \bm{e}_j, \bm{\mathcal{R}}_n\rangle
= \begin{cases}
\sqrt{n} (\tilde{\bm{\theta}}_{nj} - \bm{\theta}_{0j} ) & 
1 \leq j \leq p \\
0 & j > p
\end{cases},
\] 
where $\{\bm{e}_j\}_{j \in \mathbb{N}}$ is the 
canonical basis of $\mathbb{R}^\infty$.
\begin{lem}\label{cramer-wold general}
Under the same assumptions as Lemma \ref{normality general model}, 
it holds that 
\[
\langle \bm{u}, \bm{\mathcal{R}_n} \rangle
\to^d \mathcal{N}(0, \bm{u}_{T_0}^\top \bm{I}(\bm{\theta}_{0 T_0}, h_0)^{-1} \bm{u}_{T_0}),\quad
n \to \infty,
\]
for every $\bm{u} \in \ell^2$.
\end{lem}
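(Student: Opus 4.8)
The plan is to reduce the infinite-dimensional statement to the finite-dimensional central limit theorem already established in Lemma \ref{normality general model}. Fix $\bm{u} \in \ell^2$. The key observation is that $\bm{\mathcal{R}}_n$ is supported on the first $p$ coordinates, and moreover $\tilde{\bm{\theta}}_{nj} = 0$ for $j \in \hat{T}_n^c$ while $\bm{\theta}_{0j} = 0$ for $j \in T_0^c$. Hence, on the event $\{\hat{T}_n = T_0\}$, only the coordinates in $T_0$ contribute, and since $s = |T_0|$ is fixed and independent of $n$ and $p$, the relevant part of $\bm{u}$ is the fixed finite sub-vector $\bm{u}_{T_0} \in \mathbb{R}^s$. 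Concretely, I would write
\[
\langle \bm{u}, \bm{\mathcal{R}}_n \rangle
= \sum_{j=1}^p u_j \sqrt{n}(\tilde{\bm{\theta}}_{nj} - \bm{\theta}_{0j})
= \bm{u}_{T_0}^\top \sqrt{n}(\tilde{\bm{\theta}}_{n T_0} - \bm{\theta}_{0 T_0}) 1_{\{\hat{T}_n = T_0\}}
+ R_n,
\]
where $R_n$ collects the remaining terms.

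The next step is to show $R_n \to^p 0$. On the event $\{\hat{T}_n = T_0\}$, which has probability tending to $1$ by Theorem \ref{rate of convergence DS general model}, we have $\tilde{\bm{\theta}}_{nj} - \bm{\theta}_{0j} = 0$ for all $j \notin T_0$, so $R_n = 0$ there; hence $\Prob(R_n \neq 0) \leq \Prob(\hat{T}_n \neq T_0) \to 0$, giving $R_n \to^p 0$. It then remains to identify the limit of the leading term: by Lemma \ref{normality general model},
\[
\sqrt{n}(\tilde{\bm{\theta}}_{n T_0} - \bm{\theta}_{0 T_0}) 1_{\{\hat{T}_n = T_0\}} \to^d \mathcal{N}_s(\bm{0}, \bm{I}(\bm{\theta}_{0 T_0}, h_0)^{-1}),
\]
so applying the continuous linear functional $\bm{x} \mapsto \bm{u}_{T_0}^\top \bm{x}$ (continuous mapping / the classical Cramér--Wold device in $\mathbb{R}^s$) yields
\[
\bm{u}_{T_0}^\top \sqrt{n}(\tilde{\bm{\theta}}_{n T_0} - \bm{\theta}_{0 T_0}) 1_{\{\hat{T}_n = T_0\}} \to^d \mathcal{N}(0, \bm{u}_{T_0}^\top \bm{I}(\bm{\theta}_{0 T_0}, h_0)^{-1} \bm{u}_{T_0}).
\]
A final application of Slutsky's lemma to combine this with $R_n \to^p 0$ gives the claim.

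There is essentially no hard analytic step here; the proof is a bookkeeping argument exploiting that $T_0$ is fixed and finite so that the sum defining $\langle \bm{u}, \bm{\mathcal{R}}_n\rangle$ effectively truncates. The only point requiring a little care is that $\bm{u}_{T_0}$ is genuinely a fixed vector (it does not depend on $n$ even though the ambient dimension $p$ grows), so that $\bm{u}_{T_0}^\top \bm{I}(\bm{\theta}_{0 T_0}, h_0)^{-1} \bm{u}_{T_0}$ is a well-defined finite constant; this is guaranteed by Assumption \ref{regularity sec2}(iii) (finiteness and invertibility of $\bm{I}(\bm{\theta}_{0 T_0}, h_0)$) together with $\bm{u} \in \ell^2 \Rightarrow \bm{u}_{T_0} \in \mathbb{R}^s$. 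If one wanted to be slightly more careful, one could note $|R_n| \le \big(\sum_{j \notin T_0, j \le p} u_j^2\big)^{1/2} \|\sqrt{n}(\tilde{\bm{\theta}}_n - \bm{\theta}_0)_{(T_0^c)}\|_2$, but the event argument above already suffices.
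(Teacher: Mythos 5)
Your proposal is correct and follows essentially the same route as the paper: decompose $\langle \bm{u}, \bm{\mathcal{R}}_n\rangle$ into the $T_0$-block multiplied by $1_{\{\hat{T}_n = T_0\}}$ plus a remainder, kill the remainder using the selection consistency $\Prob(\hat{T}_n \neq T_0) \to 0$, and then apply Lemma \ref{normality general model} with the continuous linear map $\bm{x} \mapsto \bm{u}_{T_0}^\top \bm{x}$ and Slutsky. The paper organizes the bookkeeping slightly differently (splitting over $\hat{T}_n$ and $\hat{T}_n^c$ before conditioning on the event), but the substance is identical.
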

\begin{proof}
Note that 
\begin{eqnarray*}
\langle \bm{u}, \bm{\mathcal{R}_n} \rangle
&=& \sqrt{n} \bm{u}_{\hat{T}_n}^\top(\tilde{\bm{\theta}}_n - \bm{\theta}_0)_{\hat{T}_n}
+ \sqrt{n} \bm{u}_{\hat{T}_n^c}^\top(\tilde{\bm{\theta}}_n - \bm{\theta}_0)_{\hat{T}_n^c}.
\end{eqnarray*}
As for the second term, we have 
\begin{eqnarray*}
\sqrt{n} \bm{u}_{\hat{T}_n^c}^\top(\tilde{\bm{\theta}}_n - \bm{\theta}_0)_{\hat{T}_n^c}
&=& - \sqrt{n} \bm{u}_{\hat{T}_n^c}^\top \bm{\theta}_{0 \hat{T}_n^c} \\
&=& - \sqrt{n} \bm{u}_{{T}_0^c}^\top \bm{\theta}_{0 \hat{T}_0^c} 1_{\{\hat{T}_n = T_0\}}
 - \sqrt{n} \bm{u}_{\hat{T}_n^c}^\top \bm{\theta}_{0 \hat{T}_n^c} 1_{\{\hat{T}_n \not= T_0\}}\\
&=& 0 - \sqrt{n} \bm{u}_{\hat{T}_n^c}^\top \bm{\theta}_{0 \hat{T}_n^c} 1_{\{\hat{T}_n \not= T_0\}}.
\end{eqnarray*}
It follows from \eqref{selection general} that 
for every $\eta >0$ and sufficiently large $n$ such that $\sqrt{n}>\eta$,
\begin{eqnarray*}
\Prob(\sqrt{n} 1_{\{\hat{T}_n \not= T_0\}} > \eta)
&=& \Prob(1_{\{\hat{T}_n \not= T_0\}} = 1) \\
&=& \Prob(\hat{T}_n \not= T_0) \to 0,\quad
n \to \infty,
\end{eqnarray*}
which implies that 
\[
\sqrt{n} \bm{u}_{\hat{T}_n^c}^\top(\tilde{\bm{\theta}}_n - \bm{\theta}_0)_{\hat{T}_n^c} = o_p(1),\quad 
n \to \infty.
\]
By the similar argument, we have 
\begin{eqnarray*}
\langle \bm{u}, \bm{\mathcal{R}_n} \rangle
&=& \sqrt{n} \bm{u}_{\hat{T}_n}^\top(\tilde{\bm{\theta}}_n - \bm{\theta}_0)_{\hat{T}_n} + o_p(1) \\
&=& \sqrt{n} \bm{u}_{T_0}^\top(\tilde{\bm{\theta}}_n - \bm{\theta}_0)_{T_0} 1_{\{\hat{T}_n = T_0\}}
+\sqrt{n} \bm{u}_{\hat{T}_n}^\top(\tilde{\bm{\theta}}_n - \bm{\theta}_0)_{\hat{T}_n} 1_{\{\hat{T}_n \not= T_0\}} + o_p(1) \\
&=& \sqrt{n} \bm{u}_{T_0}^\top(\tilde{\bm{\theta}}_n - \bm{\theta}_0)_{T_0} 1_{\{\hat{T}_n = T_0\}} + o_p(1).
\end{eqnarray*}
Then, we have the conclusion from 
Lemma \ref{normality general model}.
\end{proof}
\if0
Consider the $\ell^2$-valued random sequence 
$\{\bm{X}_n\}_{n \in \mathbb{N}}$ defined by 
\[
\langle \bm{e}_j, \bm{X}_n\rangle
= \begin{cases}
\sqrt{n} (\tilde{\bm{\theta}}_{nj} - \bm{\theta}_{0j} ) & 
1 \leq j \leq p \\
0 & j > p
\end{cases},
\] 
where $\{\bm{e}_j\}_{j \in \mathbb{N}}$ is the 
canonical basis of $\mathbb{R}^\infty$.
\fi
Then, we have the following 
limit theorem for $\{\bm{\mathcal{R}}_n\}_{n \in \mathbb{N}}$.
\begin{thm}\label{weak convergence l2 general}
Under the same assumptions as Lemma \ref{normality general model}, it holds that 
\[
\bm{\mathcal{R}}_n \to^d \bm{\mathcal{R}},\quad n \to \infty \quad
\mbox{in}\ \ell^2,
\]
where 
$\bm{\mathcal{R}}$ is a centered Gaussian random field whose distribution is determined by 
the distribution of its marginals 
\[
\langle \bm{u}, \bm{\mathcal{R}} \rangle \sim
\mathcal{N}(0, \bm{u}_{T_0}^\top\bm{I}(\bm{\theta}_0, h_0)\bm{u}_{T_0})
\]
for every $\bm{u} \in \ell^2$.
\end{thm}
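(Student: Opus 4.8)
The plan is to use the variable–selection consistency of Theorem~\ref{rate of convergence DS general model} to reduce the convergence in the infinite–dimensional Hilbert space $\ell^2$ to a finite–dimensional central limit theorem, and then to assemble the pieces by Slutsky's lemma in $\ell^2$ (which is separable, so the usual continuous–mapping and Slutsky arguments are available). Write $T_0 = \{j_1 < \cdots < j_s\}$, let $\ell^2_{T_0} := \{\bm{x} \in \ell^2 : x_j = 0 \text{ for } j \notin T_0\}$, let $P_{T_0}\colon \ell^2 \to \ell^2_{T_0}$ be the orthogonal projection, and let $\iota\colon \mathbb{R}^s \to \ell^2_{T_0}$ be the linear isometry sending the $k$-th standard basis vector to $\bm{e}_{j_k}$. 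Decompose $\bm{\mathcal{R}}_n = P_{T_0}\bm{\mathcal{R}}_n + (I - P_{T_0})\bm{\mathcal{R}}_n$.

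First I would show that the tail part $(I-P_{T_0})\bm{\mathcal{R}}_n$ is asymptotically negligible in $\ell^2$. By construction $\tilde{\bm{\theta}}_{n\hat{T}_n^c} = \bm{0}$ and $\bm{\theta}_{0T_0^c} = \bm{0}$, so on the event $\{\hat{T}_n = T_0\}$ every coordinate $\langle \bm{e}_j, \bm{\mathcal{R}}_n\rangle$ with $j \notin T_0$ vanishes, i.e.\ $(I-P_{T_0})\bm{\mathcal{R}}_n = \bm{0}$. Hence, for every $\varepsilon > 0$,
\[
\Prob\bigl(\|(I-P_{T_0})\bm{\mathcal{R}}_n\|_{\ell^2} > \varepsilon\bigr) \leq \Prob(\hat{T}_n \neq T_0) \to 0, \quad n \to \infty,
\]
by \eqref{selection general}, whose hypothesis $2c_n/\delta < \tau_n < \theta_{\min}/2$ is part of the assumptions here; thus $(I-P_{T_0})\bm{\mathcal{R}}_n \to^p \bm{0}$ in $\ell^2$.

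Next I would identify the weak limit of the finite–dimensional part. The vector $\iota^{-1}(P_{T_0}\bm{\mathcal{R}}_n) = \sqrt{n}(\tilde{\bm{\theta}}_{nT_0} - \bm{\theta}_{0T_0}) \in \mathbb{R}^s$ coincides with $\sqrt{n}(\tilde{\bm{\theta}}_{n\hat{T}_n} - \bm{\theta}_{0T_0})1_{\{\hat{T}_n = T_0\}}$ on $\{\hat{T}_n = T_0\}$, so these two sequences differ with probability at most $\Prob(\hat{T}_n \neq T_0) \to 0$ and therefore share the same weak limit, which by Lemma~\ref{normality general model} is $\mathcal{N}_s(\bm{0}, \bm{I}(\bm{\theta}_{0T_0}, h_0)^{-1})$. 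Since $\iota$ is a continuous (isometric, linear) embedding, the continuous–mapping theorem gives $P_{T_0}\bm{\mathcal{R}}_n \to^d \bm{\mathcal{R}}$ in $\ell^2$, where $\bm{\mathcal{R}} := \iota(G)$ with $G \sim \mathcal{N}_s(\bm{0}, \bm{I}(\bm{\theta}_{0T_0}, h_0)^{-1})$; being supported on the finite–dimensional subspace $\ell^2_{T_0}$, the law of $\bm{\mathcal{R}}$ is a centered Gaussian Borel probability measure on $\ell^2$.

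Finally, adding the two displays and applying Slutsky's lemma in $\ell^2$ yields $\bm{\mathcal{R}}_n \to^d \bm{\mathcal{R}}$ in $\ell^2$. The marginal law $\langle \bm{u}, \bm{\mathcal{R}}\rangle \sim \mathcal{N}(0, \bm{u}_{T_0}^\top \bm{I}(\bm{\theta}_{0T_0}, h_0)^{-1}\bm{u}_{T_0})$, $\bm{u} \in \ell^2$, then follows either by applying the continuous linear functional $\langle \bm{u}, \cdot\rangle$ to $\bm{\mathcal{R}} = \iota(G)$ or directly from Lemma~\ref{cramer-wold general}, and since a Borel probability measure on the separable Hilbert space $\ell^2$ is determined by its one–dimensional projections, this pins down the law of $\bm{\mathcal{R}}$. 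I expect the only real subtlety to be the reduction to finite dimensions — making precise that $\bm{\mathcal{R}}_n$ is, with probability tending to one, supported on the fixed finite coordinate set $T_0$ (here it is essential that $s$ is held fixed, as assumed prior to Assumption~\ref{regularity sec2}); once that is in place no genuine infinite–dimensional tightness estimate is required, and the alternative route via convergence of all finite–dimensional projections together with an asymptotic flat–concentration bound $\lim_{N}\limsup_n \Prob(\sum_{j>N}\langle\bm{e}_j,\bm{\mathcal{R}}_n\rangle^2 > \varepsilon) = 0$ can be avoided.
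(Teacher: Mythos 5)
Your proof is correct, and it follows the same basic strategy as the paper — use the selection consistency \eqref{selection general} to show that the coordinates of $\bm{\mathcal{R}}_n$ outside $T_0$ vanish with probability tending to one, then let the fixed finite-dimensional limit theorem do the rest — but the final assembly is genuinely different. The paper proves exactly your first display (that $\Prob(\|\sqrt{n}(\tilde{\bm{\theta}}_n-\bm{\theta}_0)_{T_0^c}\|_2^2>\delta)\to 0$, i.e.\ asymptotic finite-dimensionality in the sense of Section 1.8 of \cite{vanwellner1996}), combines it with the marginal convergence $\langle\bm{u},\bm{\mathcal{R}}_n\rangle\to^d\langle\bm{u},\bm{\mathcal{R}}\rangle$ from Lemma \ref{cramer-wold general}, and then invokes the abstract Theorem 1.8.4 of \cite{vanwellner1996}. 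You instead bypass both Lemma \ref{cramer-wold general} and that theorem: you embed the $\mathbb{R}^s$-limit of Lemma \ref{normality general model} into $\ell^2$ by the isometry $\iota$, use the continuous mapping theorem to get $P_{T_0}\bm{\mathcal{R}}_n\to^d\bm{\mathcal{R}}$, and conclude by Slutsky in the metric space $\ell^2$. Your route is more elementary and self-contained — it needs no asymptotic tightness/measurability machinery because both the approximating sequence (eventually) and the limit live on the fixed finite-dimensional subspace $\ell^2_{T_0}$ — while the paper's route is the generic template that would still function if one only had marginal convergence plus flat concentration rather than a bona fide finite-dimensional CLT. One further remark: your limit covariance $\bm{u}_{T_0}^\top\bm{I}(\bm{\theta}_{0T_0},h_0)^{-1}\bm{u}_{T_0}$ carries the inverse, consistently with Lemmas \ref{normality general model} and \ref{cramer-wold general}; the statement of Theorem \ref{weak convergence l2 general} as printed omits the inverse, which is evidently a typographical slip in the paper rather than an error on your side.
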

\begin{proof}
For every $\delta>0$, it holds that
\begin{eqnarray*}
\Prob\left(
\|\sqrt{n} (\tilde{\bm{\theta}}_{n}- \bm{\theta}_0)_{T_0^c}\|_2^2 > \delta
\right)
&\leq& \Prob\left(
\{n \|\tilde{\theta}_{n T_0^c}\|_2^2 > \delta 
\cap \{\hat{T}_n = T_0^c\}
\right)\\
&&+ \Prob\left(
\{n \|\tilde{\theta}_{n T_0^c}\|_2^2 > \delta 
\cap \{\hat{T}_n \neq T_0^c\}
\right)\\
&\leq& \Prob\left(
\{n \|\tilde{\theta}_{n \hat{T}_n^c}\|_2^2 > \delta 
\cap \{\hat{T}_n = T_0^c\}
\right)
+ \Prob(\hat{T}_n \neq T_0)\\
&=& 0 + o(1),\quad n \to \infty.
\end{eqnarray*}
Therefore, $\{\bm{\mathcal{R}}_n\}_{n \in \mathbb{N}}$
is asymptotically finite-dimensional; see, e.g., Section 1.8 of \cite{vanwellner1996}. 

It follows from Lemma \ref{cramer-wold general} that
for every $\bm{u} \in \ell^2$, 
\[
\langle \bm{u}, \bm{\mathcal{R}}_n \rangle
\to^d \langle \bm{u}, \bm{\mathcal{R}} \rangle,\quad
n \to \infty.
\]
Then, Theorem 1.8.4 of \cite{vanwellner1996}
deduce the conclusion.
\end{proof}
\begin{rem}
\begin{itemize}
\item[(i)]
In Theorem \ref{rate of convergence DS general model}, we establish the 
rate of convergence and the 
selection consistency of the 
Dantzig selector when $s$ is allowed to diverge 
under appropriate conditions.
On the other hand, to apply Theorem 
\ref{normality general model} directly, 
the sparsity $s$ should be fixed.
To consider the asymptotic normality in the case  where $s$ can be diverge, 
we should consider another type of 
asymptotic theory.
See, e.g., \cite{chernozhukov2021lasso} for the 
Gaussian approximation for 
time series models in such a high-dimensional regime.
\item[(ii)]
Theorem \ref{weak convergence l2 general}
is an extension of the 
asymptotic normality of the estimator 
in finite-dimensional cases.
Since the asymptotic variance achieves the 
optimal bound, the two-step 
estimator satisfies the 
optimality when the sparsity is fixed.
\end{itemize}
\end{rem}
The results in this section 
can be naturally generalized to 
various models including time series and stochastic processes.
For such models, the random maps $\psi_n^{(1)}$, 
$\Psi_n$ can be defined as a 
score functions based on (conditional) least squares method and 
quasi-likelihood method or weighted least squares methods.
Then, if $\psi_n^{(1)}$ and the corresponding 
Hessian matrix satisfy Assumption \ref{DS condition}, 
we can derive the 
rate of convergence of
the Dantzig selector type estimator 
$\hat{\theta}_n^{(1)}$ 
and the performance of the 
estimator $\hat{T}_n$ 
as well as Theorem \ref{rate of convergence DS general model}.
Moreover, using the selection consistency,  
the score function $\Psi_n$, and 
its compensator $\tilde{\Psi}_n$, 
we can derive the 
asymptotic normality of the 
two-step estimator 
as well as Theorem \ref{normality general model}
under appropriate conditions.
In subsequent sections, we discuss the 
application to the time series models and 
the models of the diffusion processes.
\if0
\begin{itemize}
\item
$\bm{\theta} \in \bm{\theta} \subset \mathbb{R}^p$: 
Unknown parameter of interest, corresponding to 
the drift, conditional mean (or regression part).
Assume that $\bm{\theta}$ is sparse, $T_0$ is the support index set of $\bm{\theta}_0$.
\item
$h \in H$: Nuisance parameter, possibly infinite-dimensional, corresponding to the volatility, 
conditional variance...
\item
Goal: construct an asymptotically normal estimator 
for $\bm{\theta}_{0 T_0}$.
\begin{itemize}
\item
Step 1: Construct the estimator $\hat{T}_n$ by the 
(LSE-based) Dantzig selector.
\item
Step 2: Apply the results from \cite{nishiyama2009asymptotic} after variable selection.
\end{itemize}
\end{itemize}
\fi
\section{Applications to time series models}\label{sec:time series}
\subsection{Models of ergodic time series with high-dimensional parameters}
Let us consider the following time series model:
\[
X_t = \tilde{S}(\bm{\alpha}^\top \tilde{\phi}(X_{t-1},\ldots,X_{t-d_1}), \bm{\beta}^\top \bm{Z}_{t-1}) + u_t,\quad
t \in \mathbb{Z},\ d_1 \geq 1,
\]
where $\tilde{S}$ is a measurable and twice continuously differentiable function,
$\bm{\alpha} \in \mathbb{R}^{p_1}$, 
$\tilde{\phi}: \mathbb{R}^{d_1} \to \mathbb{R}^{p_1}$ is a 
measurable function, 
$\bm{\beta} \in \mathbb{R}^{p_2}$, 
$\{\bm{Z}_t\}_{t \in \mathbb{Z}}$ is an 
$\mathbb{R}^{p_2}$-valued covariate process and 
$\{u_t\}_{t \in \mathbb{Z}}$ is a square integrable 
martingale difference sequence with respect to the filtration
$\{\mathcal{F}_t\}$ defined by 
\[
\mathcal{F}_t = \sigma(X_s, \bm{Z}_s; s \leq t),\quad
t \in \mathbb{Z}.
\]
We suppose that 
\[
\E[u_t^2 | \mathcal{F}_{t-1}] = \tilde{\sigma}^2(X_{t-1},\ldots,X_{t-d_2}, \bm{Z}_{t-1}; h),\quad t \in \mathbb{Z},\quad d_2 \geq 1,
\]
where $\tilde{\sigma}$ is a measurable function
and 
$h$ is a possibly infinite-dimensional 
unknown parameter.
Letting $d := d_1 \lor d_2$ and changing the domain of $\tilde{S}$,
$\tilde{\phi}$, and $\tilde{\sigma}$,
we write 
\[
X_t = S(\bm{Y}_{t-1}; \bm{\theta})
+ u_t,\quad
\E[u_t^2 | \mathcal{F}_{t-1}] = \sigma^2(\bm{Y}_{t-1}; h),
\]
where $\bm{X}_{t-1} = (X_{t-1},\ldots,X_{t-d})^\top$, 
$\bm{Y}_{t-1} = (\phi(\bm{X}_{t-1})^\top, \bm{Z}_{t-1}^\top)^\top$, 
\[
S(\bm{Y}_{t-1}; \bm{\theta}) = \tilde{S}(\bm{\alpha}^\top \tilde{\phi}(X_{t-1},\ldots,X_{t-d_1}), \bm{\beta}^\top \bm{Z}_{t-1}),
\]
and
\[ 
\sigma^2(\bm{Y}_{t-1}; h) = \tilde{\sigma}^2(X_{t-1},\ldots,X_{t-d_2}, \bm{Z}_{t-1}; h).
\]
Let $\bm{\theta}_0 = (\bm{\alpha}_0^\top, \bm{\beta}_0^\top)^\top$ be the true value of $\bm{\theta} = (\bm{\alpha}^\top, \bm{\beta}^\top)^\top$,
${\Theta} = {\Theta}_{\alpha} \times {\Theta}_{\beta} \subset \mathbb{R}^{p_1+p_2}$ 
a parameter space for $\bm{\theta}$, and $H$ a
metric space equipped with a metric $d_H$.
Put $T_{10} := \{j : {\alpha}_{0 j} \not=0\}$, 
$T_{20} := \{p_1+j : {\beta}_{0j} \not= 0\}$,
and $T_0 := T_{10} \cup T_{20}$.
Our aim is to estimate $\bm{\theta} = (\bm{\alpha}^\top, \bm{\beta}^\top)^\top$ based on the observation 
$X_{1-d},\ldots,X_n$ and $\bm{Z}_0,\ldots,\bm{Z}_n$.
\if0
under the high-dimensional and 
sparse setting, i.e., 
$p=p_1+p_2 \to \infty$ as $n \to \infty$ and number $s=s_{1} + s_{2}$ of the elements of the 
index set $T_0 = \{j : \bm{\theta}_j \not=0\}$ is relatively smaller than $n$, where $s_{1}$ and $s_{2}$ are the numbers of elements in $T_{10}$ and $T_{20}$,
respectively.
\fi
Put $p= p_1 + p_2$, 
$s=s_{1} + s_{2}$, where $s_{1}$ and $s_{2}$ are the cardinalities of $T_{10}$ and $T_{20}$,
respectively.
Consider the following score functions
\[
\psi_n^{(1)}(\bm{\theta})
= \frac{1}{n} \sum_{t=1}^n 
\frac{\partial}{\partial \bm{\theta}} S(\bm{Y}_{t-1}; \bm{\theta})
\{X_t - S(\bm{Y}_{t-1}; \bm{\theta})\}
\]
and
\[
\Psi_n(\bm{\theta}, h)
=\frac{1}{n} \sum_{t=1}^n \frac{\frac{\partial}{\partial \bm{\theta}} S(\bm{Y}_{t-1};\bm{\theta})}{\sigma^2(\bm{Y}_{t-1}; \hat{h}_n)}
\{X_t - S(\bm{Y}_{t-1};\bm{\theta})\}.
\]
We first construct the estimator $\hat{\bm{\theta}}_n^{(1)}$ for 
$\bm{\theta}$ by the following Dantzig selector type estimator:
\[
\hat{\bm{\theta}}_n^{(1)}
:= \arg \min_{\bm{\theta} \in \mathcal{C}_n} \|\bm{\theta}\|_1,
\quad
\mathcal{C}_{n} := \{\bm{\theta} \in \mathbb{R}^p : 
\|\psi_n^{(1)}(\bm{\theta})\|_\infty \leq \lambda_n\},
\]
where $\lambda_n$ is a tuning parameter.
Moreover, we define the following estimator 
$\hat{T}_n$ for $T_0$:
\[
\hat{T}_n := \{j: |\hat{{\theta}}_{nj}^{(1)}|>\tau_n\},
\]
where $\tau_n$ is a threshold.
Define the Hessian matrices
$\bm{V}_n^{(1)}(\bm{\theta})$ and $\bm{V}_n^{(2)}(\bm{\theta}, h)$
as follows:
\begin{eqnarray*}
\bm{V}_n^{(1)}(\bm{\theta})
&:=& \frac{1}{n} \sum_{t=1}^n \frac{\partial^2}{\partial \bm{\theta} \partial \bm{\theta}^\top} S(\bm{Y}_{t-1}; \bm{\theta})\left\{
X_t - S(\bm{Y}_{t-1}; \bm{\theta})
\right\} \\
&&- \frac{1}{n} \sum_{t=1}^n \frac{\partial}{\partial \bm{\theta}} S(\bm{Y}_{t-1}; \bm{\theta}) \frac{\partial}{\partial \bm{\theta}^\top} S(\bm{Y}_{t-1}; \bm{\theta}),
\end{eqnarray*}
\begin{eqnarray*}
\bm{V}_n^{(2)}(\bm{\theta}, h)
&:=& \frac{1}{n} \sum_{t=1}^n \frac{\partial^2}{\partial \bm{\theta} \partial \bm{\theta}^\top}\frac{S(\bm{Y}_{t-1}; \bm{\theta})}{\sigma^2(\bm{Y}_{t-1}; h)}\left\{
X_t - S(\bm{Y}_{t-1}; \bm{\theta})
\right\} \\
&&- \frac{1}{n} \sum_{t=1}^n  \frac{\frac{\partial}{\partial \bm{\theta}}S(\bm{Y}_{t-1}; \bm{\theta})\frac{\partial}{\partial \bm{\theta}^\top}S(\bm{Y}_{t-1}; \bm{\theta})}{\sigma^2(\bm{Y}_{t-1}; h)}.
\end{eqnarray*}
To establish the asymptotic behavior of the 
estimators $\hat{\bm{\theta}}_n^{(1)}$ and $\hat{T}_n$, we assume the following conditions.
\begin{assumption}\label{regularity time series}
\begin{itemize}
\item[(i)]
There exists a constant $K_{p, s}$
possibly depending on $p$ and $s$ such that 
\[
\E\left[
\left\|
\frac{\partial}{\partial \bm{\theta}} S(\bm{Y}_{t-1}; \bm{\theta})
\right\|_\infty^2
\sigma^2(\bm{Y}_{t-1}; h_0)
\right]\leq K_{p, s}.
\]
\if0
\item[(ii)]
It holds that 
\[
c := \inf_{\bm{x} \in \mathbb{R}^d} \inf_{h \in H} \sigma^2(\bm{x}; h) >0.
\]
There exists a integrable function $\Lambda_\sigma$ such that 
\[
|\sigma^2(\bm{x}; h_1) - \sigma^2(\bm{x}; h_2)|
\leq \Lambda_\sigma(\bm{x}) d_H(h_1, h_2)
\]
for every $\bm{x} \in \R^d$ and $h_1, h_2 \in H$.
\fi
\item[(ii)]
It holds that
\[
\log p = o(n),\quad
n \to \infty.
\]
The tuning parameter ${\lambda}_n$ satisfies that
\[
\frac{K_{p, s}}{{\lambda}_n} \sqrt{\frac{\log p}{ n}} \to 0,\quad n \to \infty.
\]
\item[(iii)]
Let $c_n := \|\psi_n^{(1)}(\hat{\bm{\theta}}_n^{(1)})-\psi_n^{(1)}(\bm{\theta}_0)\|_\infty$
and $\theta_{\min} := \inf_{j \in T_0} |\theta_{0j}|$.
Then, it holds that 
\[
\theta_{\min} > 4c_n/\delta.
\]
\if0
\item[(iv)]
There exists a constant $\delta>0$ such that 
\[
P\left(
\inf_{0 \not=v \in C_{T_0}} \inf_{\bm{\theta} \in \bm{\theta}}
 \frac{|s_n v^\top \bm{V}_n^{(1)}(\bm{\theta}) v|^{1/2}}{\|v_{T_0}\|_1}>\delta
\right) \to 1,\quad 
n \to \infty,
\]
\[
P\left(
\inf_{0 \not=v \in C_{T_0}}\inf_{\bm{\theta} \in \bm{\theta}}
 \frac{|v^\top \bm{V}_n^{(1)}(\bm{\theta}) v|}{\|v_{T_0}\|_1 \|v\|_\infty}>\delta
\right) \to 1,\quad 
n \to \infty,
\]
\[
P\left(
\inf_{0 \not=v \in C_{T_0}} \inf_{\bm{\theta} \in \bm{\theta}}
\inf_{h \in H} \frac{s_n (v^\top \bm{V}_n^{(2)}(\bm{\theta}, h) v)^{1/2}}{\|v_{T_0}\|_1}>\delta
\right) \to 1,\quad 
n \to \infty
\]
and 
\[
P\left(
\inf_{0 \not=v \in C_{T_0}}\inf_{\bm{\theta} \in \bm{\theta}}
\inf_{h \in H} \frac{v^\top \bm{V}_n^{(2)}(\bm{\theta}, h) v}{\|v_{T_0}\|_1 \|v\|_\infty}>\delta
\right) \to 1,\quad 
n \to \infty.
\]
\fi
\end{itemize}
\end{assumption}
The sufficient condition for 
$\|\psi_n^{(1)}(\bm{\theta}_0)\|_\infty$ to obtain the rate of convergence of $\hat{\bm{\theta}}_n^{(1)}$
can be verified as follows.
\begin{prop}\label{gradient time series}
Under Assumption \ref{regularity time series},
it holds that 
\[
\Prob(\|\psi_n^{(1)}(\bm{\theta}_0)\|_\infty > {\lambda}_n) \to 0,\quad
n \to \infty.
\]
\end{prop}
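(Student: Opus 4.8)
The plan is to use the fact that, at the true value, $\psi_n^{(1)}(\bm{\theta}_0)$ equals $n^{-1}$ times the terminal value of a $p$-dimensional square-integrable martingale, and to bound its $\ell_\infty$-norm by the stochastic maximal inequality of \cite{nishiyama2021martingale}. Since $X_t - S(\bm{Y}_{t-1};\bm{\theta}_0) = u_t$, we may write $\psi_n^{(1)}(\bm{\theta}_0) = n^{-1}M_n$ with $M_n := \sum_{t=1}^n \frac{\partial}{\partial\bm{\theta}}S(\bm{Y}_{t-1};\bm{\theta}_0)u_t$, and each summand is an $\mathcal{F}_t$-martingale difference because $\frac{\partial}{\partial\bm{\theta}}S(\bm{Y}_{t-1};\bm{\theta}_0)$ is $\mathcal{F}_{t-1}$-measurable while $\E[u_t\mid\mathcal{F}_{t-1}]=0$. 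Hence the coordinate processes $M_n^{(j)} = \sum_{t=1}^n\frac{\partial}{\partial\theta_j}S(\bm{Y}_{t-1};\bm{\theta}_0)u_t$ are martingales with predictable quadratic variations satisfying, uniformly in $j$,
\[
\langle M^{(j)}\rangle_n = \sum_{t=1}^n\Big(\tfrac{\partial}{\partial\theta_j}S(\bm{Y}_{t-1};\bm{\theta}_0)\Big)^2\sigma^2(\bm{Y}_{t-1};h_0)\;\le\; A_n := \sum_{t=1}^n\Big\|\tfrac{\partial}{\partial\bm{\theta}}S(\bm{Y}_{t-1};\bm{\theta}_0)\Big\|_\infty^2\sigma^2(\bm{Y}_{t-1};h_0),
\]
and, by stationarity of the model and Assumption \ref{regularity time series}(i), $\E[A_n]\le nK_{p,s}$.

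I would then apply the stochastic maximal inequality of \cite{nishiyama2021martingale} to the finite family $\{M^{(j)}\}_{j=1}^{p}$. Its purpose here is to replace the crude union bound over the $p$ coordinates, which would cost a factor $p$, by a factor that is only a power of $\log p$; this is exactly what makes the resulting estimate compatible with the standardisation imposed in Assumption \ref{regularity time series}(ii). After a routine localisation of the quadratic variation $A_n$ at a level slightly above $nK_{p,s}$ (should an almost-sure bound be required), the inequality delivers a bound on $\E[\max_{1\le j\le p}|M_n^{(j)}|]$ whose leading term is a universal constant times $\sqrt{\log p}\cdot\E[A_n]^{1/2}$, plus possibly a lower-order jump contribution; together with $\E[A_n]\le nK_{p,s}$ this gives, for $n$ large,
\[
\E\big[\,\|\psi_n^{(1)}(\bm{\theta}_0)\|_\infty\,\big]=\frac1n\,\E\Big[\max_{1\le j\le p}|M_n^{(j)}|\Big]\;\le\; C\,K_{p,s}\sqrt{\frac{\log p}{n}}.
\]
Markov's inequality then yields $\Prob\big(\|\psi_n^{(1)}(\bm{\theta}_0)\|_\infty>\lambda_n\big)\le C\,\lambda_n^{-1}K_{p,s}\sqrt{(\log p)/n}$, and the right-hand side tends to $0$ by Assumption \ref{regularity time series}(ii).

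The main obstacle is the careful invocation of the maximal inequality of \cite{nishiyama2021martingale}: one must check its measurability and integrability hypotheses for this discrete-time martingale, whose increments $\frac{\partial}{\partial\theta_j}S(\bm{Y}_{t-1};\bm{\theta}_0)u_t$ are controlled only in the second conditional moment via Assumption \ref{regularity time series}(i), so the jump term of the inequality has to be handled either by truncation or by using the version phrased purely through the predictable quadratic variation; and one must keep track of the precise powers of $\log p$ and $K_{p,s}$ so that the final rate is at most $K_{p,s}\lambda_n^{-1}\sqrt{(\log p)/n}$, the hypothesis $\log p=o(n)$ being what ensures these bounds are non-vacuous. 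By contrast, the ergodicity and stationarity of the model enter only through the elementary inequality $\E[A_n]\le nK_{p,s}$, and no restricted-eigenvalue- or Hessian-type condition plays any role at this stage.
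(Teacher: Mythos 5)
Your proposal is correct and follows essentially the same route as the paper: write $\psi_n^{(1)}(\bm{\theta}_0)=n^{-1}\sum_t \frac{\partial}{\partial\bm{\theta}}S(\bm{Y}_{t-1};\bm{\theta}_0)u_t$ as a martingale, apply the $\sqrt{\log(1+p)}$ maximal inequality of Nishiyama (Lemma \ref{maximal ineq}) with the second conditional moment controlled by $K_{p,s}$ via Assumption \ref{regularity time series}(i), and finish with Markov's inequality and Assumption \ref{regularity time series}(ii). The only superfluous element is your worry about localisation and jump terms: the discrete-time version of the inequality used in the paper is already stated purely through $\E[\sum_t\max_i(\xi_t^i)^2]$, so no truncation is needed.
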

See Subsection \ref{subsec: proof of sec3} for the proof.
Note that we can also use 
the Bernstein inequality for martingales to deduce a non-asymptotic inequality;
see, e.g., \cite{freedman1975tail} and 
for details.
On the other hand,  as the condition for $\bm{V}_n^{(1)}$ strongly depends on the 
model structure such as stationarity or tail property of the process, 
it is difficult to verify it in general settings.
Therefore, we assume the following condition.
\begin{assumption}\label{time series matrix condition}
There exists a constant $\delta>0$ such that 
\begin{equation}\label{matrix condition general time series}
\Prob\left(
\inf_{\bm{0} \not= \bm{v} \in C_{T_0}}\inf_{\bm{\theta} \in {\Theta}}
 \frac{|\bm{v}^\top \bm{V}_n^{(1)}(\bm{\theta}) \bm{v} |}{\|\bm{v} _{T_0}\|_1 \| \bm{v} \|_\infty}>\delta
\right) \to 1,\quad 
n \to \infty.
\end{equation}
\end{assumption}
Note that the left-hand side of \eqref{matrix condition general time series} may depend on 
$n$, $p$, and $s$.
We ensure this condition for
integer-valued autoregressive models with 
large order in Subsection 3.2.

Then, we obtain the following rate of convergence.
\begin{thm}\label{DS time series model thm}
Let Assumptions \ref{regularity time series} 
and \ref{time series matrix condition} hold.
Then, it holds that
\begin{equation}\label{error bound time series}
\Prob\left(\|\hat{\bm{\theta}}_n^{(1)} - \bm{\theta}_0\|_\infty > \frac{2}{\delta}c_n\right)
\to 0,\quad
n \to \infty,
\end{equation}
where $c_n := \|\psi_n^{(1)}(\hat{\bm{\theta}}_n^{(1)})-\psi_n^{(1)}(\bm{\theta}_0)\|_\infty$.
Especially, it holds that 
\[
\|\hat{\bm{\theta}}_n^{(1)} - \bm{\theta}_0\|_\infty = O_p(\lambda_n),\quad n \to \infty.
\]
Moreover, if the threshold 
$\tau_n$ satisfies that 
$2 c_n / \delta< \tau_n < \theta_{\min}/2$, 
it holds that
\begin{equation}\label{selection time series}
\Prob(\hat{T}_n = T_0) \to 1,\quad n \to \infty.
\end{equation}
\end{thm}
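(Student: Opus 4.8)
The plan is to deduce this from the general Theorem~\ref{rate of convergence DS general model} by checking Assumption~\ref{DS condition} in the present setting. Condition~(i) of Assumption~\ref{DS condition} is exactly Proposition~\ref{gradient time series}, which holds under Assumption~\ref{regularity time series}, and condition~(iii) is literally Assumption~\ref{regularity time series}(iii). So the only real work is condition~(ii): to exhibit a (possibly data dependent) matrix $\bm{V}_n^{(1)}$ satisfying the defining inequality~\eqref{psi-V} at $\bm{\theta}=\hat{\bm{\theta}}_n^{(1)}$ --- which is the only value of $\bm{\theta}$ actually used in the proof of Theorem~\ref{rate of convergence DS general model} --- with $\Prob(F_\infty(T_0,\bm{V}_n^{(1)})>\delta)\to1$.

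The new feature, compared with the linear model of Section~\ref{sec:general}, is that $\psi_n^{(1)}$ is no longer affine in $\bm{\theta}$, so there is no constant Hessian; I would handle this with the integral form of the increment. Since $\partial\psi_n^{(1)}/\partial\bm{\theta}^\top=\bm{V}_n^{(1)}(\bm{\theta})$, setting $\bm{\theta}(u):=\bm{\theta}_0+u(\hat{\bm{\theta}}_n^{(1)}-\bm{\theta}_0)$ and $\bm{v}:=\hat{\bm{\theta}}_n^{(1)}-\bm{\theta}_0$ gives
\[
\bm{v}^\top\!\left[\psi_n^{(1)}(\hat{\bm{\theta}}_n^{(1)})-\psi_n^{(1)}(\bm{\theta}_0)\right]
=\bm{v}^\top\!\left(\int_0^1\bm{V}_n^{(1)}(\bm{\theta}(u))\,du\right)\!\bm{v}
=\int_0^1\bm{v}^\top\bm{V}_n^{(1)}(\bm{\theta}(u))\,\bm{v}\,du,
\]
so one takes $\bm{V}_n^{(1)}:=\int_0^1\bm{V}_n^{(1)}(\bm{\theta}(u))\,du$. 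Exactly as in the proof of Theorem~\ref{rate of convergence DS general model}, the $\ell_1$-minimality of $\hat{\bm{\theta}}_n^{(1)}$ over $\mathcal{C}_n$ (which contains $\bm{\theta}_0$ with probability tending to one, by condition~(i)) forces $\bm{v}\in C_{T_0}$. On the event in~\eqref{matrix condition general time series}, for each $\bm{v}\in C_{T_0}\setminus\{\bm{0}\}$ the map $u\mapsto\bm{v}^\top\bm{V}_n^{(1)}(\bm{\theta}(u))\,\bm{v}$ is continuous and bounded below in absolute value by $\delta\|\bm{v}_{T_0}\|_1\|\bm{v}\|_\infty>0$; being continuous and nonvanishing it keeps a constant sign, hence
\[
\left|\int_0^1\bm{v}^\top\bm{V}_n^{(1)}(\bm{\theta}(u))\,\bm{v}\,du\right|
=\int_0^1\left|\bm{v}^\top\bm{V}_n^{(1)}(\bm{\theta}(u))\,\bm{v}\right|du
>\delta\,\|\bm{v}_{T_0}\|_1\,\|\bm{v}\|_\infty.
\]
This both verifies~\eqref{psi-V} for this $\bm{V}_n^{(1)}$ and shows $F_\infty(T_0,\bm{V}_n^{(1)})>\delta$ on that event, so condition~(ii) holds.

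With Assumption~\ref{DS condition} verified, \eqref{error bound time series} follows from Theorem~\ref{rate of convergence DS general model}; the bound $c_n\le2\lambda_n$ (triangle inequality together with $\hat{\bm{\theta}}_n^{(1)},\bm{\theta}_0\in\mathcal{C}_n$) upgrades this to $\|\hat{\bm{\theta}}_n^{(1)}-\bm{\theta}_0\|_\infty=O_p(\lambda_n)$, and when $2c_n/\delta<\tau_n<\theta_{\min}/2$ the same theorem gives the selection consistency~\eqref{selection time series}. I expect the main obstacle to be the bookkeeping around the nonlinearity of $\psi_n^{(1)}$: controlling the averaged Hessian uniformly in $\bm{v}\in C_{T_0}$ through the no-sign-change argument, and ensuring the interpolating segment $\{\bm{\theta}(u):u\in[0,1]\}$ stays in the region over which Assumption~\ref{time series matrix condition} is formulated (automatic if $\Theta$ is convex and $\hat{\bm{\theta}}_n^{(1)}$ is constrained to $\Theta$, and otherwise handled by a preliminary localization near $\bm{\theta}_0$).
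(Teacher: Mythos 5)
Your proposal is correct and follows exactly the route the paper intends: the paper states Theorem~\ref{DS time series model thm} without proof, leaving it as a direct application of Theorem~\ref{rate of convergence DS general model} with condition~(i) supplied by Proposition~\ref{gradient time series}, condition~(ii) by Assumption~\ref{time series matrix condition}, and condition~(iii) by Assumption~\ref{regularity time series}(iii). Your integral representation $\psi_n^{(1)}(\hat{\bm{\theta}}_n^{(1)})-\psi_n^{(1)}(\bm{\theta}_0)=\bigl(\int_0^1\bm{V}_n^{(1)}(\bm{\theta}_0+u\bm{v})\,du\bigr)\bm{v}$ together with the no-sign-change argument is precisely the detail needed to reconcile the $\bm{\theta}$-dependent Hessian of Assumption~\ref{time series matrix condition} (note its infimum over $\bm{\theta}\in\Theta$) with the fixed matrix required in \eqref{psi-V}, and the caveats you flag (convexity of $\Theta$ so the segment stays in the domain, and $\bm{\theta}_0\in\mathcal{C}_n$ with probability tending to one) are the right ones.
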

For the second step, we construct a consistent 
estimator for $h$ under the following condition:
\begin{assumption}\label{sparsity variance time series}
The function $\sigma^2$ satisfies that 
\[
\sigma^2(\bm{y}; h)
= \sigma^2(\bm{y}_{(T_0)}; h),\quad
\forall \bm{y} \in \mathbb{R}^p,\ h \in H.
\]
\end{assumption}
This condition means that
$\sigma^2(\bm{Y}_{t-1}; h)$ depends only
on $\phi(\bm{X}_{t-1 (T_{10})})$ and $\bm{Z}_{t-1 T_{20}}$ for every $t$.
For example, if $\bm{h}$ is a 
low-dimensional parameter, we can construct an estimator as  
a minimizer of the following function;
\begin{eqnarray}\label{estimate h time series}
\mathcal{B}_n(h)
&=& \frac{1}{n} \sum_{t=1}^n \left|
|X_t - S(\bm{Y}_{t-1}; \hat{\bm{\theta}}_n^{(1)})|^2
- \sigma^2(\bm{Y}_{t-1}; h)
\right|^2.
\end{eqnarray}
For the case where $h$ is a high-dimensional or 
infinite-dimensional parameter, 
we can consider the estimation methods 
as well as Examples \ref{ex2 variance} and \ref{ex3 variance}.
Hereafter, assume that $\hat{h}_n$ is a consistent 
estimator for $h$.

Finally, using 
$\hat{T}_n$ and $\hat{h}_n$, we can construct 
estimators $\tilde{\bm{\theta}}_n^{(j)},\ j=1,2$ 
as solutions to 
the following estimating equations:
\[
\psi_{n \hat{T}_n}^{(1)}(\bm{\theta}_{\hat{T}_n}) = \bm{0},\quad 
\tilde{\bm{\theta}}_{n \hat{T}_n^c} = \bm{0}
\]
and 
\[
\Psi_{n \hat{T}_n}(\bm{\theta}_{\hat{T}_n}, \hat{h}_n) = \bm{0},\quad 
\tilde{\bm{\theta}}_{n \hat{T}_n^c} = \bm{0},
\]
respectively, where for every $T = T_1 \cup T_2$,
\if0
\[
\psi_{nT}^{(1)}(\bm{\theta}_T)
= \frac{1}{n} \sum_{t=1}^n \frac{\partial}{\partial \bm{\theta}_T} S(\bm{\alpha}_{T_1}^\top \phi(\bm{X}_{t-1})_{T_1}, \bm{\beta}_{T_2}^\top \bm{Z}_{t-1, T_2}),
\]
\fi
\[
\psi_{nT}^{(1)}(\bm{\theta}_T)
= \frac{1}{n} \sum_{t=1}^n \frac{\partial}{\partial \bm{\theta}_T} 
S(\bm{Y}_{t-1(T)};\bm{\theta}_{(T)}),
\]
\[
\Psi_{nT}(\bm{\theta}_T, \hat{h}_n)
 = \frac{1}{n} \sum_{t=1}^n \frac{\frac{\partial}{\partial \bm{\theta}_T} S(\bm{Y}_{t-1(T)};\bm{\theta}_{(T)})}{\sigma^2(\bm{Y}_{t-1 (T)}; \hat{h}_n)}
\{X_t - S(\bm{Y}_{t-1(T)};\bm{\theta}_{(T)})\}.
\]
To establish the asymptotic normality of 
$\tilde{\bm{\theta}}_n^{(k)}, k=1, 2$, we assume the 
following conditions which correspond to 
conditions B1--B6 of \cite{nishiyama2009asymptotic}.
\begin{assumption}\label{time series regularity conditions 2step}
\begin{itemize}
\item[(i)]
There exists a constant $c>0$ such that 
\[
\inf_{\bm{y}_{T_0} \in \mathbb{R}^s, h \in H} \sigma^2(\bm{y}_{(T_0)}; h) > c.
\]
\item[(ii)]
The sparsity $s$ is independent of $n$ and $p$.
The parameter space ${\Theta}_{T_0} \subset \mathbb{R}^s$ is compact and the true value $\bm{\theta}_{0 T_0}$ is an interior point of ${\Theta}_{T_0}$.
There exists a measurable function 
$\Lambda_1$ on $\mathbb{R}^p$ such that 
\if0
\begin{eqnarray*}
\lefteqn{
S(\bm{\alpha}_{T_{10}}^\top \phi(\bm{x})_{T_{10}}, \bm{\beta}_{T_{20}}^\top \bm{z}_{T_{20}}) - 
S(\bm{\alpha}_{0 T_{10}}^\top \phi(\bm{x})_{T_{10}}, \bm{\beta}_{0 T_{20}}^\top \bm{z}_{T_{20}})}\\
&=& \frac{\partial}{\partial \bm{\theta}_{T_0}}S(\bm{\alpha}_{0 T_{10}}^\top \phi(x)_{T_{10}}, \bm{\beta}_{0 T_{20}}^\top \bm{z}_{20})
(\bm{\theta}_{T_0} - \bm{\theta}_{0 T_0})
+ \Lambda_1(\tilde{\bm{y}})\epsilon(\bm{y}_{T_0}; \bm{\theta}_{T_0}, \bm{\theta}_{0 T_0})
\end{eqnarray*}
\fi
\begin{eqnarray*}
\lefteqn{
S(\bm{y}_{(T_0)};\bm{\theta}_{(T_0)}) - 
S(\bm{y}_{(T_0)};\bm{\theta}_{0(T_0)})}\\
&=& \frac{\partial}{\partial \bm{\theta}_{T_0}}S(\bm{y}_{(T_0)};\bm{\theta}_{0(T_0)})
(\bm{\theta}_{T_0} - \bm{\theta}_{0 T_0})
+ \Lambda_1(\bm{y}_{(T_0)})\epsilon(\bm{y}_{T_0}; \bm{\theta}_{T_0}, \bm{\theta}_{0 T_0})
\end{eqnarray*}
for every $\bm{x} \in \mathbb{R}^d, \bm{z} \in \mathbb{R}^{p_2}$ and $\bm{\theta} \in {\Theta}$ where $\epsilon$ is a 
measurable function satisfying that 
\[
\sup_{\bm{x} \in \mathbb{R}^d, \bm{z} \in \mathbb{R}^{p_2}}
|\epsilon(\bm{y}_{T_0}; \bm{\theta}_{T_0}, \bm{\theta}_{0 T_0})|
= o(\|\bm{\theta}_{T_0}-\bm{\theta}_{0 T_0}\|_2),\quad
\bm{\theta}_{T_0} \to \bm{\theta}_{0 T_0}.
\]
\item[(ii)]
There exists a measurable function $\Lambda_2$ such that 
\[
\sup_{\bm{\theta} \in \bm{\theta}_{T_0}}|S(\bm{y}_{(T_0)};\bm{\theta}_{(T_0)})|
\leq \Lambda_2(\bm{y}_{T_0});
\]
\[
\sup_{\bm{\theta} \in \bm{\theta}_{T_0}}\left\|\frac{\partial}{\partial \bm{\theta}_{T_0}}S(\bm{y}_{(T_0)};\bm{\theta}_{(T_0)})\right\|
\leq \Lambda_2(\bm{y}_{T_0});
\]
\[
\sigma^2(\bm{y}_{(T_0)}; h_0)
\leq \Lambda_2(\bm{y}_{T_0});
\]
\begin{eqnarray*}
\lefteqn{
\left\|\frac{\partial}{\partial \bm{\theta}_{T_0}}S(\bm{y}_{(T_0)}; \bm{\theta}_{1(T_0)})
- \frac{\partial}{\partial \bm{\theta}_{T_0}}S(\bm{y}_{(T_0)}; \bm{\theta}_{2(T_0)})\right\|_2}\\
&\leq& \Lambda_2(\bm{y}_{T_0})\|\bm{\theta}_{1 T_0} - \bm{\theta}_{2 T_0}\|_2,\quad
\forall \bm{\theta}_{1 T_0}, \bm{\theta}_{2 T_0} \in \bm{\theta}_{T_0};
\end{eqnarray*}
\[
|\sigma^2(\bm{y}_{(T_0)}; h_1)-\sigma^2(\bm{y}_{(T_0)}; h_2)|
\leq \Lambda_2(\bm{y}_{T_0})d_H(h_1, h_2),\quad
\forall h_1, h_2 \in H ;
\]
for every $\bm{x} \in \mathbb{R}^d, \bm{z} \in \mathbb{R}^{s_2}$
\if0
\item[(iii)]
It holds that 
\[
\sup_{t \geq 0}
E\left[
|\Lambda_i(X_t, \bm{Z}_{t T_{20}})|^8 + |X_t|^4
\right] < \infty,\quad i = 1, 2.
\]
\item[(i)]
The sparsity $s_n$ is independent of $n$ and $p$.
In the sequel, we write $s_{1n}=s_1$, $s_{2n}=s_2$ and $s_n = s$.
\fi
\item[(iii)]
The process $\{\bm{Y}_{t T_0}\}_{t \geq 0}$
is ergodic under the true value $(\bm{\theta}_0, h_0)$
with an invariant measure $\mu_{s}$ such that 
for every $\mu_{s}$-integrable function $g$, 
it holds that 
\[
\frac{1}{n}\sum_{t=1}^n g(\bm{Y}_{t T_{0}})
\to^p \int_{\mathbb{R}^{s}}
g(\bm{y}_{T_0}) \mu_{s}(\dr \bm{y}_{T_0}),
\]
as $n \to \infty$.
We also assume that 
\[
\int_{\mathbb{R}^{s}}
|\Lambda_i(\bm{y}_{T_0})|^5 \mu_{s}(\bm{y}_{T_0}) < \infty,
\quad
\int_{\mathbb{R}} |x_0|^4 \mu_1( \dr x_0) < \infty.
\]
\if0
\[
\int_{\mathbb{R}^{d+1}\times \mathbb{R}^{s_2}}
\left|
|x_0| + \Lambda_i(x,z)\right|^4 \mu_{d+1+s_2}(dx_0 dx dz) < \infty.
\]
\fi
\item[(iv)]
The following $s \times s$ matrices 
$\bm{\mathcal{I}}^{(1)}(\bm{\theta}_0, h_0)$, 
$\bm{\mathcal{J}}^{(1)}(\bm{\theta}_0)$ and 
$\bm{\mathcal{I}}^{(2)}(\bm{\theta}_0, h_0)$
are positive definite:
\begin{eqnarray*}
\bm{\mathcal{I}}^{(1)}(\bm{\theta}_0, h_0)
&=& \int_{\mathbb{R}^{s}}
\frac{\partial}{\partial \bm{\theta}_{T_0}}S(\bm{y}_{(T_0)}; \bm{\theta}_{0(T_0)})
\frac{\partial}{\partial \bm{\theta}_{T_0}^\top}S(\bm{y}_{(T_0)}; \bm{\theta}_{0(T_0)})\sigma^2(\bm{y}_{(T_0)}; h_0)
 \mu_{s}( \dr \bm{y}_{T_0}),
\end{eqnarray*}
\[
\bm{\mathcal{J}}^{(1)}(\bm{\theta}_0)
= \int_{\mathbb{R}^{s}}
\frac{\partial}{\partial \bm{\theta}_{T_0}}S(\bm{y}_{(T_0)}; \bm{\theta}_{0(T_0)})
\frac{\partial}{\partial \bm{\theta}_{T_0}^\top} S(\bm{y}_{(T_0)}; \bm{\theta}_{0(T_0)})
 \mu_{s}(\dr \bm{y}_{T_0})
\]
and
\begin{eqnarray*}
\bm{\mathcal{I}}^{(2)}(\bm{\theta}_0, h_0)
&=& \int_{\mathbb{R}^{s}}
\frac{\frac{\partial}{\partial \bm{\theta}_{T_0}}S(\bm{y}_{(T_0)}; \bm{\theta}_{0(T_0)}) \frac{\partial}{\partial \bm{\theta}_{T_0}^\top} S(\bm{y}_{(T_0)}; \bm{\theta}_{0(T_0)})}{\sigma^2(\bm{y}_{(T_0)}; h_0)} \mu_{s}(\dr \bm{y}_{T_0}).
\end{eqnarray*}
\item[(v)]
For every $\epsilon>0$,
\begin{eqnarray*}
\inf_{\bm{\theta}_{T_0}: \|\bm{\theta}_{T_0}-\bm{\theta}_{0 T_0}\|_2>\epsilon}
\left\|
\int_{\mathbb{R}^{s}}
\frac{\frac{\partial}{\partial \bm{\theta}_{T_0}}S(\bm{y}_{(T_0)}; \bm{\theta}_{0(T_0)})}{\sigma^2(\bm{y}_{(T_0)}; h_0)}\left[
S(\bm{y}_{(T_0)}; \bm{\theta}_{(T_0)})
-S(\bm{y}_{(T_0)}; \bm{\theta}_{0(T_0)})
\right]\mu_{s}(\dr \bm{y}_{T_0})
\right\|_2
> 0.
\end{eqnarray*}
\item[(vi)]
The metric entropy condition for $(H, d_H)$ is 
satisfied:
\[
\int_0^1 \sqrt{\log N(H, d_H, \epsilon)} \dr \epsilon 
< \infty.
\]
\item[(vii)]
The estimator $\hat{h}_n$ is consistent estimator of 
$h_0$, i.e., 
$d_H(\hat{h}_n, h_0) = o_p(1)$ as $n \to \infty$.
\end{itemize}
\end{assumption}
Some discussions about the 
metric entropy condition (vi) and examples 
satisfying such conditions are provided in, e.g.,  
\cite{nishiyama2009asymptotic,vanwellner1996}.
Sufficient conditions for the consistency of 
$\hat{h}_n$ is described in 
Theorem 5.4 of \cite{nishiyama2009asymptotic} when 
$\hat{h}_n$ is given as a solution to $\mathcal{B}_n(h)=0$, where $\mathcal{B}_n(h)$ is defined in 
\eqref{estimate h time series}.
Under above assumptions, we can apply Theorem \ref{normality general model} to establish the asymptotic normality of 
$\tilde{\bm{\theta}}_n^{(k)}, k=1, 2$ as follows.
\begin{lem}\label{normality time series}
Suppose that Assumptions \ref{regularity time series} ,\ref{time series matrix condition}, 
\ref{sparsity variance time series},
and \ref{time series regularity conditions 2step} hold.
Assume moreover that
$2c_n/\delta <\tau_n < \theta_{\min}/2$.
Then, it holds that  
\[
\sqrt{n} \left(\tilde{\bm{\theta}}_{n \hat{T}_n}^{(k)} - \bm{\theta}_{0 T_0}\right) 1_{\left\{\hat{T}_n = T_0\right\}}
\to^d \mathcal{N}_s(0, \bm{\Sigma}^{(k)}),\quad
n \to \infty
\]
for $k=1, 2$,
where 
\[
\bm{\Sigma}^{(1)}
= \bm{\mathcal{J}}^{(1)}(\bm{\theta}_0)^{-1} \bm{\mathcal{I}}^{(1)}(\bm{\theta}_0, h_0) \bm{\mathcal{J}}^{(1)}(\bm{\theta}_0)^{-1}
\]
and 
\[
\bm{\Sigma}^{(2)}
= \bm{\mathcal{I}}^{(2)}(\bm{\theta}_0, h_0)^{-1}.
\]
\end{lem}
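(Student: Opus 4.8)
The plan is to reduce the assertion, for each $k\in\{1,2\}$, to an application of Theorem~\ref{normality general model} (equivalently, Theorem~2.1 of \cite{nishiyama2009asymptotic}) after conditioning on the variable-selection event. First I would invoke Theorem~\ref{DS time series model thm}: under Assumptions~\ref{regularity time series} and \ref{time series matrix condition} together with $2c_n/\delta<\tau_n<\theta_{\min}/2$, we have $\Prob(\hat T_n=T_0)\to1$, hence $1_{\{\hat T_n=T_0\}}\to^p1$. On the event $\{\hat T_n=T_0\}$ the estimating equations defining $\tilde{\bm\theta}_n^{(k)}$ are exactly the ones indexed by $T_0$, so $\tilde{\bm\theta}_{n\hat T_n}^{(k)}=\tilde{\bm\theta}_{nT_0}^{(k)}$ and $\tilde{\bm\theta}_{n\hat T_n^c}^{(k)}=\bm0=\bm\theta_{0T_0^c}$. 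It therefore suffices to prove $\sqrt n(\tilde{\bm\theta}_{nT_0}^{(k)}-\bm\theta_{0T_0})\to^d\mathcal N_s(\bm0,\bm\Sigma^{(k)})$; multiplying by $1_{\{\hat T_n=T_0\}}$ and applying Slutsky's lemma then gives the stated conclusion.

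For $k=2$ I would check the hypotheses of Theorem~\ref{normality general model} with rate $r_n=\sqrt n$, score $\Psi_{nT_0}$, compensator $\tilde\Psi_{nT_0}$, limiting derivative $-\bm{\mathcal I}^{(2)}(\bm\theta_0,h_0)$ (invertible by Assumption~\ref{time series regularity conditions 2step}(iv)), and Gaussian limit the field $G_{T_0}$ as in Lemma~\ref{jain-marcus}. The weak convergence $\sqrt n(\Psi_{nT_0}-\tilde\Psi_{nT_0})\to^d G_{T_0}$ in $C_\rho(\Theta_{T_0}\times H)$ --- the difference being the martingale $\frac1{\sqrt n}\sum_{t=1}^n \frac{\frac{\partial}{\partial\bm\theta_{T_0}}S(\bm Y_{t-1(T_0)};\bm\theta_{0(T_0)})\,u_t}{\sigma^2(\bm Y_{t-1(T_0)};h)}$ indexed by $(\bm\theta_{T_0},h)$ --- follows from a Jain--Marcus type argument as in Lemma~\ref{jain-marcus}(i): finite-dimensional convergence by the martingale central limit theorem, and tightness by the maximal inequality of \cite{nishiyama2021martingale} (or Proposition~4.5 of \cite{nishiyama2000weak}), using the metric-entropy condition (vi), the envelope and Lipschitz bounds (ii)--(iii), and the lower bound (i) of Assumption~\ref{time series regularity conditions 2step}. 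The expansion $\tilde\Psi_{nT_0}(\bm\theta_{nT_0},h_n)-\tilde\Psi_{nT_0}(\bm\theta_{0T_0},h_0)=-\bm{\mathcal I}^{(2)}(\bm\theta_0,h_0)(\bm\theta_{nT_0}-\bm\theta_{0T_0})+o_p(\|\bm\theta_{nT_0}-\bm\theta_{0T_0}\|_2)$ along any sequence with $\rho((\bm\theta_{nT_0},h_n),(\bm\theta_{0T_0},h_0))=o_p(1)$ is obtained exactly as in Lemma~\ref{jain-marcus}(ii), now from the ergodic theorem of Assumption~\ref{time series regularity conditions 2step}(iii) together with the Lipschitz continuity of $\sigma^2$ in $h$ and the smoothness of $S$ in $\bm\theta$. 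Consistency $\|\tilde{\bm\theta}_{nT_0}^{(2)}-\bm\theta_{0T_0}\|_2=o_p(1)$ follows from the uniform convergence above and the identifiability condition (v); moreover $\tilde\Psi_{nT_0}(\bm\theta_{0T_0},h_0)\equiv\bm0$, $\Psi_{nT_0}(\tilde{\bm\theta}_{nT_0}^{(2)},\hat h_n)=\bm0$ by construction, and $d_H(\hat h_n,h_0)=o_p(1)$ by (vii). Theorem~\ref{normality general model} then yields $\sqrt n(\tilde{\bm\theta}_{nT_0}^{(2)}-\bm\theta_{0T_0})\to^d -\bm{\mathcal I}^{(2)}(\bm\theta_0,h_0)^{-1}G_{T_0}(\bm\theta_{0T_0},h_0)$, and since the covariance of $G_{T_0}(\bm\theta_{0T_0},h_0)$ equals $\bm{\mathcal I}^{(2)}(\bm\theta_0,h_0)$ the limiting covariance collapses to $\bm{\mathcal I}^{(2)}(\bm\theta_0,h_0)^{-1}=\bm\Sigma^{(2)}$.

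For $k=1$ I would run the same argument with the weight $\sigma^{-2}$ replaced by $1$, so that $h$ never enters: the martingale central limit theorem gives $\sqrt n\,\psi_{nT_0}^{(1)}(\bm\theta_{0T_0})\to^d\mathcal N_s(\bm0,\bm{\mathcal I}^{(1)}(\bm\theta_0,h_0))$ --- the conditional variance $\sigma^2(\bm Y_{t-1};h_0)$ of $u_t$ reappearing in the limit covariance --- the ergodic theorem gives the restriction $\bm V_{nT_0}^{(1)}(\bm\theta)\to^p-\bm{\mathcal J}^{(1)}(\bm\theta_0)$ locally uniformly, and consistency again follows from (v). A one-term Taylor expansion of $\psi_{nT_0}^{(1)}$ around $\bm\theta_{0T_0}$ then gives $\sqrt n(\tilde{\bm\theta}_{nT_0}^{(1)}-\bm\theta_{0T_0})=\bm{\mathcal J}^{(1)}(\bm\theta_0)^{-1}\sqrt n\,\psi_{nT_0}^{(1)}(\bm\theta_{0T_0})+o_p(1)\to^d\mathcal N_s(\bm0,\bm\Sigma^{(1)})$ with $\bm\Sigma^{(1)}=\bm{\mathcal J}^{(1)}(\bm\theta_0)^{-1}\bm{\mathcal I}^{(1)}(\bm\theta_0,h_0)\bm{\mathcal J}^{(1)}(\bm\theta_0)^{-1}$.

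The hard part will be the functional weak-convergence step for $\sqrt n(\Psi_{nT_0}-\tilde\Psi_{nT_0})$ over the possibly infinite-dimensional index set $\Theta_{T_0}\times H$ in the martingale-dependent setting, and the uniform control of the plug-in $\hat h_n$; this is precisely where the maximal inequality of \cite{nishiyama2021martingale}, the metric-entropy condition (vi), and the fourth- and fifth-moment envelope conditions of Assumption~\ref{time series regularity conditions 2step} are needed. The remaining ingredients --- the selection-consistency reduction, the ergodic-theorem identification of the derivative and of the limiting covariances, and the Slutsky steps with $1_{\{\hat T_n=T_0\}}$ --- are routine.
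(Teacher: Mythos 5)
Your proposal is correct and follows essentially the same route as the paper: the paper likewise reduces to the fixed-support estimator via the selection consistency \eqref{selection time series} and Slutsky, and obtains $\sqrt n(\tilde{\bm\theta}_{nT_0}^{(k)}-\bm\theta_{0T_0})\to^d\mathcal N_s(\bm 0,\bm\Sigma^{(k)})$ by citing Lemmas 5.1, 5.2 and Theorem 5.3 of \cite{nishiyama2009asymptotic}, which carry out for ergodic time series exactly the verification of the $Z$-estimation hypotheses (functional CLT for $\sqrt n(\Psi_{nT_0}-\tilde\Psi_{nT_0})$, compensator expansion, consistency) that you sketch explicitly.
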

\begin{proof}
It follows from Lemmas 5.1, 5.2, and Theorem 5.3 
of \cite{nishiyama2009asymptotic} that 
\[
\sqrt{n} \left(\tilde{\bm{\theta}}_{n {T_0}}^{(k)} - \bm{\theta}_{0 T_0}\right) \to^d \mathcal{N}_s(0, \bm{\Sigma}^{(k)}),\quad
n \to \infty.
\]
Combining this fact with \eqref{selection time series},  we obtain the conclusion as well as 
Theorem \ref{normality general model}.
\end{proof}
\begin{rem}
We can show that 
$
\bm{\Sigma}^{(1)} - \bm{\Sigma}^{(2)}
$
is non-negative definite,
which implies $\tilde{\bm{\theta}}_n^{(2)}$ is asymptotically
more efficient than $\tilde{\bm{\theta}}_n^{(1)}$.
\end{rem}
The weak convergence of the proposed
estimator in  the Hilbert space $\ell^2$ 
can be shown as well as Section \ref{sec:general}.
Consider the $\ell^2$-valued random sequence 
$\{\bm{\mathcal{R}}_n^{(k)}\}_{n \in \mathbb{N}}$
for $k=1, 2$ given by 
\[
\langle \bm{e}_j, \bm{\mathcal{R}}_n^{(k)}\rangle
= \begin{cases}
\sqrt{n} (\tilde{\bm{\theta}}_{nj}^{(k)} - \bm{\theta}_{0j} ) & 
1 \leq j \leq p \\
0 & j > p
\end{cases}.
\] 
Recall that $\{\bm{e}_j\}_{j \in \mathbb{N}}$ is the 
canonical basis of $\mathbb{R}^\infty$.
\begin{lem}\label{cramer-wold time series}
Under the same assumptions as Lemma \ref{normality time series}, 
it holds that 
\[
\langle \bm{u}, \bm{\mathcal{R}_n}^{(k)} \rangle
\to^d \mathcal{N}(0, \bm{u}_{T_0}^\top 
\bm{\Sigma}^{(k)}\bm{u}_{T_0}),\quad
n \to \infty,
\]
for every $\bm{u} \in \ell^2$ and 
$k=1, 2$.
\end{lem}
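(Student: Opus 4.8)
The plan is to follow verbatim the pattern of the proof of Lemma \ref{cramer-wold general}, using the selection consistency \eqref{selection time series} from Theorem \ref{DS time series model thm} to collapse the infinite-dimensional inner product onto the fixed $s$-dimensional statistic whose limit is supplied by Lemma \ref{normality time series}. Fix $\bm{u} \in \ell^2$ and $k \in \{1, 2\}$; note that since $s$ is independent of $n$ and $p$ by Assumption \ref{time series regularity conditions 2step}(ii), $\bm{u}_{T_0} \in \mathbb{R}^s$ is well defined with $\|\bm{u}_{T_0}\|_2 < \infty$. By the definition of $\bm{\mathcal{R}}_n^{(k)}$,
\[
\langle \bm{u}, \bm{\mathcal{R}}_n^{(k)} \rangle
= \sqrt{n}\, \bm{u}_{\hat{T}_n}^\top (\tilde{\bm{\theta}}_n^{(k)} - \bm{\theta}_0)_{\hat{T}_n}
+ \sqrt{n}\, \bm{u}_{\hat{T}_n^c}^\top (\tilde{\bm{\theta}}_n^{(k)} - \bm{\theta}_0)_{\hat{T}_n^c}.
\]
First I would dispose of the second summand. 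Since $\tilde{\bm{\theta}}_{n \hat{T}_n^c}^{(k)} = \bm{0}$ by construction it equals $-\sqrt{n}\, \bm{u}_{\hat{T}_n^c}^\top \bm{\theta}_{0 \hat{T}_n^c}$; splitting on $\{\hat{T}_n = T_0\}$ and its complement and using $\bm{\theta}_{0 T_0^c} = \bm{0}$, it reduces to $-\sqrt{n}\, \bm{u}_{\hat{T}_n^c}^\top \bm{\theta}_{0 \hat{T}_n^c} 1_{\{\hat{T}_n \neq T_0\}}$. For every $\eta > 0$ this is nonzero only on $\{\hat{T}_n \neq T_0\}$, so $\Prob(|{-\sqrt{n}\, \bm{u}_{\hat{T}_n^c}^\top \bm{\theta}_{0 \hat{T}_n^c} 1_{\{\hat{T}_n \neq T_0\}}}| > \eta) \le \Prob(\hat{T}_n \neq T_0) \to 0$ by \eqref{selection time series}, whence the second summand is $o_p(1)$.

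The same device handles the first summand. Decompose
\[
\sqrt{n}\, \bm{u}_{\hat{T}_n}^\top (\tilde{\bm{\theta}}_n^{(k)} - \bm{\theta}_0)_{\hat{T}_n}
= \sqrt{n}\, \bm{u}_{T_0}^\top (\tilde{\bm{\theta}}_n^{(k)} - \bm{\theta}_0)_{T_0} 1_{\{\hat{T}_n = T_0\}}
+ \sqrt{n}\, \bm{u}_{\hat{T}_n}^\top (\tilde{\bm{\theta}}_n^{(k)} - \bm{\theta}_0)_{\hat{T}_n} 1_{\{\hat{T}_n \neq T_0\}},
\]
and the last term, being supported on $\{\hat{T}_n \neq T_0\}$, is $o_p(1)$ by exactly the same probability bound. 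Hence
\[
\langle \bm{u}, \bm{\mathcal{R}}_n^{(k)} \rangle
= \bm{u}_{T_0}^\top \left[ \sqrt{n} (\tilde{\bm{\theta}}_{n \hat{T}_n}^{(k)} - \bm{\theta}_{0 T_0}) 1_{\{\hat{T}_n = T_0\}} \right] + o_p(1).
\]
By Lemma \ref{normality time series} the bracketed vector converges in distribution to $\mathcal{N}_s(\bm{0}, \bm{\Sigma}^{(k)})$; applying the continuous mapping theorem to the linear map $\bm{v} \mapsto \bm{u}_{T_0}^\top \bm{v}$ on $\mathbb{R}^s$ and then Slutsky's lemma gives $\langle \bm{u}, \bm{\mathcal{R}}_n^{(k)} \rangle \to^d \mathcal{N}(0, \bm{u}_{T_0}^\top \bm{\Sigma}^{(k)} \bm{u}_{T_0})$ (a point mass at $0$ when $\bm{u}_{T_0} = \bm{0}$), which is the assertion for $k = 1, 2$.

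The argument is structurally identical to that of Lemma \ref{cramer-wold general}, so there is no genuine obstacle beyond bookkeeping. The one point deserving care is that the (possibly unbounded) factor $\sqrt{n}$ multiplying the quantities living on $\{\hat{T}_n \neq T_0\}$ is harmless precisely because those quantities vanish off an event of probability $o(1)$; in particular one never needs tightness of $\sqrt{n}(\tilde{\bm{\theta}}_n^{(k)} - \bm{\theta}_0)_{\hat{T}_n}$ on the bad event. One should also just note that $\bm{u}_{\hat{T}_n}$ and $\bm{u}_{\hat{T}_n^c}$ are legitimate random sub-vectors of the fixed element $\bm{u} \in \ell^2$, which is immediate.
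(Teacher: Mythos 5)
Your proposal is correct and follows exactly the route the paper intends: it transplants the proof of Lemma \ref{cramer-wold general} to the time-series setting, using the selection consistency \eqref{selection time series} to discard the contributions on $\{\hat{T}_n \neq T_0\}$ and then invoking Lemma \ref{normality time series} with the continuous mapping theorem and Slutsky's lemma. The paper omits this proof precisely because it is the same argument, so there is nothing to add.
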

\begin{thm}\label{weak convergence l2 time series}
Under the same assumptions as Lemma \ref{normality time series}, it holds that 
\[
\bm{\mathcal{R}}_n^{(k)} \to^d \bm{\mathcal{R}}^{(k)},\quad n \to \infty \quad
\mbox{in}\ \ell^2,
\]
where 
$\bm{\mathcal{R}}^{(k)}$ is a centered Gaussian random field whose 
distribution is determined by 
the distribution of its marginals 
\[
\langle \bm{u}, \bm{\mathcal{R}}^{(k)} \rangle \sim
\mathcal{N}(0, \bm{u}_{T_0}^\top\bm{\Sigma}^{(k)}\bm{u}_{T_0})
\]
for every $\bm{u} \in \ell^2$ and $k=1, 2$.
\end{thm}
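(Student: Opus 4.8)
The plan is to follow the same route as in the proof of Theorem~\ref{weak convergence l2 general}, now with $\bm{\Sigma}^{(k)}$ in place of $\bm{I}(\bm{\theta}_0,h_0)^{-1}$ and relying on Lemma~\ref{cramer-wold time series} together with the selection consistency \eqref{selection time series} from Theorem~\ref{DS time series model thm}. Fix $k\in\{1,2\}$; the two cases are handled by the identical argument.

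First I would establish that $\{\bm{\mathcal{R}}_n^{(k)}\}_{n\in\mathbb{N}}$ is asymptotically finite-dimensional in the sense of Section~1.8 of \cite{vanwellner1996}. For any $\delta>0$, split according to whether $\hat T_n=T_0$ holds. On $\{\hat T_n=T_0\}$ the construction $\tilde{\bm{\theta}}_{n\hat T_n^c}^{(k)}=\bm{0}$ together with $\bm{\theta}_{0 T_0^c}=\bm{0}$ forces $(\tilde{\bm{\theta}}_n^{(k)}-\bm{\theta}_0)_{T_0^c}=\bm{0}$, so the contribution of the coordinates outside $T_0$ to $\|\bm{\mathcal{R}}_n^{(k)}\|_2$ vanishes identically there; the complementary event $\{\hat T_n\neq T_0\}$ has probability tending to $0$ by \eqref{selection time series}. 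Hence $\|\sqrt{n}\,(\tilde{\bm{\theta}}_n^{(k)}-\bm{\theta}_0)_{T_0^c}\|_2=o_p(1)$, and since $T_0$ is a fixed finite set, for any $m$ exceeding all indices in $T_0$ the tail sum $\sum_{j>m}\langle\bm{e}_j,\bm{\mathcal{R}}_n^{(k)}\rangle^2$ is bounded by this quantity, which is the asymptotic negligibility required to invoke the $\ell^2$ weak-convergence criterion.

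Second, Lemma~\ref{cramer-wold time series} supplies the convergence of all one-dimensional marginals, $\langle \bm{u},\bm{\mathcal{R}}_n^{(k)}\rangle\to^d \mathcal{N}(0,\bm{u}_{T_0}^\top\bm{\Sigma}^{(k)}\bm{u}_{T_0})$ for every $\bm{u}\in\ell^2$. Combining the asymptotic finite-dimensionality from the previous step with this marginal convergence, Theorem~1.8.4 of \cite{vanwellner1996} yields $\bm{\mathcal{R}}_n^{(k)}\to^d\bm{\mathcal{R}}^{(k)}$ in $\ell^2$, where $\bm{\mathcal{R}}^{(k)}$ is the centered Gaussian element whose law is determined by the stated marginal distributions; existence of such a tight Gaussian limit is automatic because the finite-dimensional distributions are those of a genuine Gaussian vector supported on the $s$-dimensional coordinate subspace indexed by $T_0$.

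As for the main obstacle: there is essentially no new analytic difficulty, since the rate of convergence, the selection consistency of $\hat T_n$, and the asymptotic normality of the two-step estimators $\tilde{\bm{\theta}}_n^{(k)}$ on $\{\hat T_n=T_0\}$ have already been obtained in Theorem~\ref{DS time series model thm} and Lemmas~\ref{normality time series}–\ref{cramer-wold time series}. The only step requiring care is the asymptotic finite-dimensionality claim, whose validity rests entirely on \eqref{selection time series}; once that is in place the passage to weak convergence in $\ell^2$ is a direct appeal to the Cramér--Wold-type device of \cite{vanwellner1996}. I would also note explicitly that the argument uses that $s$ is fixed, so that $\bm{u}_{T_0}^\top\bm{\Sigma}^{(k)}\bm{u}_{T_0}$ is well defined for every $\bm{u}\in\ell^2$, which is exactly the standing hypothesis inherited from Lemma~\ref{normality time series}.
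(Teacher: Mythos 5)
Your proposal is correct and follows essentially the same route the paper intends: the paper explicitly omits this proof, stating it is proved exactly as Theorem \ref{weak convergence l2 general}, i.e., asymptotic finite-dimensionality via the selection consistency \eqref{selection time series}, marginal convergence from Lemma \ref{cramer-wold time series}, and Theorem 1.8.4 of \cite{vanwellner1996}. No discrepancies to report.
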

Since Lemma \ref{cramer-wold time series}
and Theorem \ref{weak convergence l2 time series} can be proved as well as 
Lemma \ref{cramer-wold general}
and Theorem \ref{weak convergence l2 general}, respectively,
we omit their proofs.
\subsection{Example: Univariate integer-valued autoregressive model of large order}\label{subsec:INAR}
As an example, let us consider the following model:
\begin{equation}\label{INAR model}
X_t = \sum_{i=1}^{p} {\alpha}_i \circ X_{t-i} + 
\epsilon_t,\quad
t \in \mathbb{Z},
\end{equation}
where ${\alpha}_i \geq 0, i=1,\ldots,p$, and, for a non-negative 
integer-valued random variable $X_{t-1}$,
\[
{\alpha}_i \circ X_{t-1} = \begin{cases}
\sum_{j=1}^{X_{t-1}} \xi_j^{(i,t)} & X_{t-1}>0\\
0 & X_{t-1}=0,
\end{cases}
\]
with i.i.d.~non-negative integer-valued random sequences 
$\{\xi_j^{(i,t)}\}_{j \in \mathbb{N}}\ i=1,\ldots,p,\ t \in \mathbb{Z}$ 
which is independent over $i$ and $t$
with 
$\E[\xi_j^{(i,t)}]={\alpha}_i,\ \Var[\xi_j^{(i,t)}]={\beta}_i$
for every $i =1,\ldots,p$ and $t \in \mathbb{Z}$,
and 
$\{\epsilon_t\}_{t \in \mathbb{Z}}$ is an 
i.i.d.~non-negative integer-valued random sequence which is independent of $X_s, s < t$
with $\E[\epsilon_t] = \mu_\epsilon$ and $\Var[\epsilon_t] = \sigma_\epsilon^2$.
We call $\{\xi_j^{(i,t)}\}_{j \in \mathbb{N}},  t \in \mathbb{Z}$ counting sequences correspond to $\alpha_i \circ$ for every 
$i = 1,\ldots,p$.
Suppose that for every $i \not=i'$, 
$\{\xi_j^{(i,t)}\}_{j \in \mathbb{N}}$
and $\{\xi_j^{(i',t)}\}_{j \in \mathbb{N}}$
are independent.
Let $\{\mathcal{F}_{t}\}_{t \in \mathbb{Z}}$
be a filtration defined by 
\[
\mathcal{F}_{t} = \sigma(X_s; s \leq t),\quad
t \in \mathbb{Z},
\]
$\bm{\theta} = (\mu_\epsilon, {\alpha}_1,\ldots, {\alpha}_p)^\top$ and $\bm{h} = (\sigma^2_\epsilon, {\beta}_1,\ldots,{\beta}_p)^\top$.
We define an $\{\mathcal{F}_t\}_{t \in \mathbb{Z}}$-martingale $\{u_t\}_{t \in \mathbb{Z}}$ as follows:
\[
u_t := X_t - \E[X_t | \mathcal{F}_{t-1}],\quad t \in \mathbb{Z}.
\]
Noting that 
\[
\E[X_t | \mathcal{F}_{t-1}] = \mu_\epsilon + \sum_{i=1}^p {\alpha}_i X_{t-i},
\]
we have the following representation:
\[
X_t = \mu_{\epsilon} + \sum_{i=1}^p \alpha_i X_{t-i} + u_t,
\]
where 
$\{u_t\}_{t \in \mathbb{Z}}$ satisfies
\[
\E[u_t^2 | \mathcal{F}_{t-1}]
= \sigma^2_\epsilon + \sum_{i=1}^p {\beta}_i X_{t-1}
=: \sigma^2(X_{t-1},\ldots,X_{t-p}; \bm{h}).
\]
Let $\bm{\theta}_0$ and $\bm{h}_0$ be the true values of 
$\bm{\theta}$ and $\bm{h}$, respectively. 
Let us assume that 
${\theta}_{0i} \not= 0$ when $i \leq s$ and 
${\theta}_{0i} =0$ when $i >s$ for some $s>1$.
Since $\{\xi_j^{(i,t)}\}$ and $\{\epsilon_t\}$ are non-negative integer-valued random sequences, 
$\E[\xi_j^{(i,t)}] = {\alpha}_i =0$ or $\E[\epsilon_t] = \mu_\epsilon =0$ implies that these random variables are degenerate, i.e., 
$\Var[\xi_j^{(i,t)}]={\beta}_i = 0$ or $\Var[\epsilon_t]=\sigma^2_\epsilon=0$.
Therefore, we can see 
$T_0 = \{j : {\theta}_{0j} \not=0\}\supset \{j : h_{0 j} \not=0\}$, which means the sparsity of $\bm{h}_0$ is 
at most $s.$

\if0
Therefore, we can apply the results of 
Subsection 3.1 to estimate $\bm{\theta} = (\mu_\epsilon, \bm{\alpha}_1,\ldots,\bm{\alpha}_p)^\top$
 when $h = (\sigma_\epsilon^2, \bm{\beta}_1,\ldots,\bm{\beta}_p)^\top$.
\fi

Our aim is to consider the order selection and to construct an asymptotically good estimator of 
the $\bm{\theta}$ through the Dantzig selector based on the observations $X_t, t=1-p,\ldots,n$.
The score functions and the Hessian matrices are given as follows:
\[
\psi_n^{(1)}(\bm{\theta})
= \frac{1}{n}\sum_{t=1}^n \bm{Y}_{t-1} \left\{
X_t - \bm{\theta}^\top \bm{Y}_{t-1}
\right\},\quad
\bm{V}_n^{(1)}(\bm{\theta})
= - \frac{1}{n}\sum_{t=1}^n \bm{Y}_{t-1} \bm{Y}_{t-1}^\top,
\]
\[
\Psi_n(\bm{\theta}, \bm{h})
= \frac{1}{n}\sum_{t=1}^n \frac{\bm{Y}_{t-1}}{\bm{h}^\top \bm{Y}_{t-1}} \left\{
X_t - \bm{\theta}^\top \bm{Y}_{t-1}
\right\},\quad \mbox{and}\quad
\bm{V}_n(\bm{\theta}, h)
= - \frac{1}{n}\sum_{t=1}^n \frac{\bm{Y}_{t-1} \bm{Y}_{t-1}^\top}{\bm{h}^\top \bm{Y}_{t-1}},
\]
where 
$\bm{Y}_{t-1}=(1,X_{t-1},\ldots,X_{t-p})^{\top}$.
Let $\bm{V} = \E[\bm{Y}_{0} \bm{Y}_{0}^\top]$.
We assume the following conditions for $\{X_t\}_{t \in \mathbb{Z}}$.
\begin{assumption}\label{stationarity INAR}
\begin{itemize}
\item[(i)]
It holds that 
\[
\sum_{i=1}^p \bm{\alpha}_{0 i} < 1.
\]
\item[(ii)]
For every $t \in \mathbb{Z}$, 
$X_t$ is $\mathcal{F}_{t-1}$ conditionally 
{\rm sub-Poissonian}, that is, 
\[
\E[\exp(uX_t) | \mathcal{F}_{t-1}]
\leq \exp(\mu_{t-1}( \mathrm{e}^u - 1)),\quad
u >0,
\]
where
\[
\mu_{t-1} = \E[X_t | \mathcal{F}_{t-1}] = \bm{\theta}_0^\top \bm{Y}_{t-1}.
\]
\end{itemize}
\end{assumption}
The condition (i) implies that there exists a strictly stationary and ergodic solution $\{X_t\}_{t \in \mathbb{Z}}$ to \eqref{INAR model}.
The condition (ii) is easily verified 
when, e.g., $\{\xi_j^{(i,t)}\}$ for all $i=1,\ldots,p$ and
$\{\epsilon_t\}$ are Poisson or Bernoulli sequences.
See, e.g., \cite{ahle2022sharp} for tail bounds of sub-Poissonian random variables,  
\cite{du1991integer,neumann2011absolute,
doukhan2012weak,
doukhan2021mixing}, for details of the 
fundamental properties of integer-valued time series.

Now, we introduce the $\tau$-mixing coefficient 
to apply the concentration inequality established by \cite{merlevede2011bernstein}.
For any $\mathbb{R}^d$-valued random variable $X$ on a probability space $(\Omega, \mathcal{F}, \Prob)$ with $\|X\|_{L^1} < \infty$, and any $\sigma$-field $\mathcal{M} \subset \mathcal{F}$, let 
$\Prob_{X | \mathcal{M}}$ be a conditional distribution of $X$ given $\mathcal{M}$ and 
$\Prob_X$ be the distribution of $X$.
Then, we consider the following coefficient:
\[
\tau(\mathcal{M}, X) = 
\left\|\sup_{f \in \Lambda_1(\mathbb{R}^d)}
\int f(\bm{x}) \Prob_{X | \mathcal{M}} (\dr \bm{x})
- \int f(\bm{x}) \Prob_{X} (\dr \bm{x})
\right\|_{L^1},
\]
where 
\[
\Lambda_1(\mathbb{R}^d)
= \left\{f: \mathbb{R}^d \to \mathbb{R} \left| 
\sup_{x \not=y} \frac{|f(\bm{x})-f(\bm{y})|}{\|\bm{x} - \bm{y}\|_1}\right. \leq 1
\right\}.
\]
The $\tau$-mixing coefficients $\tau(i)$, $i \in \N$ of 
a sequence $\{X_t\}$ are then given by
\[
\tau(i)
= \sup_{k \geq 0} \max_{1 \leq l \leq k}
\sup \left\{
\tau(\mathcal{M}_j, (X_{j_1},\ldots,X_{j_l})) |
j+i \leq j_1 \leq \ldots \leq j_l
\right\},
\] 
where $\mathcal{M}_j = \sigma(X_t, t \leq j)$.
We call $\{X_t\}$ is $\tau$-weakly dependent if 
$\tau(l) \to 0$ as $l \to \infty$. 
See, e.g., \cite{doukhan2012weak} for the detail of the $\tau$-coefficients.
Especially, noting that the true order of the INAR 
$\{X_t\}$ is $s$, we have the following lemma.
\begin{lem}\label{beta-mixing INAR}
Suppose that Assumption \ref{stationarity INAR}
holds.
\begin{itemize}
\item[(i)] 
There exist constants $a, c>0$ and $\gamma_1 >0$ such that 
\[
\tau(l) \leq a\exp(-cl^{\gamma_1}),\quad
l \in \mathbb{N}.
\]
Moreover, it holds that 
\[
\gamma := \left(\frac{1}{\gamma_1} + 2 \right)^{-1} <1. 
\]
\item[(ii)]
It holds that 
\[
\beta(l) \leq as\exp(-cl^{\gamma_1}),\quad
l \in \mathbb{N}.
\]
Here, ${\beta}(l), l \in \mathbb{N}$ is the ${\beta}$-mixing coefficient, i.e., 
\[
{\beta}(l) := \sup \frac{1}{2}\sum_{i=1}^I \sum_{j=1}^J
|\Prob(A_i \cap B_j) - \Prob(A_i)\Prob(B_j)|,\quad l \in \mathbb{N},
\]
where the supremum is taken over all pair of partitions
$\{A_i\}_{1 \leq i \leq I} \subset 
\sigma(X_u, u \leq t)$
and $\{B_j\}_{1 \leq j \leq J} \subset \sigma(X_u, u \geq t+l)$ for every $I, J \in \mathbb{N}$.
\end{itemize}
\end{lem}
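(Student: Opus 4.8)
The plan is to reduce the claim to a standard (sub)geometric ergodicity statement for a finite-dimensional Markov chain, and then to read off the $\tau$- and $\beta$-mixing bounds from it, keeping the dependence on the order $s$ explicit. By Assumption~\ref{stationarity INAR}(i) a strictly stationary ergodic solution $\{X_t\}$ to \eqref{INAR model} exists; denote by $\pi$ its stationary law on the relevant state space. Since $\alpha_{0i}=0$ for every $i\geq s$, the true recursion is $X_t=\sum_{i=1}^{s}\alpha_{0i}\circ X_{t-i}+\epsilon_t$ with counting sequences and innovations drawn afresh at each time, so the vector $\bm{\chi}_t:=(X_t,X_{t-1},\ldots,X_{t-s+1})^\top$ is a time-homogeneous Markov chain on $\N_0^{s}$. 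It therefore suffices to establish a $V$-geometric (or at worst $V$-subgeometric, with rate $r(l)=\exp(-cl^{\gamma_1})$ for some $\gamma_1\in(0,1]$) ergodic bound $\norm{P^l(\bm{\chi},\cdot)-\pi}_{\mathrm{TV}}\leq C\,V(\bm{\chi})\,r(l)$ for this chain; everything else is extraction. Note that the side assertion $\gamma=(1/\gamma_1+2)^{-1}<1$ is automatic because $1/\gamma_1+2>2$, so it only has to be recorded for the later application of the Bernstein inequality of \cite{merlevede2011bernstein}.

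For the ergodic bound I would verify a Foster--Lyapunov drift condition. Let $A$ be the $s\times s$ companion matrix of $(\alpha_{01},\ldots,\alpha_{0s})$; it is entrywise nonnegative, and since $\alpha_{0i}\geq0$ with $\sum_{i=1}^{s}\alpha_{0i}<1$ its spectral radius is strictly below one, so there exist a weight vector $\bm{w}>\bm{0}$ with bounded entries (e.g.\ $w_i=\sum_{j\geq i}\alpha_{0j}$) and $\rho\in(0,1)$ with $A^\top\bm{w}\leq\rho\,\bm{w}$ componentwise. Taking $V(\bm{x})=1+\bm{w}^\top\bm{x}$ and using the thinning identity $\E[\alpha_{0i}\circ X_{t-i}\mid\mathcal{F}_{t-1}]=\alpha_{0i}X_{t-i}$ together with $\E[\epsilon_t]=\mu_\epsilon<\infty$, one obtains $PV\leq\rho V+b$ on $\N_0^{s}$ for a finite constant $b$. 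Every finite level set $C=\{\bm{x}:V(\bm{x})\leq d\}$ of $V$ is a small set: accessibility of, and a minorization at, such a finite set follow from the light (sub-Poissonian) conditional tails of $X_t$ and of the thinnings, exactly as in the standard analysis of INAR chains (see \cite{du1991integer,neumann2011absolute,doukhan2012weak,doukhan2021mixing}). Combining the drift and minorization conditions via the geometric ergodic theorem of Meyn and Tweedie (or, if only a subgeometric minorization is available, its Douc--Fort--Moulines--Soulier refinement) gives $\norm{P^l(\bm{\chi},\cdot)-\pi}_{\mathrm{TV}}\leq C\,V(\bm{\chi})\,r(l)$ with $r(l)=\exp(-cl^{\gamma_1})$ (one may take $\gamma_1=1$ when a one-step, aperiodic minorization is available).

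Part (ii) is then immediate from the Markov-chain identity $\beta(l)=\int_{\N_0^{s}}\norm{P^l(\bm{\chi},\cdot)-\pi}_{\mathrm{TV}}\,\pi(\mathrm{d}\bm{\chi})\leq C\,r(l)\int V\,\mathrm{d}\pi$, combined with $\int V\,\mathrm{d}\pi=1+\bm{w}^\top\E_\pi[\bm{\chi}_0]=1+\bigl(\sum_i w_i\bigr)\,\mu_\epsilon/(1-\sum_i\alpha_{0i})=O(s)$, since each of the $s$ coordinates of $\E_\pi[\bm{\chi}_0]$ equals the common stationary mean $\mu_\epsilon/(1-\sum_i\alpha_{0i})$ and the $w_i$ are bounded; this yields $\beta(l)\leq a s\exp(-cl^{\gamma_1})$. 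For part (i) I would invoke a quantile-truncation bound of Dedecker--Prieur/Rio type, which controls the $\tau$-coefficient of a stationary sequence by $\tau(l)\leq 2\int_0^{\beta(l)}Q(u)\,\mathrm{d}u$, where $Q$ is the quantile function of $\norm{\bm{\chi}_0}_1$ under $\pi$. Because the sub-Poissonian assumption forces Poisson-like stationary tails, $Q(u)=O(\log(1/u))$, so $\int_0^{\beta(l)}Q(u)\,\mathrm{d}u=O\!\bigl(\beta(l)\log(1/\beta(l))\bigr)$; substituting the bound on $\beta(l)$ and absorbing the resulting polynomial-in-$l$ and $\log s$ factors into a slightly smaller $c$ (and into $a$) produces $\tau(l)\leq a\exp(-cl^{\gamma_1})$.

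The main obstacle is the minorization/irreducibility step for an integer-valued chain: without an auxiliary hypothesis such as $\Prob(\epsilon_t=0)>0$ one must argue with some care that the finite level sets of $V$ are small sets and that the chain is aperiodic, and it is exactly this point that decides whether the rate can be taken truly geometric ($\gamma_1=1$) or only subgeometric ($\gamma_1<1$), which is why the lemma is stated for a general $\gamma_1$. A second, purely technical, difficulty is bookkeeping: one has to make sure the $s$-dependence of the constants is accounted for---the explicit factor $s$ in (ii) coming from $\int V\,\mathrm{d}\pi=O(s)$, while any further $s$-dependence (entering through the minorization constant of the finite small set, hence through $c$ and $a$) is allowed to be absorbed into those constants---so that the bounds remain usable in the high-dimensional regime of Theorem~\ref{DS time series model thm}.
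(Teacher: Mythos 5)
The paper does not prove this lemma from scratch: it simply invokes Theorems~1 and~2 of the cited weak-dependence paper of Doukhan et al., whose argument is a coupling/contraction one. Two copies of the chain are driven by the same counting sequences and innovations, the condition $\sum_i\alpha_{0i}<1$ yields a contraction of the expected $\ell^1$-distance between them, and this bounds the $\tau$-coefficient directly -- no irreducibility, small sets, or total-variation convergence are needed, because $\tau$ is a Wasserstein-type quantity. The $\beta$-bound in (ii) is then obtained from (i) by exploiting integer-valuedness: for $\N_0$-valued variables $\Prob(X\neq Y)\leq \E|X-Y|$, so the total-variation distance between the conditional and unconditional laws of the $s$-dimensional state vector is bounded by the sum over its $s$ coordinates of the coupling distances, which is exactly where the factor $s$ comes from. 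Your route runs in the opposite direction ($\beta$ first via Meyn--Tweedie, then $\tau$ via the Dedecker--Prieur quantile inequality) and obtains the factor $s$ from $\int V\,\mathrm{d}\pi=O(s)$ instead.

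The genuine gap is the one you yourself flag and then leave unresolved: the minorization/irreducibility step. Assumption~3.7 only asserts $\sum_i\alpha_{0i}<1$ and a conditional sub-Poissonian tail; it does \emph{not} guarantee that the chain $\bm{\chi}_t$ is $\psi$-irreducible or aperiodic (e.g.\ nothing forces $\Prob(\epsilon_t=0)>0$ or gives the innovations full support, and a degenerate or lattice-supported innovation is compatible with the stated hypotheses). Without that, finite level sets of $V$ need not be small sets, and the entire geometric-ergodicity theorem on which both of your parts rest is unavailable -- so the foundation of the argument, not a technicality, is missing. Since part (i) is the load-bearing statement for the later use of the Bernstein inequality, you cannot defer this point. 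The fix is to abandon the total-variation route and prove the $\tau$-bound by the explicit coupling contraction (which needs only the drift-type condition you already verified), then deduce (ii) from (i) by the discreteness argument above; alternatively you would have to add an explicit irreducibility/aperiodicity hypothesis that the paper does not impose. The remaining ingredients of your sketch (the companion-matrix drift condition, the identity $\beta(l)=\E[\lVert P^l(\bm{\chi}_0,\cdot)-\pi\rVert_{\mathrm{TV}}]$, the automatic fact that $\gamma<1$) are fine, though note that with $w_i=\sum_{j\geq i}\alpha_{0j}$ you only get $A^\top\bm{w}\leq\bm{w}$ componentwise, so the strict contraction has to be extracted after $s$ iterations or with a perturbed weight vector.
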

See Theorems 1 and 2 of \cite{doukhan2012weak} for the proof.

We consider the following conditions.
\begin{assumption}\label{regularity INAR}
\begin{itemize}
\item[(i)]
There exists a positive constant $f_\infty$ 
such that 
\[
F_\infty(T_0; \bm{V})>f_\infty.
\]
\item[(ii)]
The matrices 
\[
\bm{V}_{T_0, T_0} 
= \E[\bm{Y}_{0 T_0}\bm{Y}_{0 T_0}^\top],\quad
\mathcal{I}^{(2)}(\bm{\theta}_0, \bm{h}_0)
= \E\left[\frac{\bm{Y}_{0 T_0} \bm{Y}_{0 T_0}}{\bm{h}_{0T_0}^\top \bm{Y}_{0 T_0}}\right]
\]
are invertible.
\item[(iii)]
The order $p$ of integer-valued autoregressive model and the sparsity $s$ of the true value $\bm{\theta}_0$, and tuning parameter $\lambda_n$ satisfy that 
\[
s= O(n^{\zeta}),\quad
ps\log p = o(n),\quad
\frac{1}{\lambda_n}\sqrt{\frac{ps \log p}{n}}=o(1),\quad
n \to \infty,
\]
where $\zeta \in [0, 2/11\land \gamma/(1+\gamma))$ is a constant.
\end{itemize}
\end{assumption}

Let $K_X$ be a constant defined by 
\[
K_X = \frac{2 \mu_{0\epsilon} + 1}{2(1-\sum_{i=1}^p {\alpha}_{0i})}.
\]

Then, we have the following proposition.

\begin{prop}\label{INAR maximal ineq}
Let Assumptions \ref{stationarity INAR} and  \ref{regularity INAR} hold. 
\begin{itemize}
\item[(i)]
For every $q \geq 1$, it holds that
\[
\|X_t\|_{L^q} \leq q K_X,\quad
t \in \mathbb{Z}.
\]
Moreover, it holds that
\[
\Prob(\|\psi_n^{(1)}(\bm{\theta}_0)\|_\infty>\lambda_n)
\leq \frac{12\sqrt{6} \|\bm{h}_0\|_\infty^{1/2} {K}_X^{3/2}}{\lambda_n}\sqrt{\frac{ps\log(1+p)}{n}}.
\]
\item[(ii)]
It holds that 
\[
\sup_{\bm{v} \in C_{T_0} \cap \mathbb{S}_p} \sup_{q \geq 1}\frac{\E[|\bm{v}^\top \bm{Y}_{t-1}|^q]^{1/q}}{q} \leq K_s,\quad
t \in \mathbb{Z},
\]
where 
\[
K_s = 2 \sqrt{s} (1+K_X)
\]
and $\mathbb{S}_p \subset \mathbb{R}^p$ is a
unit sphere centered at $\bm{0}$. 
\item[(iii)]
For every $0 < \eta < f_\infty/2$, it holds that
\[
\Prob\left(
F_\infty(T_0, \bm{V}_n^{(1)}) > f_\infty - 2 \eta
\right) \to 1,\quad
n \to \infty,
\]
where 
\[
F_\infty(T_0, \bm{V}_n^{(1)})
=\inf_{\bm{v} \in C_{T_0}\setminus\{ \bm{0} \}} \frac{|\bm{v}^\top \bm{V}_n^{(1)} \bm{v} |}{\| \bm{v}_{T_0}\|_1 \|\bm{v} \|_\infty}.
\]
\if0
Let $W_n=V-\bm{V}_n^{(1)}$.
For every $0 < \eta< f_\infty/2$, it holds that 
\[
\Prob\left(
\sup_{v \in C_{T_0}\cap \mathbb{S}_p}
|v^\top W_n v| > \eta
\right) \to 0,\quad
n \to \infty.
\]
\fi
\end{itemize}
\end{prop}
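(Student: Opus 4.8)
The plan is to treat the three parts separately; parts (i) and (iii) carry the weight, while part (ii) is an elementary norm comparison on $C_{T_0}$. For the moment bound in (i), I would start from the conditional sub-Poissonian property in Assumption \ref{stationarity INAR}(ii): the bound $\E[\me^{uX_t}\mid\mathcal{F}_{t-1}]\le\exp(\mu_{t-1}(\me^u-1))$ with $\mu_{t-1}=\bm{\theta}_0^\top\bm{Y}_{t-1}=\mu_{0\epsilon}+\sum_{i=1}^p\alpha_{0i}X_{t-i}$ yields, after optimizing over $u$ (or by appealing to the sub-Poissonian tail estimates of \cite{ahle2022sharp}), a conditional moment inequality of the form $\E[X_t^q\mid\mathcal{F}_{t-1}]\le(\mu_{t-1}+q/2)^q$ for every $q\ge1$; the constant $1/2$ here is what produces the ``$+1$'' inside $K_X$. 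Set $M_q:=\sup_t\|X_t\|_{L^q}$, which is finite by strict stationarity together with the finiteness of the stationary moment generating function near the origin, itself a consequence of Assumption \ref{stationarity INAR}(i)--(ii). Taking expectations and applying Minkowski's inequality twice (first $\|X_t\|_{L^q}\le\|\mu_{t-1}\|_{L^q}+q/2$, then $\|\mu_{t-1}\|_{L^q}\le\mu_{0\epsilon}+(\sum_i\alpha_{0i})M_q$ by stationarity) gives the one-step recursion $M_q\le\mu_{0\epsilon}+(\sum_i\alpha_{0i})M_q+q/2$, hence $M_q\le(\mu_{0\epsilon}+q/2)/(1-\sum_i\alpha_{0i})\le qK_X$, the last inequality using $q\ge1$.

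For the second assertion of (i), note that $\psi_n^{(1)}(\bm{\theta}_0)=\tfrac1n\bm{M}_n$, where $\bm{M}_n=(\bm{M}_n^{(j)})_j$ and $\bm{M}_n^{(j)}=\sum_{t=1}^n Y_{t-1,j}u_t$ is a square-integrable martingale in $n$ because $\E[u_t\mid\mathcal{F}_{t-1}]=0$. I would apply the martingale maximal inequality of \cite{nishiyama2021martingale}, which bounds $\E[\max_j|\bm{M}_n^{(j)}|]$ by a constant times $\sqrt{\log(1+p)}$ multiplied by the square root of a bound on the summed predictable quadratic variations $\sum_j\langle\bm{M}^{(j)}\rangle_n=\sum_{t=1}^n\|\bm{Y}_{t-1}\|_2^2\,\sigma^2(X_{t-1},\dots,X_{t-p};\bm{h}_0)$, plus a lower-order jump term controlled by the sub-exponential tails from (i) and the condition $\log p=o(n)$. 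Estimating $\E[\|\bm{Y}_{t-1}\|_2^2\,\sigma^2(\cdot;\bm{h}_0)]$ via part (i) and the facts that $\sigma^2(\cdot;\bm{h}_0)=\sigma^2_{0\epsilon}+\sum_i\beta_{0i}X_{t-i}$ has at most $s$ nonzero $\beta_{0i}$ (since $\{j:h_{0j}\ne0\}\subset T_0$) and that Hölder's inequality gives $\E[X_{t-i}^2X_{t-j}]\le 27K_X^3$, one obtains a bound of order $\|\bm{h}_0\|_\infty psK_X^3$. Dividing by $n$ and applying Markov's inequality then delivers the displayed bound with the explicit constant $12\sqrt6$.

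Part (ii) is immediate: for $\bm{v}\in C_{T_0}\cap\mathbb{S}_p$ one has $\|\bm{v}\|_1\le 2\|\bm{v}_{T_0}\|_1\le 2\sqrt{s}\,\|\bm{v}\|_2=2\sqrt{s}$ by Cauchy--Schwarz and $|T_0|=s$, so by Minkowski and part (i) (with $\|Y_{t-1,j}\|_{L^q}\le q(1+K_X)$, the constant coordinate giving $1\le q$ and an $X$-coordinate giving $qK_X$) we get $\E[|\bm{v}^\top\bm{Y}_{t-1}|^q]^{1/q}\le\sum_j|v_j|\,\|Y_{t-1,j}\|_{L^q}\le q(1+K_X)\|\bm{v}\|_1\le 2\sqrt{s}(1+K_X)\,q=K_sq$, and dividing by $q$ and taking suprema finishes it. For (iii), write $\widehat{\bm{V}}_n:=-\bm{V}_n^{(1)}=\tfrac1n\sum_t\bm{Y}_{t-1}\bm{Y}_{t-1}^\top$ and $\|\bm{A}\|_{\max}:=\max_{j,k}|A_{jk}|$; since $\bm{v}^\top\bm{V}\bm{v}\ge0$ and $\bm{v}^\top\widehat{\bm{V}}_n\bm{v}\ge0$, for every $\bm{v}\in C_{T_0}\setminus\{\bm 0\}$ one has
\[
\frac{|\bm{v}^\top\bm{V}_n^{(1)}\bm{v}|}{\|\bm{v}_{T_0}\|_1\|\bm{v}\|_\infty}\ge\frac{|\bm{v}^\top\bm{V}\bm{v}|}{\|\bm{v}_{T_0}\|_1\|\bm{v}\|_\infty}-\frac{|\bm{v}^\top(\widehat{\bm{V}}_n-\bm{V})\bm{v}|}{\|\bm{v}_{T_0}\|_1\|\bm{v}\|_\infty},
\]
and using $|\bm{v}^\top(\widehat{\bm{V}}_n-\bm{V})\bm{v}|\le\|\widehat{\bm{V}}_n-\bm{V}\|_{\max}\|\bm{v}\|_1^2$ together with $\|\bm{v}_{T_0}\|_1\ge\tfrac12\|\bm{v}\|_1$ and $\|\bm{v}\|_\infty\ge\|\bm{v}_{T_0}\|_\infty\ge\tfrac1s\|\bm{v}_{T_0}\|_1\ge\tfrac1{2s}\|\bm{v}\|_1$ on $C_{T_0}$, the last term is at most $4s\|\widehat{\bm{V}}_n-\bm{V}\|_{\max}$. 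Taking the infimum over $\bm{v}$ and invoking Assumption \ref{regularity INAR}(i) gives $F_\infty(T_0,\bm{V}_n^{(1)})\ge f_\infty-4s\|\widehat{\bm{V}}_n-\bm{V}\|_{\max}$, so it remains to show $s\|\widehat{\bm{V}}_n-\bm{V}\|_{\max}\to^p0$; this follows from a union bound over the $(1+p)^2$ entries combined with the Bernstein-type inequality for $\tau$-weakly dependent sequences of \cite{merlevede2011bernstein}, which applies because Lemma \ref{beta-mixing INAR} supplies exponentially decaying $\tau$-mixing coefficients and parts (i)--(ii) supply the moment bounds, using $ps\log p=o(n)$ (hence $s^2\log p=o(n)$) from Assumption \ref{regularity INAR}(iii).

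I expect the main obstacle to be the second assertion of (i): fitting the martingale maximal inequality of \cite{nishiyama2021martingale} so that the jump/sub-exponential contribution is genuinely of lower order, and tracking the numerical constants so that the estimate comes out as $12\sqrt6\,\|\bm{h}_0\|_\infty^{1/2}K_X^{3/2}\sqrt{ps\log(1+p)/n}$. The conditional sub-Poissonian moment estimate underlying $\|X_t\|_{L^q}\le qK_X$ is the other delicate point, and the exponential $\tau$-mixing of Lemma \ref{beta-mixing INAR} is precisely what makes the concentration step in (iii) go through in the regime where $p$ and $s$ diverge with $n$.
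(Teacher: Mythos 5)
Your proposal is correct. Parts (i) and (ii) follow essentially the same route as the paper: the conditional moment bound $\E[X_t^q\mid\mathcal{F}_{t-1}]\le(\bm{\theta}_0^\top\bm{Y}_{t-1}+q/2)^q$ from the sub-Poissonian assumption, Minkowski plus stationarity for $\|X_t\|_{L^q}\le qK_X$, the Nishiyama martingale maximal inequality for $\psi_n^{(1)}(\bm{\theta}_0)$ (the paper uses the $\E[\sum_t\max_i(\xi_t^i)^2]$ form and bounds $\E[\|\bm{Y}_{t-1}\|_\infty^3]\le 4p\cdot 3^3K_X^3+4$, which is where both the factor $p$ and the constant $12\sqrt6=2\sqrt2\cdot\sqrt{108}$ come from; your $\ell_2$-variant gives the same order), and the elementary $\|\bm{v}\|_1\le2\sqrt s$ computation for (ii). Part (iii) is where you genuinely diverge. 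The paper fixes $\bm{v}$, applies the Merlev\`ede-type Bernstein inequality to the scalar process $\bm{Z}_t(\bm{v})=|\bm{v}^\top\bm{Y}_{t-1}|^2-\E[|\bm{v}^\top\bm{Y}_{t-1}|^2]$ (whose sub-Weibull norm is $O(s)$ by part (ii)), and then discretizes $C_{T_0}\cap\mathbb{S}_p$ via the reduction $\sup_{C_{T_0}\cap\mathbb{S}_p}\le 9\sup_{\mathcal{B}_0(2s)}$ and a union bound over $\binom{p}{2s}6^{2s}$ net points, as in Basu--Michailidis. You instead prove the deterministic cone inequality $|\bm{v}^\top\bm{W}_n\bm{v}|/(\|\bm{v}_{T_0}\|_1\|\bm{v}\|_\infty)\le 4s\max_{j,k}|(\bm{W}_n)_{jk}|$ (using $\|\bm{v}_{T_0}\|_1\ge\tfrac12\|\bm{v}\|_1$ and $\|\bm{v}\|_\infty\ge\tfrac1{2s}\|\bm{v}\|_1$ on $C_{T_0}$, both correct) and then union-bound over the $(1+p)^2$ entries. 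Your route avoids covering numbers and the combinatorial factor entirely, and the moment/mixing inputs are needed only for the individual products $Y_{t-1,j}Y_{t-1,k}$ rather than for the $\bm{v}$-dependent quadratic forms; the price is the explicit factor $4s$, which forces entrywise accuracy $\eta/(4s)$, but this is covered by $s^2\log p\le ps\log p=o(n)$ and the sub-exponential tails, so both arguments close under Assumption \ref{regularity INAR}(iii). Your remark that the recursion $M_q\le\mu_{0\epsilon}+(\sum_i\alpha_{0i})M_q+q/2$ needs $M_q<\infty$ a priori is a legitimate point that the paper passes over silently.
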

See Subsection \ref{subsec: proof of sec3} for the 
proof.
\if0
It follows from Assertion (iii) and the definition of $F_\infty(T_0, \cdot)$ that 
\begin{eqnarray*}
F_\infty(T_0, \bm{V}_n^{(1)})
&=&\inf_{v \in C_{T_0}\setminus\{ \bm{0} \} } \frac{|v^\top \bm{V}_n^{(1)}v|}{\|v_{T_0}\|_1 \|v\|_\infty}\\
&=&  \inf_{v \in C_{T_0}\setminus\{ \bm{0} \}} \frac{|v^\top V v|}{\|v_{T_0}\|_1 \|v\|_\infty}
- \sup_{v \in C_{T_0}\setminus \{ \bm{0}\} } \frac{|v^\top W_n v|}{\|v_{T_0}\|_1 \|v\|_\infty}\\
&>& f_\infty - 2\eta.
\end{eqnarray*}
with probability tending to $1$,
which implies that Assumption \ref{time series matrix condition} is satisfied for 
$\delta = f_\infty - 2\eta$.
\fi
\begin{rem}
\begin{itemize}
\item[(i)]
In 
\cite{drost2008note}, it is proved that 
if $\{\xi_j^{(i,t)}\}_{j \in \mathbb{N}},\ i=1,\ldots,p$ and 
$\{\epsilon_t\}_{t \in \mathbb{Z}}$ satisfy
$\E[|\xi_j^{(i,t)}|^q]<\infty$ and $\E[|\epsilon_t|^q]<\infty$, it holds that 
$\E[|X_t|^q]<\infty$
under Assumption \ref{regularity INAR}-(i).
On the other hand, we proved the sub-exponential property 
of INAR under the sub-Poissonian condition in Proposition \ref{INAR maximal ineq}-(ii), which may be a new result.
\item[(ii)]
We may consider heavier tailed time series such that 
Assumption \ref{regularity INAR}-(iv) is not satisfied.
For such cases, we can use a self-weighted score function such as
\[
\psi^{(1)}_{nw}(\bm{\theta})
= \frac{1}{n}\sum_{t=1}^n w_{t-1} \bm{Y}_{t-1} \left\{
X_t - \bm{\theta}^\top \bm{Y}_{t-1}
\right\},
\]
where 
$w_t$'s are non-negative $\mathcal{F}_{t}$-measurable random variables for every $t \in \mathbb{Z}$ satisfying
\[
\sup_{\bm{v} \in C_{T_0}\cap \mathbb{S}_p} \sup_{q \geq 1}\frac{\E[|\bm{v}^\top w_{t-1} \bm{Y}_{t-1}|^q]^{1/q}}{q} \leq K_s,\quad
t \in \mathbb{Z}.
\]
for some $K_s > 0$.
\end{itemize}
\end{rem}
As for the nuisance parameter $\bm{h}$, 
we can construct an estimator $\hat{\bm{h}}_n$ by 
a solution to the following estimating equation:
\[
\varphi_{n \hat{T}_n}(\bm{h}_{\hat{T}_n})
= \frac{1}{n}\sum_{t=1}^n \bm{Y}_{t-1 \hat{T}_n}\left\{
|X_t-\hat{\bm{\theta}}_{n \hat{T}_n}^{(1) \top}\bm{Y}_{t-1 \hat{T}_n}|^2
- \bm{h}_{\hat{T}_n}^\top \bm{Y}_{t-1 \hat{T}_n}
\right\} =\bm{0},\quad
\bm{h}_{\hat{T}_n^c} = \bm{0},
\]
where 
\[
\hat{T}_n = \{j : |\hat{{\theta}}_{n j}^{(1)}|> \tau_n\},\quad
\tau_n \in \left(\frac{4 \lambda_n}{\delta}, \frac{\inf_{j \in T_0}|{\theta}_{0j}|}{2}\right).
\] 
Here, we consider the restriction by $\hat{T}_n$
since $\{j : h_{0j} \not=0\}\subset T_0$.
Note that this equation can be solved as follows
\[
\hat{\bm{h}}_{n \hat{T}_n}
= \left(\frac{1}{n}\sum_{t=1}^n \bm{Y}_{t-1\hat{T}_n} 
\bm{Y}_{t-1 \hat{T}_n}^\top\right)^{-1}
\frac{1}{n}\sum_{t=1}^n 
(X_t - \hat{\bm{\theta}}_{n\hat{T}_n}^{(1) \top}\bm{Y}_{t-1 \hat{T}_n})^2\bm{Y}_{t-1 \hat{T}_n},\quad
\hat{\bm{h}}_{n \hat{T}_n^c} = \bm{0},
\]
which is consistent when the Dantzig selector 
type estimator $\hat{\bm{\theta}}_n^{(1)}$ and 
$\hat{T}_n$ are consistent.
Therefore, the two step estimator $\tilde{\bm{\theta}}_n$ is defined as a 
solution to the following equations:
\[
\Psi_{n \hat{T}_n}\left(\bm{\theta}_{\hat{T}_n}, \hat{\bm{h}}_{n \hat{T}_n}\right)
= \frac{1}{n}\sum_{t=1}^n \frac{\bm{Y}_{t-1 \hat{T}_n}}{\hat{\bm{h}}_{n \hat{T}_n}^\top \bm{Y}_{t-1 \hat{T}_n}} \left\{
X_t - \bm{\theta}_{\hat{T}_n}^\top \bm{Y}_{t-1 \hat{T}_n}
\right\}=\bm{0},\ 
\bm{\theta}_{\hat{T}_n^c} = \bm{0}.
\]
In conclusion, under Assumption \ref{regularity INAR}, we can apply the estimation procedure 
described in the previous subsection and obtain an
asymptotically normal estimator when $s$ is independent of $n$ and $p$.

\begin{rem}
\begin{itemize}
\item[(i)]
When $\{\xi_j^{(i,t)}\},\ i=1,\ldots,p$ and 
$\{\epsilon_t\}$ are Poisson sequences, 
we have $\bm{h} = \bm{\theta}$.
Therefore, we can plug $\hat{\bm{\theta}}_n^{(1)}$ itself
in $\Psi_n$ when $\hat{\bm{\theta}}_n^{(1)}$ is consistent.
\item[(ii)]
Here, we consider the INAR $(p)$ model 
with large $p$ as an example.
However, 
we can also apply the general theory to  
INAR models with high-dimensional covariates as well as \cite{wang2020variable}, 
or row-wise estimation for regression 
matrices of 
multivariate INAR models
under appropriate conditions.
We provide some results of the numerical 
simulations for the 
multivariate INAR $(1)$
in Subsection \ref{subsec:simulation}
\end{itemize}
\end{rem}
\subsection{Application to the Hawkes processes}\label{subsec:hawkes}
The Poisson INAR $(p)$ process is known to be a
discrete approximation of the Hawkes process.
More precisely, let us consider the Hawkes process 
$\{N_t\}_{t \in \mathbb{R}}$ with the intensity
\[
\lambda(t) = \eta + \int_{\mathbb{R}} a(t-s) \dr N_s,
\]
where $\eta>0$ and $h: \mathbb{R} \to \mathbb{R}_{+}$ is a non-negative and continuous function with 
$
a(t)=0$ for 
$t \leq 0$.
We assume that the support of $a$ equals to $(0, \tau]$ for some constant $\tau>0$ and that 
\[
\int_{\mathbb{R}} a(t) \dr t < 1.
\]
According to \cite{kirchner2016hawkes}, 
there exist constants $\delta>0$ and $\tilde{K}<1$ such that for any $\Delta \in (0, \delta)$, 
\[
K^{(\Delta)} = \Delta \sum_{k=1}^\infty a(k \Delta)
= \Delta \sum_{k=1}^p a(k \Delta) \leq \tilde{K} <1,
\]
where $p \geq \lceil \tau/\Delta \rceil$.
Let $\{X_t^{(\Delta)}\}$ be an INAR($p$) process 
given by 
\[
X_t^{(\Delta)}
= \sum_{i=1}^p {\alpha}_i^{(\Delta)} \circ X_{t-i}^{(\Delta)} + \epsilon_t^{(\Delta)},
\]
where 
$\{\epsilon_t^{(\Delta)}\}$ is an i.i.d.~$Poisson(\Delta \eta)$ sequence and 
${\alpha}_i^{(\Delta)} = \Delta a(i \Delta)$.
We define the counting process 
$N^{(\Delta)}$ as follows:
\[
N^{(\Delta)}(A)
= \sum_{k : k \Delta \in A} X_k^{(\Delta)},\quad
A \in \mathcal{B}_b(\mathbb{R}),\quad
\Delta \in (0, \delta),
\]
where $\mathcal{B}_b(\mathbb{R})$ is a family
of bounded Borel sets in $\mathbb{R}$.
Then, \cite{kirchner2016hawkes} shows that 
\[
N^{(\Delta)} \to^d N,\quad
\Delta \to 0.
\]
Consider that we observe a Hawkes process 
$\{N_t\}_{t \in [0,T]}$.
Fix a small constant $\Delta = \lceil T/n \rceil$
for $n \in \mathbb{N}$.
We can consider the $n$-partitions of $(0, T]$:
\[
(0,T]
= \bigcup_{k=0}^{n-1} (k\Delta, (k+1)\Delta].
\]
Then, we can see that if $\Delta$ is sufficiently small, 
\[
N((k \Delta, (k+1)\Delta])
\approx X_k^{(\Delta)},
\]
where 
$\{X_k^{(\Delta)}\}$ is a INAR($p$) process 
with sufficiently large $p$.
Conditional
least squares estimator based on this approximation
is introduced in \cite{kirchner2017estimation}.
On the other hand, we can also apply the estimation method discussed in Subsection \ref{subsec:INAR}.
Let $\tau$ be a constant such that 
$\tau \leq p \Delta \leq T$ and 
$s = \lceil \tau/\Delta \rceil$.
Then, $\bm{\alpha}^{(\Delta)}=({\alpha}_1^{(\Delta)},\ldots,{\alpha}_p^{(\Delta)})^\top$ satisfies that
\[
\bm{\alpha}^{(\Delta)} = ({\alpha}_1^{(\Delta)},\ldots,{\alpha}_s^{(\Delta)},0,\ldots,0)^\top \in \mathbb{R}^p.
\]
Therefore, we can estimate $\bm{\alpha}^{(\Delta)}$ and 
the support $\tau = s \Delta$ by using the Dantzig selector and proposed estimator.

\subsection{Simulation studies}\label{subsec:simulation}
We consider the 
following model:
\begin{equation}\label{sim model uni}
X_t|\mathcal{F}_{t-1}
\sim Poisson(\lambda_t),
\quad
\lambda_t = \mu_\epsilon + \sum_{i=1}^p \alpha_i X_{t-i}
\end{equation}
and $\{\mathcal{F}_t\}_{t \in \mathbb{Z}}$ is a filtration defined by 
$\mathcal{F}_{t} = \sigma(X_s; s \leq t)$.
Note that this model can be considered as a special 
case of univariate integer-valued time series of 
order $p$;
\[
X_t = \sum_{i=1}^p \alpha_i \circ X_{t-i} + \epsilon_t,\quad t \in \mathbb{Z},
\]
where $\alpha_i \circ, i=1,\ldots$ are 
Poisson reproduction operators and 
$\{\epsilon\}_{t \in \mathbb{Z}}$
is an i.i.d. integer-valued 
innovation process.
If $\epsilon_t$ and the counting sequences correspond to $\alpha_i \circ, i=1,\ldots,p$ are independent 
Poisson sequences, the conditional distribution of the process $\{X_t\}_{t \in \mathbb{Z}}$ should be Poisson.
The true value is given by 
\[
\bm{\alpha}_0 = (0.3,0.2,0.2,0.2,0,\ldots,0)^\top \in \mathbb{R}^p,\quad
\mu_\epsilon = 0.5.
\]
The scenario of the simulation study for 
the model \eqref{sim model uni} is given as follows.
\begin{description}
\item[Case 1.]
The order $p$ of the INAR model is $10$, and the number $n$ of 
the observation points is $1000$ or $2000$.
\item[Case 2.]
$p=20$, $n=1000$ or $n=2000$.
\end{description}
Moreover, let us consider the following 
$p$-dimensional time series:
\begin{equation}\label{sim model multi}
\bm{Y}_{t, j} | \mathcal{F}_{t-1}
\sim Poisson (\bm{\lambda}_{t, j}),\quad
\bm{\lambda}_t = \bm{\eta} + \bm{A} \bm{Y}_{t-1}.
\end{equation}
for every $j=1,\ldots,p$ 
and $\bm{A} \in \mathbb{R}^{p\times p}$ is a 
coefficient matrix whose elements are all nonnegative.
Let $X_t = \bm{Y}_{t, 1}$. 
Then, we have 
\[
X_t | \mathcal{F}_{t-1} \sim
Poisson({\bm{\lambda}_{t, 1}}),\quad
\bm{\lambda}_{t, 1} = \bm{\eta}_1 + \bm{\alpha}^\top \bm{Y}_{t-1},
\]
where $\bm{\alpha}$ is the first row of $\bm{A}$.
Note that this model can be considered as a special 
case of multivariate integer-valued time series of 
order 1;
\[
\bm{Y}_t = \bm{A} \circ \bm{Y}_{t-1} + \bm{\epsilon}_t,
\]
where 
$\bm{A} \circ$ is a multivariate 
reproduction operator
defined in \cite{latour1997multivariate} and 
$\{\bm{\epsilon}_t\}_{t \in \mathbb{Z}}$
is an i.i.d. innovation process.
If $\bm{\epsilon}_t$ and the counting sequences 
correspond to $\bm{A} \circ $ are independent 
Poisson sequences, 
the conditional distributions of 
each component should be also Poisson.

The true values are given by 
\[
\bm{A} = \diag(\bm{A}_1,\ldots,\bm{A}_q),\quad
\bm{A}_i = \left(\begin{array}{cccc}
0.3& 0.2& 0.2& 0.2  \\
0.2& 0.3& 0.2& 0.2  \\
0 & 0.2 & 0.3& 0.2 \\
0 & 0 & 0.2 & 0.3  \\
\end{array}
\right),\quad
i=1,\ldots,q,
\]
and
\[
\bm{\eta} = (0.5,\ldots,0.5)^\top.
\]
We consider the following settings:
\begin{description}
\item[Case 3.]
The dimension $p$ is $100$, 
and the number $n$ of the observation points is $1000$ or $2000$.
\item[Case 4.]
$p=200$, $n = 1000$ or $n=2000$.
\end{description}
Our aim is to estimate $\bm{\alpha}$
for each case.
We choose the tuning parameter $\lambda_n$
by a cross-validation method while we put the 
threshold $\tau_n=0.05$, 
which is determined by taking into account of 
Remark \ref{choice of threshold}.
To construct the Dantzig selector for 
$\bm{\alpha}$,
we first centralize the process, then, 
we apply the $\mathtt{gds}$ method in the package $\mathtt{hdme}$ in $\mathtt{R}$; see \cite{sorensen2019hdme}.
The two step estimator $\tilde{\bm{\alpha}}_n$ is constructed by 
weighted least squares method described 
in the previous subsection, 
where the intercept term $\eta$ (Cases 1 and 2) and $\eta_1$ (Cases 3 and 4) 
are estimated by least squares method 
after variable selection.

Tables \ref{sim c1 uni}-\ref{sim c2 multi} 
illustrate the 
average of $l_\infty$ and $l_2$ errors of the 
estimators, empirical probability of the success of the selection and the $p$-values of 
the Royston test in the package $\mathtt{MVN}$ in $\mathtt{R}$ (see \cite{korkmaz2014mvn, mecklin2005monte}), for the 
multivariate normality of the two step 
estimator restricted by the true support index set, over 500 replications.
For all cases, the Dantzig selector 
works well in terms of errors and selection.
Moreover, the two step estimator $\tilde{\bm{\alpha}}_{n}$ has smaller errors than $\hat{\bm{\alpha}}_n^{(1)}$
and seems to satisfy the normality when $n$ is large.

We further check the histograms of 
$\sqrt{n} \bm{u}^\top (\tilde{\bm{\alpha}}_n- \bm{\alpha}_0)$, 
$\bm{u} = \tilde{\bm{u}}/\|\tilde{\bm{u}}\|_2$, where each component of 
$\tilde{\bm{u}}$ is generated from 
the uniform distribution between $[-1,1]$ 
for each case.
Figure \ref{fig:hist} shows the result.
The normality seems to be satisfied, 
but some outliers are appeared.
We can confirm that the outlier occurred when the variable selection failed.
Therefore, the numerical results seem consistent with Theorem \ref{cramer-wold time series}.

\if0
The algorithm to estimate $\bm{\alpha}$ is as follows.
\begin{description}
\item[Step 1.]
By using the sample mean $\bar{X}_n$ of the 
observation, we obtain the centered variables
$\bm{Y}_t = X_t-\bar{X}_n$.
Then, we calculate the estimator $\hat{\bm{\alpha}}_n^{(1)}$ by applying the Dantzig selector 
for the linear regression model:
\[
Y = \bm{\alpha}^\top Z + u_t,
\]
where the tuning parameter is chosen by a
cross-validation method.
Here, we use $\mathtt{gds}$ method in the package $\mathtt{hdme}$ in $\mathtt{R}$; see \cite{sorensen2019hdme}.
Then, $\mu_\epsilon$ is estimated by 
$(1-\sum \hat{\bm{\alpha}}_{in}^{(1)})\bar{X}_n$.
\item[Step 2.]
Calculate the estimator $\hat{T}_n$
and the two step estimator $\tilde{\bm{\alpha}}_{n}$.
Here, we put
$\tau_n = 2 \sqrt{\log p/n}$ for the threshold, which is determined by 
taking into account of the rate of 
$\|\psi_n(\bm{\theta}_0)\|_\infty$.
\end{description}
\fi
\newpage
\begin{table}[!h]
\begin{center}
\small
\caption{Estimations for regression coefficients over $500$ replications based on the model \eqref{sim model uni}: Case 1.}
\begin{tabular}{c|cc}
 & $(n, p)=(1000,10)$ & $(n, p) = (2000,10)$ \\ \hline 
Mean of $\|\hat{\bm{\alpha}}_n^{(1)} - \bm{\alpha}_0\|_\infty$ & 0.061 & 0.043 \\ 
Mean of $\|\hat{\bm{\alpha}}_n^{(1)} - \bm{\alpha}_0\|_2$ & 0.085 & 0.060
 \\ 
Proportion of $\hat{T}_n = T_0$& 0.816 & 0.946 \\
Mean of $\|\tilde{\bm{\alpha}}_{n} - \bm{\alpha}_{0}\|_\infty$ & 0.047 & 0.033 \\
Mean of $\|\tilde{\bm{\alpha}}_{n} - \bm{\alpha}_{0}\|_2$ & 0.063 & 0.043 \\
Royston test
for $\tilde{\bm{\alpha}}_{nT_0}$ & $p<0.01$ &$p=0.135$ \\
\end{tabular}
\label{sim c1 uni}
\end{center}
\end{table}
\begin{table}[!h]
\begin{center}
\small
\caption{Estimations for regression coefficients over $500$ replications based on the model \eqref{sim model uni}: Case 2.}
\begin{tabular}{c|cc}
 & $(n, p)=(1000,20)$ & $(n, p) = (2000,20)$ \\ \hline 
Mean of $\|\hat{\bm{\alpha}}_n^{(1)} - \bm{\alpha}_0\|_\infty$ & 0.064 & 0.044 \\ 
Mean of $\|\hat{\bm{\alpha}}_n^{(1)} - \bm{\alpha}_0\|_2$ & 0.090 & 0.062
 \\ 
Proportion of $\hat{T}_n = T_0$& 0.822 & 0.946 \\
Mean of $\|\tilde{\bm{\alpha}}_{n} - \bm{\alpha}_{0}\|_\infty$ & 0.048 & 0.032 \\
Mean of $\|\tilde{\bm{\alpha}}_{n} - \bm{\alpha}_{0}\|_2$ & 0.064 & 0.043 \\
Royston test
for $\tilde{\bm{\alpha}}_{nT_0}$ & $p=0.15$ &$p=0.13$ \\
\end{tabular}
\label{sim c2 uni}
\end{center}
\end{table}

\begin{table}[!h]
\begin{center}
\small
\caption{Estimations for regression coefficients over $500$ replications based on the model \eqref{sim model multi}: Case 3.}
\begin{tabular}{c|cc}
 & $(n, p)=(1000,100)$ & $(n, p) = (2000,100)$ \\ \hline 
Mean of $\|\hat{\bm{\alpha}}_n^{(1)} - \bm{\alpha}_0\|_\infty$ & 0.083 & 
0.060 \\ 
Mean of $\|\hat{\bm{\alpha}}_n^{(1)} - \bm{\alpha}_0\|_2$ & 0.114 & 0.091
 \\ 
Proportion of $\hat{T}_n = T_0$& 0.940 & 0.990 \\
Mean of $\|\tilde{\bm{\alpha}}_{n} - \bm{\alpha}_{0}\|_\infty$ & 0.057 & 0.038 \\
Mean of $\|\tilde{\bm{\alpha}}_{n} - \bm{\alpha}_{0}\|_2$ & 0.073 & 0.048 \\
Royston test
for $\tilde{\bm{\alpha}}_{nT_0}$ & $p <0.01$ &$p=0.63$ \\
\end{tabular}
\label{sim c1 multi}
\end{center}
\end{table}
\begin{table}[!h]
\begin{center}
\small
\caption{Estimations for regression coefficients over $500$ replications based on the model \eqref{sim model multi}: Case 4.}
\begin{tabular}{c|cc}
 & $(n, p)=(1000,200)$ & $(n, p) = (2000,200)$ \\ \hline 
Mean of $\|\hat{\bm{\alpha}}_n^{(1)} - \bm{\alpha}_0\|_\infty$ & 0.083 & 
0.067 \\ 
Mean of $\|\hat{\bm{\alpha}}_n^{(1)} - \bm{\alpha}_0\|_2$ & 0.115 & 0.094
 \\ 
Proportion of $\hat{T}_n = T_0$& 0.910 & 0.968 \\
Mean of $\|\tilde{\bm{\alpha}}_{n} - \bm{\alpha}_{0}\|_\infty$ & 0.056 & 0.036 \\
Mean of $\|\tilde{\bm{\alpha}}_{n} - \bm{\alpha}_{0}\|_2$ & 0.070 & 0.046 \\
Royston test
for $\tilde{\bm{\alpha}}_{nT_0}$ & $p<0.01$ &$p<0.01$ \\
\end{tabular}
\label{sim c2 multi}
\end{center}
\end{table}
\newpage
\begin{figure}[!htbp]
  \begin{center}
   \includegraphics[width=120mm]{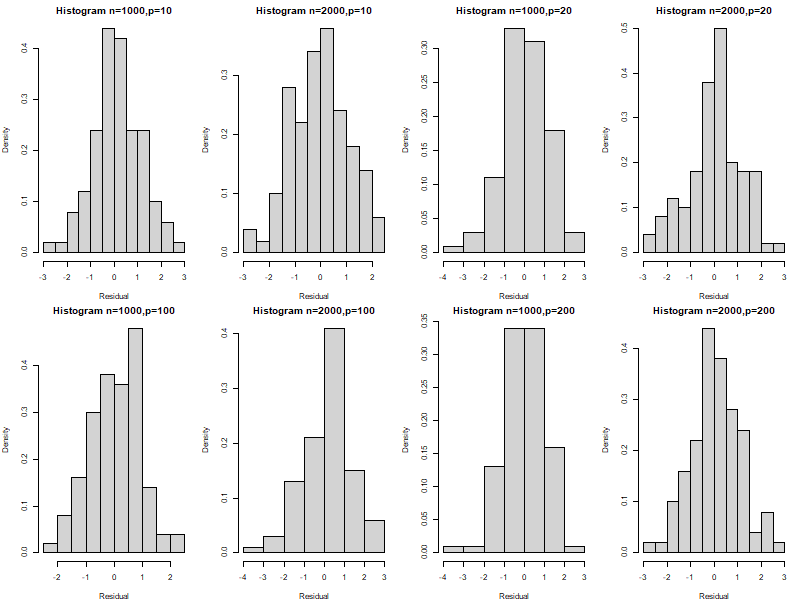}
  \end{center}
  \caption{Histograms of $\sqrt{n}\bm{u}^\top(\tilde{\bm{\alpha}}_n - \bm{\alpha}_0)$
  over 500 replications. The top four panels show the results of the Cases 1 and 2 for the model \eqref{sim model uni}, while the bottom four panels show the results of the Cases 3 and 4 for the model \eqref{sim model multi}.
}
  \label{fig:hist}
\end{figure}

\section{Applications to models of diffusion processes}\label{sec:diffusion}
\subsection{Diffusion processes with high-dimensional covariate}
Let us consider the following model of the stochastic differential equation:
\[
X_t = X_0 + \int_0^t \tilde{S}(\bm{\alpha}^\top \phi(X_s), \bm{\beta}^\top \bm{Z}_s) \dr s + \int_0^t \tilde{\sigma}(X_s, \bm{Z}_s; h) \dr W_s,\quad
t \geq 0,
\]
where $\{W_t\}_{t \geq 0}$ is a $1$-dimensional standard Brownian motion,
$\phi: \mathbb{R} \to \mathbb{R}^{p_1}$ is a measurable function,
$\{\bm{Z}_t\}_{t \geq 0}$ is a $p_2$-dimensional process, 
which is regarded as a covariate vector, 
$\bm{\alpha} \in \mathbb{R}^{p_1}$ is an 
autoregressive coefficient,
$\bm{\beta} \in \mathbb{R}^{p_2}$ is a regression coefficient for covariate process 
$\{\bm{Z}_t\}_{t \geq0}$,
$\tilde{S}$ is a measurable and twice continuously differentiable function with respect to 
$\bm{\alpha}$ and $\bm{\beta}$, and 
$\sigma$ is a measurable function.
We consider the estimation problem for 
$\bm{\theta} = (\bm{\alpha}^\top, \bm{\beta}^\top)^\top \in \mathbb{R}^{p}$, where $p= p_1 + p_2$, with the existence of a 
possibly infinite-dimensional nuisance parameter $h \in H$.
Let 
$(\bm{\theta}_0, h_0)$ be a true value of 
$(\bm{\theta}, h)$, ${\Theta} = {\Theta}_{\alpha} \times {\Theta}_{\beta} \subset \mathbb{R}^{p_1+p_2}$ 
a parameter space for $\bm{\theta}$ and $H$ a
metric space equipped with a metric $d_H$.
We observe the processes $\{X_t\}_{t \geq 0}$, 
$\{\bm{Y}_t\}_{t \geq 0}$
and $\{\bm{Z}_t\}_{t \geq 0}$
at $n+1$ equidistant time points 
$0=:t_0^n<t_1^n <\cdots<t_n^n$.
Put $T_{10}=\{j : {\alpha}_{0j} \not=0\}$, 
$T_{20} = \{p_1+j : {\beta}_{0j} \not=0\}$ and 
$T_0 = T_{10} \cup T_{20}$.
Assume that the true value $\bm{\theta}_0$ 
is high-dimensional and sparse, that is, 
$p$ possibly tends to $\infty$ as $n \to \infty$,
and the number $s=s_{1} + s_{2}$ of nonzero coefficients 
of $\bm{\theta}_0$ is relatively small, where 
$s_{1}$ is the sparsity for $\bm{\alpha}_0$ and 
$s_{2}$ is the sparsity for $\bm{\beta}_0$.
Changing the domain of $\tilde{S}$ and 
$\tilde{\sigma}$, 
we write 
\[
X_t = \int_0^t S(\bm{Y}_s; \bm{\theta}) ds
+ \int_0^t \sigma(\bm{Y}_s; \bm{\theta})
dW_s,
\]
where $\bm{Y}_s = (\phi(\bm{X}_s)^\top, \bm{Z}_s^\top)^\top$ for every $s \geq 0$.
Consider the following score function for 
$\bm{\theta}$:
\begin{eqnarray*}
\psi_n^{(1)}(\bm{\theta})
&=& \frac{1}{n \Delta_n} \sum_{k=1}^n \frac{\partial}{\partial \bm{\theta}}
S(\bm{Y}_{t_{k-1}^n}; \bm{\theta})
\{
X_{t_{k^n}}-X_{t_{k-1}^n} - S(\bm{Y}_{t_{k-1}^n}; \bm{\theta})
\Delta_n
\},
\end{eqnarray*}
where $\Delta_n = t_n^n/n=|t_k^n - t_{k-1}^n|,\ k=1,\ldots,n$.
Then, we define the following estimator for $\bm{\theta}_0$ and $T_0$, respectively;
\[
\hat{\bm{\theta}}_n^{(1)} := \arg \min_{\bm{\theta} \in \mathcal{C}_n} \|\bm{\theta}\|_1,\quad
\mathcal{C}_n := \{\bm{\theta} \in {\Theta} : \|\psi_n^{(1)}(\bm{\theta})\|_\infty \leq \lambda_n\},
\]
and 
\[
\hat{T}_n = \{j : |\hat{{\theta}}_{nj}^{(1)}| > \tau_n\},
\]
where $\tau_n$ is a threshold.

We estimate $\bm{\theta}$ based on the quasi-likelihood function given by 
\begin{eqnarray*}
L_n(\bm{\theta}, h)
&=& \prod_{i=1}^n \frac{1}{\sqrt{2 \pi \sigma^2(\bm{Y}_{t_{i-1}^n}; h)|t_i^n - t_{i-1}^n|}}
\exp\left(
- \frac{|X_{t_i^n}-X_{t_{i-1}^n}-
S(\bm{Y}_{t_{i-1}^n}; \bm{\theta})|t_i^n - t_{i-1}^n||^2}{2 \sigma^2(\bm{Y}_{t_{i-1}^n}; h)|t_i^n - t_{i-1}^n|}
\right).
\end{eqnarray*}
Let $\Psi_n(\bm{\theta}; h)$ be the derivative of 
$\log L_n(\bm{\theta}; h)/T_n$ with respect to $\bm{\theta}$, 
i.e., 
\begin{eqnarray*}
\Psi_n(\bm{\theta}, h)
&=& \frac{1}{n \Delta_n} \sum_{k=1}^n
\frac{\frac{\partial}{\partial \bm{\theta}}S(\bm{Y}_{t_{k-1}^n}; \bm{\theta})}{\sigma^2(\bm{Y}_{t_{k-1}^n}; \hat{h}_n)}
\{
X_{t_{k^n}}-X_{t_{k-1}^n} - S(\bm{Y}_{t_{k-1}^n}; \bm{\theta})\Delta_n
\}.
\end{eqnarray*}
Let $\bm{V}_n^{(1)}(\cdot)$ and $\bm{V}_n(\cdot, \cdot)$ be derivatives of 
$\psi_n^{(1)}$ and $\Psi_n$ with respect to $\bm{\theta}$, respectively, i.e., 
\begin{eqnarray*}
\bm{V}_n^{(1)}(\bm{\theta})
&=& \frac{1}{n \Delta_n} \sum_{k=1}^n \frac{\partial^2}{\partial \bm{\theta} \partial \bm{\theta}^\top}
S(\bm{Y}_{t_{k-1}^n}; \bm{\theta}) \{
X_{t_k^n} - X_{t_{k-1}^n}- S(\bm{Y}_{t_{k-1}^n}; \bm{\theta})\Delta_n
\}\\
&&- \frac{1}{n \Delta_n} \sum_{k=1}^n\frac{\partial}{\partial \bm{\theta}}S(\bm{Y}_{t_{k-1}^n}; \bm{\theta})\frac{\partial}{\partial \bm{\theta}^\top}S(\bm{Y}_{t_{k-1}^n}; \bm{\theta})\Delta_n
\end{eqnarray*}
and 
\begin{eqnarray*}
\bm{V}_n^{(2)}(\bm{\theta}, h)
&=& \frac{1}{n \Delta_n} \sum_{k=1}^n \frac{\frac{\partial^2}{\partial \bm{\theta} \partial \bm{\theta}^\top}
S(\bm{Y}_{t_{k-1}^n}; \bm{\theta})}{\sigma^2(\bm{Y}_{t_{k-1}^n}; h)} \{
X_{t_k^n} - X_{t_{k-1}^n}- S(\bm{Y}_{t_{k-1}^n}; \bm{\theta})\Delta_n
\}\\
&&- \frac{1}{n \Delta_n} \sum_{k=1}^n\frac{\frac{\partial}{\partial \bm{\theta}}S(\bm{Y}_{t_{k-1}^n}; \bm{\theta})\frac{\partial}{\partial \bm{\theta}^\top}S(\bm{Y}_{t_{k-1}^n}; \bm{\theta})}{\sigma^2(\bm{Y}_{t_{k-1}^n}; h)}\Delta_n
\end{eqnarray*}
Let $C_{T_0} := 
\{\bm{v} \in \mathbb{R}^p : \|\bm{v_}{T_0^c}\|_1 \leq \|\bm{v}_{T_0}\|_1\}$.
Hereafter, we consider the following conditions.
\begin{assumption}\label{regularity diffusion}
\begin{itemize}
\item[(i)]
For every $\bm{y}_1 = (\phi({x}_1)^\top, \bm{z}_1)$ and 
$\bm{y}_2 = (\phi({x}_2)^\top, \bm{z}_2)$, 
there exists a constant 
$L_S>0$ such that
\if0
\[\left|
S(\bm{\alpha}_0^\top \phi(x_1), \bm{\beta}_0^\top \bm{z}_1) - S(\bm{\alpha}_0^\top \phi(x_2), \bm{\beta}_0^\top \bm{z}_2)
\right|
\leq L_S s(|x_1-x_2|+\|\bm{z}_1 - \bm{z}_2\|_\infty).
\]
\fi
\[
\left|
S(\bm{y}_1; \bm{\theta}_0) - S(\bm{y}_2; \bm{\theta}_0)\right|
\leq L_S s(|x_1-x_2|+\|\bm{z}_1 - \bm{z}_2\|_\infty).
\]
\item[(ii)]
The function $S$ is twice continuously differentiable with respect to 
$\bm{\theta} = (\bm{\alpha}^\top,\bm{\beta}^\top)^\top$. 
Moreover, there exists a positive constant 
$K_{p}$ possibly depending on $p$ such that 
\[
\sup_{t \geq 0} \E\left[
|S(\bm{Y}_t; \bm{\theta}_0)|^4
\right]^{1/4} \leq K_{p},
\]
\[
\sup_{t \geq 0} \E\left[
\left\|\frac{\partial}{\partial \bm{\theta}}S(\bm{Y}_t; \bm{\theta}_0)\right\|_\infty^4
\right]^{1/4} \leq K_{p},
\]
\if0
\[
\sup_{t \geq 0} \E\left[
\left\|\frac{\partial}{\partial \bm{\alpha}}S(\bm{\alpha}_0^\top \phi(X_t), \bm{\beta}_0^\top \bm{Z}_t)\right\|_\infty^4
\right]^{1/4} \leq K_{p},
\]
\[
\sup_{t \geq 0} \E\left[
\left\|\frac{\partial}{\partial \bm{\beta}}S(\bm{\alpha}_0^\top \phi(X_t), \bm{\beta}_0^\top \bm{Z}_t)\right\|_\infty^4
\right]^{1/4} \leq K_{p}
\]
\fi
and 
\[
\sup_{t \geq 0} \E\left[ |\sigma(\bm{Y}_t; h_0)|^4
\right]^{1/4}
\leq K_p.
\]
\item[(iii)]
It holds that
\[
c:=\inf_{\bm{y} \in \mathbb{R}^{p}} \inf_{h \in H} \sigma^2(\bm{y}; h)>0.
\]
\if0
There exists a integrable function 
$\Lambda_\sigma$ and a constant $K_\sigma>0$ which satisfies that
\[
\sigma^2(x; h_1) - \sigma^2(x; h_2)
\leq \Lambda_\sigma(x) d_H({h}_1, h_2)
\]
for every $x \in \mathbb{R}$, $h_1, h_2 \in H$
and that 
\[
\sup_{t \geq 0} \E \left[
|\Lambda_\sigma(X_t)|^4
\right]^{1/4}
\leq K_\sigma.
\]
\fi
\item[(iv)]
The processes $\{X_t\}_{t \geq 0}$ and 
$\{\bm{Z}_t\}_{t \geq 0}$ satisfy that 
\[
\E\left[
\sup_{t \in [t_{i-1}^n, t_i^n] }|X_t-X_{t_{i-1}^n}|^k
\right] + \E\left[
\sup_{t \in [t_{i-1}^n, t_i^n] }\|\bm{Z}_t-\bm{Z}_{t_{i-1}^n}\|_\infty^k
\right]
\leq D_k \Delta_n^{k/2},
\]
where 
$D_k>0$ is a constant depending on $k$ and $p$.
\item[(v)]
It holds that 
\[
\Delta_n \asymp n^{-{\zeta}},\quad
\log p =o\left((n \Delta_n)^{1/2}\right)
\]
as $n \to \infty$, where $\zeta \in (1/2, 1)$.
Moreover, the tuning parameter 
${\lambda}_n$ is a sequence such that 
\[
\max\left\{\frac{K_p s\Delta_n^{1/2} D_2^{1/2}}{{\lambda}_n},
\frac{K_p^2}{{\lambda}_n}\sqrt{\frac{\log (1+p)}{n \Delta_n}}\right\} \to 0,\quad
n \to \infty.
\]
\item[(vi)]
Let $c_n := \|\psi_n^{(1)}(\hat{\bm{\theta}}_n^{(1)})-\psi_n^{(1)}(\bm{\theta}_0)\|_\infty$
and $\theta_{\min} := \inf_{j \in T_0} |\theta_{0j}|$.
Then, it holds that 
\[
\theta_{\min} > 4c_n/\delta.
\]
\end{itemize}
\end{assumption}
The condition for $\psi_n^{(1)}(\bm{\theta}_0)$
can be verified as follows.
\begin{prop}\label{prop gradient diffusion}
Under Assumption \ref{regularity diffusion}, it holds that
\begin{equation}\label{gradient diffusion}
\Prob(\|\psi_n^{(1)}(\bm{\theta}_0)\|_\infty > {\lambda}_n)
\to 0,\quad n \to \infty.
\end{equation}
\end{prop}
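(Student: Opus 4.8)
The plan is to decompose $\psi_n^{(1)}(\bm\theta_0)$ into the terminal value of a continuous martingale plus a discretization error coming from the drift, and to kill each piece by a separate application of Markov's inequality, using the moment bounds of Assumption~\ref{regularity diffusion} and the rate conditions in part~(v). Concretely, since $\Prob(\|\psi_n^{(1)}(\bm\theta_0)\|_\infty > \lambda_n) \le \Prob(\|M_n\|_\infty > \lambda_n/2) + \Prob(\|R_n\|_\infty > \lambda_n/2)$ for any additive split $\psi_n^{(1)}(\bm\theta_0) = M_n + R_n$, it suffices to show each of the two probabilities on the right tends to $0$.

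For the decomposition, I would use the SDE: for each $k$,
\[
X_{t_k^n}-X_{t_{k-1}^n} - S(\bm Y_{t_{k-1}^n};\bm\theta_0)\Delta_n
= \int_{t_{k-1}^n}^{t_k^n}\{S(\bm Y_s;\bm\theta_0) - S(\bm Y_{t_{k-1}^n};\bm\theta_0)\}\dr s
+ \int_{t_{k-1}^n}^{t_k^n}\sigma(\bm Y_s;h_0)\dr W_s .
\]
Plugging this in, $M_n$ collects the stochastic-integral terms and $R_n$ the drift-increment terms. Because $\tfrac{\partial}{\partial\bm\theta}S(\bm Y_{t_{k-1}^n};\bm\theta_0)$ is $\mathcal F_{t_{k-1}^n}$-measurable, $n\Delta_n M_n$ is the value at $t_n^n$ of the continuous vector martingale $t\mapsto \int_0^t g_n(s)\sigma(\bm Y_s;h_0)\dr W_s$, where $g_n(s):=\tfrac{\partial}{\partial\bm\theta}S(\bm Y_{t_{k-1}^n};\bm\theta_0)$ on $(t_{k-1}^n,t_k^n]$.

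For $R_n$ I would bound $\|R_n\|_\infty \le \tfrac{1}{n\Delta_n}\sum_k \|g_n(t_{k-1}^n)\|_\infty\!\int_{t_{k-1}^n}^{t_k^n}|S(\bm Y_s;\bm\theta_0)-S(\bm Y_{t_{k-1}^n};\bm\theta_0)|\dr s$, apply the Lipschitz bound of Assumption~\ref{regularity diffusion}(i) to get a factor $L_S s(|X_s-X_{t_{k-1}^n}|+\|\bm Z_s-\bm Z_{t_{k-1}^n}\|_\infty)$, then take expectations and use Cauchy--Schwarz together with the fourth-moment bound on $\partial_\theta S$ from (ii) and the increment bound (iv) with $k=2$; this yields $\E\|R_n\|_\infty \lesssim K_p s D_2^{1/2}\Delta_n^{1/2}$, so Markov's inequality and the rate condition in (v) give $\Prob(\|R_n\|_\infty > \lambda_n/2)\to 0$. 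For $M_n$, the $j$-th component of $n\Delta_n M_n$, call it $\widetilde M_{n,j}$, has predictable quadratic variation $\langle\widetilde M_{n,j}\rangle_{t_n^n}=\sum_k (\partial_{\theta_j}S(\bm Y_{t_{k-1}^n};\bm\theta_0))^2\!\int_{t_{k-1}^n}^{t_k^n}\sigma^2(\bm Y_s;h_0)\dr s$, hence $\max_j\langle\widetilde M_{n,j}\rangle_{t_n^n}\le \sum_k\|\partial_\theta S(\bm Y_{t_{k-1}^n};\bm\theta_0)\|_\infty^2\!\int_{t_{k-1}^n}^{t_k^n}\sigma^2(\bm Y_s;h_0)\dr s$, whose expectation is $\lesssim n\Delta_n K_p^4$ by Cauchy--Schwarz and the fourth-moment bounds in (ii). Applying the stochastic maximal inequality of \cite{nishiyama2021martingale} to the family $\{\widetilde M_{n,j}\}_{1\le j\le p}$ then gives $\E[\max_j|\widetilde M_{n,j}|]\lesssim \sqrt{\log(1+p)}\,\E[\max_j\langle\widetilde M_{n,j}\rangle_{t_n^n}]^{1/2}\lesssim K_p^2\sqrt{n\Delta_n\log(1+p)}$; dividing by $n\Delta_n$, $\E\|M_n\|_\infty\lesssim K_p^2\sqrt{\log(1+p)/(n\Delta_n)}$, and Markov's inequality with (v) gives $\Prob(\|M_n\|_\infty > \lambda_n/2)\to0$, completing the proof.

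The main obstacle is the $M_n$ bound: obtaining the sharp $\sqrt{\log(1+p)}$ dependence demanded by Assumption~\ref{regularity diffusion}(v) (rather than a crude $\sqrt p$ from a coordinatewise union bound) requires bounding $\max_j\langle\widetilde M_{n,j}\rangle_{t_n^n}$ directly through $\|\partial_\theta S\|_\infty$ before taking expectations, and verifying that the integrability hypotheses of the maximal inequality in \cite{nishiyama2021martingale} hold under the present assumptions; by comparison the control of $R_n$ is routine moment bookkeeping.
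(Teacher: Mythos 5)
Your proposal is correct and follows essentially the same route as the paper: the same split of $\psi_n^{(1)}(\bm{\theta}_0)$ into a drift-discretization term (the paper's $\bm{A}_n$, your $R_n$) bounded via the Lipschitz condition, Cauchy--Schwarz, and Assumption \ref{regularity diffusion}(iv), and a martingale term (the paper's $\bm{B}_n$, your $M_n$) controlled by the Nishiyama maximal inequality applied coordinatewise to the discrete martingale differences, yielding exactly the bounds $O(K_p s D_2^{1/2}\Delta_n^{1/2})$ and $O(K_p^2\sqrt{\log(1+p)/(n\Delta_n)})$ that the rate condition (v) is designed to absorb. The only cosmetic difference is that you split the probability into two halves of $\lambda_n$ before applying Markov, whereas the paper applies Markov once to the sum of the two expectations.
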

See Subsection \ref{subsec: proof of sec4} for the 
proof.
On the other hand, the condition for 
$\bm{V}_n^{(1)}$ strongly depends on the 
model structure, it is difficult to provide some sufficient conditions.
We thus introduce the following condition.
\begin{assumption}\label{diffusion matrix condition}
There exists a constant $\delta>0$ such that
\begin{equation}\label{matrix condition general diffusion}
\Prob\left(
\inf_{\bm{0} \not=\bm{v} \in C_{T_0}}\inf_{\bm{\theta} \in {\Theta}}
\frac{|\bm{v}^\top \bm{V}_n^{(1)}(\bm{\theta}) \bm{v}|}{\|\bm{v}_{T_0}\|_1 \| \bm{v} \|_\infty}>\delta
\right) \to 1,\quad 
n \to \infty.
\end{equation}
\end{assumption}
Note that the left-hand side of \eqref{matrix condition general diffusion} may depend on 
$n$, $p$ and $s$.
We ensure this condition for Ornstein--Uhlenbeck processes in Subsection 4.2.
The following theorem follows from Theorem \ref{rate of convergence DS general model} and Proposition \ref{prop gradient diffusion}.
\begin{thm}\label{DS error diffusion}
Let Assumptions \ref{regularity diffusion} and 
\ref{diffusion matrix condition} hold.
Then, 
it holds that
\begin{equation}\label{error bound diffusion}
\Prob\left(\|\hat{\bm{\theta}}_n^{(1)} - \bm{\theta}_0\|_\infty > \frac{2}{\delta}c_n\right)
\to 0,\quad
n \to \infty,
\end{equation}
where $c_n := \|\psi_n^{(1)}(\hat{\bm{\theta}}_n^{(1)})-\psi_n^{(1)}(\bm{\theta}_0)\|_\infty$.
Especially, it holds that 
\[
\|\hat{\bm{\theta}}_n^{(1)} - \bm{\theta}_0\|_\infty = O_p(\lambda_n),\quad n \to \infty.
\]
Moreover, if the threshold 
$\tau_n$ satisfies that 
$2 c_n / \delta< \tau_n < \theta_{\min}/2$, 
it holds that
\begin{equation}\label{selection diffusion}
\Prob(\hat{T}_n = T_0) \to 1,\quad n \to \infty.
\end{equation}
\end{thm}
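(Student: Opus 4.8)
The plan is to obtain Theorem~\ref{DS error diffusion} as a direct consequence of the abstract Theorem~\ref{rate of convergence DS general model}: I would verify that Assumptions~\ref{regularity diffusion} and~\ref{diffusion matrix condition} imply that Assumption~\ref{DS condition} holds for the diffusion-based score $\psi_n^{(1)}$, and then simply quote Theorem~\ref{rate of convergence DS general model}. Condition (i) of Assumption~\ref{DS condition}, namely $\Prob(\|\psi_n^{(1)}(\bm{\theta}_0)\|_\infty > \lambda_n)\to 0$, is exactly Proposition~\ref{prop gradient diffusion} (proved in Section~\ref{sec:proof}), so nothing new is needed there; condition (iii), $\theta_{\min}>4c_n/\delta$, is Assumption~\ref{regularity diffusion}-(vi) verbatim. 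Hence the only genuine work is condition (ii): producing a matrix $\bm{V}_n^{(1)}$ that satisfies the defining inequality~\eqref{psi-V} and for which $\Prob(F_\infty(T_0,\bm{V}_n^{(1)})>\delta)\to 1$.

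For condition (ii) I would use the integral form of the mean value theorem. Since $\bm{V}_n^{(1)}(\cdot)$ is by construction the $\bm{\theta}$-gradient of $\psi_n^{(1)}$, one has $\psi_n^{(1)}(\hat{\bm{\theta}}_n^{(1)}) - \psi_n^{(1)}(\bm{\theta}_0) = \bar{\bm{V}}_n^{(1)}(\bm{\theta}_0-\hat{\bm{\theta}}_n^{(1)})$ with $\bar{\bm{V}}_n^{(1)} := \int_0^1 \bm{V}_n^{(1)}(\bm{\theta}_0 + t(\hat{\bm{\theta}}_n^{(1)}-\bm{\theta}_0))\,\dr t$, so taking $\bm{V}_n^{(1)}:=\bar{\bm{V}}_n^{(1)}$ turns~\eqref{psi-V} into an equality at $\bm{v}=\hat{\bm{\theta}}_n^{(1)}-\bm{\theta}_0$. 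On the event appearing in~\eqref{matrix condition general diffusion}, $|\bm{v}^\top\bm{V}_n^{(1)}(\bm{\theta})\bm{v}|>\delta\|\bm{v}_{T_0}\|_1\|\bm{v}\|_\infty$ for every $\bm{\theta}\in\Theta$ and every $\bm{0}\neq\bm{v}\in C_{T_0}$; since the quadratic form is bounded away from zero uniformly in $\bm{\theta}$, the continuous map $t\mapsto\bm{v}^\top\bm{V}_n^{(1)}(\bm{\theta}_0+t\bm{v})\bm{v}$ cannot vanish and therefore keeps a constant sign on $[0,1]$, whence $|\bm{v}^\top\bar{\bm{V}}_n^{(1)}\bm{v}| = \int_0^1|\bm{v}^\top\bm{V}_n^{(1)}(\bm{\theta}_0+t\bm{v})\bm{v}|\,\dr t \ge \delta\|\bm{v}_{T_0}\|_1\|\bm{v}\|_\infty$. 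This gives $F_\infty(T_0,\bm{V}_n^{(1)})>\delta$ on an event of probability tending to one, i.e.\ condition (ii).

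With Assumption~\ref{DS condition} verified, Theorem~\ref{rate of convergence DS general model} delivers~\eqref{error bound diffusion} and, under $2c_n/\delta<\tau_n<\theta_{\min}/2$, the selection consistency~\eqref{selection diffusion}. For the $O_p(\lambda_n)$ rate one notes that on the event $\{\|\psi_n^{(1)}(\bm{\theta}_0)\|_\infty\le\lambda_n\}$ we have $\bm{\theta}_0\in\mathcal{C}_n$, so from $\hat{\bm{\theta}}_n^{(1)}\in\mathcal{C}_n$ and the triangle inequality $c_n\le\|\psi_n^{(1)}(\hat{\bm{\theta}}_n^{(1)})\|_\infty+\|\psi_n^{(1)}(\bm{\theta}_0)\|_\infty\le 2\lambda_n$; combining with~\eqref{error bound diffusion} yields $\|\hat{\bm{\theta}}_n^{(1)}-\bm{\theta}_0\|_\infty=O_p(\lambda_n)$. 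I expect the one delicate point to be the mean value step for the nonlinear score: ensuring the uniform lower bound from~\eqref{matrix condition general diffusion} transfers to the averaged matrix (the sign/absolute-value bookkeeping above, plus the fact that the segment joining $\bm{\theta}_0$ and $\hat{\bm{\theta}}_n^{(1)}$ lies in $\Theta$), and checking that the discretization-error term buried in $\bm{V}_n^{(1)}(\bm{\theta})$ does not destroy uniformity over $\bm{\theta}\in\Theta$; everything else is a mechanical appeal to the general theorem.
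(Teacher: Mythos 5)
Your proposal matches the paper's treatment: the paper gives no separate proof of Theorem~\ref{DS error diffusion}, stating only that it ``follows from Theorem~\ref{rate of convergence DS general model} and Proposition~\ref{prop gradient diffusion},'' i.e.\ exactly your plan of checking Assumption~\ref{DS condition}(i) via Proposition~\ref{prop gradient diffusion}, (ii) via Assumption~\ref{diffusion matrix condition}, and (iii) via Assumption~\ref{regularity diffusion}-(vi), then quoting the general theorem. Your mean-value/sign-constancy argument bridging the pointwise condition on $\bm{V}_n^{(1)}(\bm{\theta})$ to the secant inequality~\eqref{psi-V} (valid provided the segment from $\bm{\theta}_0$ to $\hat{\bm{\theta}}_n^{(1)}$ stays in $\Theta$, e.g.\ for convex $\Theta$) is a correct filling-in of a step the paper leaves implicit.
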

As for the nuisance parameter $h$, we 
should construct a 
consistent estimator $\hat{h}_n$.
As well as the previous section, 
we assume the following condition.
\begin{assumption}\label{sparsity variance diffusion}
The function $\sigma$ satisfies that 
\[
\sigma(\bm{y}; h)
= \sigma({\bm{y}}_{(T_0)}; h),\quad
\forall \bm{y} \in \mathbb{R}^p, h \in H.
\]
\end{assumption}
Under such condition, since 
the 
variable selection by $\hat{T}_n$ is possible, 
we construct consistent estimators 
by the classical methods for finite-dimensional cases.
When $H$ is a finite-dimensional space, see, e.g., \cite{yoshida1992estimation, kessler1997estimation}
while see, e.g., \cite{hoffmann2001estimating} for the case where $H$ is an infinite-dimensional space
for the consistent estimation.

Then, we define the estimator $\tilde{\bm{\theta}}_n$ for $\bm{\theta}$ as a solution to the following equations:
\[
\Psi_{n\hat{T}_n}(\bm{\theta}_{\hat{T}_n}, \hat{h}_n) = \bm{0},\quad
\bm{\theta}_{\hat{T}_n^c} = \bm{0},
\] 
where for every index set $T = T_1 \cup T_2$, 
\begin{eqnarray*}
\Psi_{n T}(\bm{\theta}_{T}, \hat{h}_n)
&=& \frac{1}{n \Delta_n} \sum_{k=1}^n
\frac{\frac{\partial}{\partial \bm{\theta}_T}S(\bm{Y}_{t_{k-1}^n (T)}; \bm{\theta}_{(T)})}{\sigma^2(\bm{Y}_{t_{k-1}^n (T)}; \hat{h}_n)}\cdot\{
X_{t_{k^n}}-X_{t_{k-1}^n} - S(\bm{Y}_{t_{k-1}^n (T)}; \bm{\theta}_{(T)})\Delta_n
\}.
\end{eqnarray*}
\if0
\begin{eqnarray*}
\Psi_{n T}(\bm{\theta}_{T}, \hat{h}_n)
&=& \frac{1}{n \Delta_n} \sum_{k=1}^n
\frac{\frac{\partial}{\partial \bm{\theta}_T}S(\bm{\alpha}_{T_1}^\top \phi_{T_1}(X_{t_{k-1}^n}), \bm{\beta}_{T_2}^\top \bm{Z}_{t_{k-1}^n T_2})}{\sigma^2(\bm{Y}_{t_{k-1}^n}; \hat{h}_n)}\\
&&\cdot\{
X_{t_{k^n}}-X_{t_{k-1}^n} - S(\bm{\alpha}_{T_1}^\top \phi_{T_1}(X_{t_{k-1}^n}), \bm{\beta}_{T_2}^\top \bm{Z}_{t_{k-1}^n T_2})\Delta_n
\}.
\end{eqnarray*}
\fi
Using $\hat{T}_n$, we can define the estimator
$\tilde{\bm{\theta}}_n$ as a solution to the following equations;
\[
\Psi_{n \hat{T}_n}(\bm{\theta}_{\hat{T}_n}, \hat{h}_n) = \bm{0},\quad
\bm{\theta}_{\hat{T}_n^c} = \bm{0}.
\]
\if0
where for every $T = T_1 \cup T_2$,
\begin{eqnarray*}
\Psi_{n T}(\bm{\theta}_{T}, h)
&=& \frac{1}{n \Delta_n} \sum_{k=1}^n \frac{\frac{\partial}{\partial \bm{\theta}_{T}}S(\bm{\alpha}_{T_1}^\top \phi(X_{t_{k-1}^n})_{T_1}, \bm{\beta}_{T_2}^\top \bm{Z}_{t_{k-1}^n T_2})}{\sigma^2(\bm{Y}_{t_{k-1}^n}; h)}\\
&&\cdot\left\{
X_{t_k^n} - X_{t_{k-1}^n} - S(\bm{\alpha}_{T_1}^\top \phi(X_{t_{k-1}^n})_{T_1}, \bm{\beta}_{T_2}^\top \bm{Z}_{t_{k-1}^n T_2})
\right\}.
\end{eqnarray*}
\fi
We assume the following conditions in order to establish the 
asymptotic behavior of the estimator $\tilde{\bm{\theta}}_n$.
\begin{assumption}\label{2nd estimator diffusion}
\begin{itemize}
\item[(i)]
The sparsity $s$ is independent of $n$ and $p$.
The parameter space ${\Theta}_{T_0} \subset \mathbb{R}^s$ is compact and the true value $\bm{\theta}_{0 T_0}$ is an interior point of ${\Theta}_{T_0}$.
There exists a measurable function 
$\Lambda_1$ on $\mathbb{R} \times \mathbb{R}^{s_2}$ such that 
\if0
\begin{eqnarray*}
\lefteqn{
S(\bm{\alpha}_{T_{10}}^\top \phi(x)_{T_{10}}, \bm{\beta}_{T_{20}}^\top \bm{z}_{T_{20}}) - 
S(\bm{\alpha}_{0 T_{10}}^\top \phi(x)_{T_{10}}, \bm{\beta}_{0 T_{20}}^\top \bm{z}_{T_{20}})}\\
&=& \frac{\partial}{\partial \bm{\theta}_{T_0}}S(\bm{\alpha}_{0 T_{10}}^\top \phi(x)_{T_{10}}, \bm{\beta}_{0 T_{20}}^\top \bm{z}_{T_{20}} )
(\bm{\theta}_{T_0} - \bm{\theta}_{0 T_0})
+ \Lambda_1(\tilde{\bm{y}} ) \epsilon(x, \bm{z}_{T_{20}}; \bm{\theta}_{T_0}, \bm{\theta}_{0 T_0})
\end{eqnarray*}
\fi
\begin{eqnarray*}
\lefteqn{
S(\bm{y}_{(T_0)};\bm{\theta}) - 
S(\bm{y}_{(T_0)};\bm{\theta}_0)}\\
&=& \frac{\partial}{\partial \bm{\theta}_{T_0}}S(\bm{y}_{(T_0)};\bm{\theta}_0)
(\bm{\theta}_{T_0} - \bm{\theta}_{0 T_0})
+ \Lambda_1({\bm{y}}_{T_0} ) \epsilon(x, \bm{z}_{T_{20}}; \bm{\theta}_{T_0}, \bm{\theta}_{0 T_0})
\end{eqnarray*}
for every 
$\bm{y} = (\phi(x)^\top, \bm{z})$
and $\bm{\theta} \in {\Theta}$ where $\epsilon$ is a 
measurable function satisfying that 
\[
\sup_{x \in \mathbb{R}, \bm{z}_{T_{20}} \in \mathbb{R}^{s_2}}
|\epsilon(x, \bm{z}_{T_{20}}; \bm{\theta}_{T_0}, \bm{\theta}_{0 T_0})|
= o(\|\bm{\theta}_{T_0}-\bm{\theta}_{0 T_0}\|_2),\quad
\bm{\theta}_{T_0} \to \bm{\theta}_{0 T_0}.
\]
\item[(ii)]
There exists a constant $K_{T_0}$ such that 
\[
\sup_{\bm{\theta}_{T_0} \in {\Theta}_{T_0}}
|S(\bm{y}_{(1T_0}; \bm{\theta}_{(T_0)})
-S(\bm{y}_{(2T_0}; \bm{\theta}_{(T_0)})|
\leq K_{T_0} \|{\bm{y}}_{1T_0}- {\bm{y}}_{2T_0}\|_2;
\]
\[
\sup_{\bm{\theta}_{T_0} \in {\Theta}_{T_0}}
\left\|\frac{\partial}{\partial \bm{\theta}_{T_0}}S(\bm{y}_{(1T_0}; \bm{\theta}_{(T_0)})
-\frac{\partial}{\partial \bm{\theta}_{T_0}}S(\bm{y}_{(2T_0}; \bm{\theta}_{(T_0)})\right\|
\leq K_{T_0} \| \bm{y}_{1T_0} - \bm{y}_{2T_0}\|_2;
\]
\[
\sup_{h \in H}
|\sigma^2(\bm{y}_{1(T_0)}; h) - \sigma^2(\bm{y}_{2(T_0)}; h)|
\leq K_{T_0}\|\bm{y}_{1T_0} - \bm{y}_{2T_0}\|_2.
\]
\item[(iii)]
There exists a measurable function $\Lambda_2$ such that 
\begin{eqnarray*}
\lefteqn{
\left\|\frac{\partial}{\partial \bm{\theta}_{T_0}}S(\bm{y}_{(T_0)}; \bm{\theta}_{1(T_0)})
- \frac{\partial}{\partial \bm{\theta}_{T_0}}S(\bm{y}_{(T_0)}; \bm{\theta}_{2(T_0)})\right\|_2}\\
&\leq& \Lambda_2(\bm{y}_{T_0})\|\bm{\theta}_{1 T_0} - \bm{\theta}_{2 T_0}\|_2,\quad
\forall \bm{\theta}_{1 T_0}, \bm{\theta}_{2 T_0} \in {\Theta}_{T_0};
\end{eqnarray*}
\[
|\sigma^2(\bm{y}_{(T_0)}; h_1)-\sigma^2(\bm{y}_{(T_0)}; h_2)|
\leq \Lambda_2(\bm{y}_{T_{0}})d_H(h_1, h_2),\quad
\forall h_1, h_2 \in H ;
\]
for every $\bm{y} = (\phi(x)^\top,\bm{z}^\top)^\top$.
\item[(iv)]
It holds that 
\[
\sup_{t \geq 0}
\E\left[
|\Lambda_1(\bm{Y}_{t T_0})|^8 +  |\Lambda_2(\bm{Y}_{t T_0})|^8 +|X_t|^4
\right] < \infty.
\]
\item[(v)]
The process $\{\bm{Y}_{t T_0}\}_{t \geq 0}$
is ergodic with an invariant measure $\mu_0$, 
i.e., for every $\mu_0$-integrable function $g$, 
it holds that 
\[
\frac{1}{T} \int_0^T g(\bm{Y}_{t T_0}) \dr t
\to^p \int_{\mathbb{R}^{s}}
g(\bm{y}_{T_{0}}) \mu_0(\dr \bm{y}_{T_{0}}),\quad
T \to \infty.
\]
\item[(vi)]
The following $s \times s$ matrix $\bm{\mathcal{I}}(\bm{\theta}_0, h_0)$ is positive definite:
\[
\bm{\mathcal{I}}(\bm{\theta}_0, h_0)
= \int_{\mathbb{R}^{s}}
\frac{\frac{\partial}{\partial \bm{\theta}_{T_0}}S(\bm{y}_{(T_0)}; \bm{\theta}_{0 T_0}) \frac{\partial}{\partial \bm{\theta}_{T_0}^\top} S(\bm{y}_{(T_0)}; \bm{\theta}_{0 T_0})}{\sigma^2(\bm{y}_{(T_0)}; h_0)} \mu_0(\dr \bm{y}_{T_0}).
\]
\item[(vii)]
It holds that 
\begin{eqnarray*}
\inf_{\bm{\theta}_{T_0}: \|\bm{\theta}_{T_0}-\bm{\theta}_{0 T_0}\|_2>\epsilon}
\left\|
\int_{\mathbb{R} \times \mathbb{R}^{s_2}}
\frac{\frac{\partial}{\partial \bm{\theta}_{T_0}}S(\bm{y}_{(T_0)}; \bm{\theta}_{(T_0)})}{\sigma^2(\bm{y}_{(T_0)}; h_0)}
\left[
S(\bm{y}_{(T_0)};\bm{\theta}_{(T_0)})
-S(\bm{y}_{(T_0)};\bm{\theta}_{0(T_0)})
\right]\mu(\dr \bm{y}_{T_{0}})
\right\|_2
> 0
\end{eqnarray*}
for every $\epsilon >0$.
\item[(viii)]
The metric entropy condition for $(H, d_H)$ is satisfied:
\[
\int_0^1 \sqrt{\log N(H, d_H, \epsilon)} \dr \epsilon < \infty.
\]
\item[(ix)]
The estimator $\hat{h}_n$ is a consistent estimator of 
$h_0$, i.e., 
$d_H(\hat{h}_n, h_0) = o_p(1)$ as $n \to \infty$.
\end{itemize}
\end{assumption}
Note that Assumption \ref{2nd estimator diffusion} 
corresponds to conditions A1-A8 of \cite{nishiyama2009asymptotic}.
The sufficient condition for (ix) is described in 
Theorem 4.4 of \cite{nishiyama2009asymptotic}
when $\hat{h}_n$ is a minimizer of
\begin{eqnarray*}
\mathcal{A}_n(h)
= \frac{1}{n} \sum_{k=1}^n \left|
\frac{|X_{t_{k}^n} - X_{t_{k-1}^n}|^2}{\Delta_n} - \sigma^2(\bm{Y}_{t_{k-1}^n}; h)
\right|^2.
\end{eqnarray*}
Finally, we obtain the following results.
\begin{lem}\label{normality diffusion}
Under Assumptions \ref{regularity diffusion}, 
\ref{diffusion matrix condition}, 
\ref{sparsity variance diffusion}, and \ref{2nd estimator diffusion},
it holds that 
\[
\sqrt{n \Delta_n} \left(\tilde{\bm{\theta}}_{n \hat{T}_n} - \bm{\theta}_{0 T_0}\right) 1_{\left\{\hat{T}_n = T_0\right\}}
\to^d \mathcal{N}_s(0, \bm{\mathcal{I}}(\bm{\theta}_0, h_0)^{-1}),\quad
n \to \infty.
\]
\end{lem}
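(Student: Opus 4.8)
The plan is to follow the two-step scheme used for Lemmas \ref{normality general model} and \ref{normality time series}: reduce, via the selection consistency of Theorem \ref{DS error diffusion}, to the asymptotic normality of the restricted $Z$-estimator solving $\Psi_{nT_0}(\bm{\theta}_{T_0},\hat h_n)=\bm 0$; verify that Assumption \ref{2nd estimator diffusion}, which is tailored to be conditions A1--A8 of \cite{nishiyama2009asymptotic}, permits an application of the general $Z$-estimation theorem of \cite{nishiyama2009asymptotic} with normalizing rate $r_n=\sqrt{n\Delta_n}$; and then conclude by Slutsky's lemma.

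First I would carry out the reduction. Since $2c_n/\delta<\tau_n<\theta_{\min}/2$ by hypothesis and Assumptions \ref{regularity diffusion} and \ref{diffusion matrix condition} hold, Theorem \ref{DS error diffusion} gives $\Prob(\hat T_n=T_0)\to1$, hence $1_{\{\hat T_n=T_0\}}\to^p1$; and on $\{\hat T_n=T_0\}$ the estimator $\tilde{\bm{\theta}}_{n\hat T_n}$ coincides with $\tilde{\bm{\theta}}_{nT_0}$, the solution of $\Psi_{nT_0}(\bm{\theta}_{T_0},\hat h_n)=\bm 0$, $\tilde{\bm{\theta}}_{nT_0^c}=\bm 0$. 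It therefore suffices to prove $\sqrt{n\Delta_n}(\tilde{\bm{\theta}}_{nT_0}-\bm{\theta}_{0T_0})\to^d\mathcal N_s(\bm 0,\mathcal I(\bm{\theta}_0,h_0)^{-1})$, and then multiply by $1_{\{\hat T_n=T_0\}}$ and invoke Slutsky's lemma.

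Next I would verify the hypotheses of the $Z$-estimation theorem. Three ingredients are required. (a) Weak convergence of the compensated score: writing $\sqrt{n\Delta_n}\{\Psi_{nT_0}(\bm{\theta}_{T_0},h)-\tilde\Psi_{nT_0}(\bm{\theta}_{T_0},h)\}$ as a martingale-difference sum plus a discretization remainder, the martingale part converges, by a martingale central limit theorem, to a centered Gaussian random field $G_{T_0}$ with $\mathrm{Cov}\,G_{T_0}(\bm{\theta}_{0T_0},h_0)=\mathcal I(\bm{\theta}_0,h_0)$, with tightness over $(\bm{\theta}_{T_0},h)$ supplied by the maximal inequality of \cite{nishiyama2021martingale} (cf.\ Proposition 4.5 of \cite{nishiyama2000weak} as used in Lemma \ref{jain-marcus}), using the fourth-moment bounds of Assumption \ref{regularity diffusion}(ii), the ellipticity $\inf\sigma^2>0$ of (iii), and the metric-entropy condition \ref{2nd estimator diffusion}(viii). (b) The compensator expansion
\[
\tilde\Psi_{nT_0}(\bm{\theta}_{nT_0},h_n)-\tilde\Psi_{nT_0}(\bm{\theta}_{0T_0},h_0)=-\mathcal I(\bm{\theta}_0,h_0)(\bm{\theta}_{nT_0}-\bm{\theta}_{0T_0})+o_p\bigl(\|\bm{\theta}_{nT_0}-\bm{\theta}_{0T_0}\|_2+(n\Delta_n)^{-1/2}\bigr)
\]
for any sequence with $\|\bm{\theta}_{nT_0}-\bm{\theta}_{0T_0}\|_2\lor d_H(h_n,h_0)=o_p(1)$, which follows from ergodicity (Assumption \ref{2nd estimator diffusion}(v)) together with the Lipschitz/differentiability bounds (i)--(iii) and the moment bound (iv), exactly as in Lemma \ref{jain-marcus}(ii). (c) Consistency $\|\tilde{\bm{\theta}}_{nT_0}-\bm{\theta}_{0T_0}\|_2=o_p(1)$, which follows from the identifiability condition \ref{2nd estimator diffusion}(vii) and the consistency $d_H(\hat h_n,h_0)=o_p(1)$ of (ix) by a standard argmin/argzero argument.

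The hard part will be controlling the discretization error, which is the only feature genuinely new relative to the time-series case. Because $\Psi_{nT_0}(\bm{\theta}_{0T_0},h_0)$ is built from the discretized quasi-likelihood it is not an exact martingale: each summand carries a bias from replacing $\int_{t_{k-1}^n}^{t_k^n}S(\bm{Y}_s;\bm{\theta}_0)\,ds$ by $S(\bm{Y}_{t_{k-1}^n};\bm{\theta}_0)\Delta_n$. Using the Lipschitz bound \ref{regularity diffusion}(i) and the increment estimate \ref{regularity diffusion}(iv), this bias is $O(\Delta_n^{3/2})$ in $L^2$ per step, so a crude bound gives a total contribution of order $\sqrt{n}\,\Delta_n$ after dividing by $n\Delta_n$ and multiplying by $\sqrt{n\Delta_n}$, and this vanishes precisely because $\Delta_n\asymp n^{-\zeta}$ with $\zeta>1/2$ (Assumption \ref{regularity diffusion}(v)), which forces $n\Delta_n^2\to0$; the remaining martingale part of the discretization remainder and the stochastic-integral contributions are negligible by the same maximal inequality under $\log p=o((n\Delta_n)^{1/2})$ and $n\Delta_n\to\infty$. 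Once A1--A8 are verified in this way, Theorem 2.1 of \cite{nishiyama2009asymptotic} (with $r_n=\sqrt{n\Delta_n}$), equivalently its specialization to diffusions in Section 4 of that paper, yields $\sqrt{n\Delta_n}(\tilde{\bm{\theta}}_{nT_0}-\bm{\theta}_{0T_0})\to^d\mathcal N_s(\bm 0,\mathcal I(\bm{\theta}_0,h_0)^{-1})$, and the reduction of the first step completes the proof.
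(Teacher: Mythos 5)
Your proposal is correct and follows essentially the same route as the paper: reduce via the selection consistency of Theorem \ref{DS error diffusion} to the restricted estimator $\tilde{\bm{\theta}}_{nT_0}$, obtain its asymptotic normality from the $Z$-estimation theory of \cite{nishiyama2009asymptotic} (the paper cites Lemma 4.1 and Theorem 4.3 there, i.e.\ the diffusion specialization you mention), and conclude by Slutsky as in Theorem \ref{normality general model}. The only difference is that you spell out the verification of conditions A1--A8 and the discretization-error control, which the paper delegates entirely to Assumption \ref{2nd estimator diffusion} and the cited reference.
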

\begin{proof}
It follows from Lemma 4.1 and Theorem 4.3 
of \cite{nishiyama2009asymptotic} that 
\[
\sqrt{n\Delta_n} \left(\tilde{\bm{\theta}}_{n {T_0}} - \bm{\theta}_{0 T_0}\right) \to^d \mathcal{N}_s(0, \bm{\mathcal{I}}(\bm{\theta}_0, h_0)^{-1}),\quad
n \to \infty.
\]
Combining this fact with Theorem
\ref{DS error diffusion}, we obtain the conclusion
as well as Theorem \ref{normality general model}.
\end{proof}
The weak convergence of the proposed
estimator in  the Hilbert space $\ell^2$ 
can be shown as well as Section \ref{sec:general}.
Consider the $\ell^2$-valued random sequence 
$\{\bm{\mathcal{R}}_n\}_{n \in \mathbb{N}}$ given by 
\[
\langle \bm{e}_j, \bm{\mathcal{R}}_n\rangle
= \begin{cases}
\sqrt{n \Delta_n} (\tilde{\bm{\theta}}_{nj} - \bm{\theta}_{0j} ) & 
1 \leq j \leq p \\
0 & j > p
\end{cases}.
\] 
\begin{lem}\label{cramer-wold diffusion}
Under the same assumptions as Lemma \ref{normality diffusion}, 
it holds that 
\[
\langle \bm{u}, \bm{\mathcal{R}_n} \rangle
\to^d \mathcal{N}(0, \bm{u}_{T_0}^\top 
\bm{I}(\bm{\theta}_0, h_0)^{-1}\bm{u}_{T_0}),\quad
n \to \infty,
\]
for every $\bm{u} \in \ell^2$.
\end{lem}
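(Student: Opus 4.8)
The plan is to repeat, essentially verbatim, the argument that proves Lemma~\ref{cramer-wold general} (and, in the time series setting, Lemma~\ref{cramer-wold time series}), replacing the normalizing rate $\sqrt{n}$ by $\sqrt{n\Delta_n}$ and letting Lemma~\ref{normality diffusion} and the selection consistency \eqref{selection diffusion} play the roles that Lemma~\ref{normality general model} and \eqref{selection general} played there. Fix $\bm{u}\in\ell^2$. First I would split the inner product according to the data-driven index set $\hat{T}_n$,
\[
\langle \bm{u}, \bm{\mathcal{R}}_n\rangle
= \sqrt{n\Delta_n}\,\bm{u}_{\hat{T}_n}^\top(\tilde{\bm{\theta}}_n-\bm{\theta}_0)_{\hat{T}_n}
+ \sqrt{n\Delta_n}\,\bm{u}_{\hat{T}_n^c}^\top(\tilde{\bm{\theta}}_n-\bm{\theta}_0)_{\hat{T}_n^c},
\]
and treat the two terms separately.

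For the tail term, note that $\tilde{\bm{\theta}}_{n\hat{T}_n^c}=\bm{0}$ by construction, so it equals $-\sqrt{n\Delta_n}\,\bm{u}_{\hat{T}_n^c}^\top\bm{\theta}_{0\hat{T}_n^c}$; decomposing over $\{\hat{T}_n=T_0\}$ and its complement and using $\bm{\theta}_{0T_0^c}=\bm{0}$ annihilates the contribution on $\{\hat{T}_n=T_0\}$, leaving only $-\sqrt{n\Delta_n}\,\bm{u}_{\hat{T}_n^c}^\top\bm{\theta}_{0\hat{T}_n^c}\,1_{\{\hat{T}_n\neq T_0\}}$. Since for every $\eta>0$ and all large $n$ one has $\Prob(\sqrt{n\Delta_n}\,1_{\{\hat{T}_n\neq T_0\}}>\eta)=\Prob(\hat{T}_n\neq T_0)\to 0$ by \eqref{selection diffusion}, this term is $o_p(1)$. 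Applying the same decomposition to the first term then gives
\[
\langle \bm{u}, \bm{\mathcal{R}}_n\rangle
= \sqrt{n\Delta_n}\,\bm{u}_{T_0}^\top(\tilde{\bm{\theta}}_n-\bm{\theta}_0)_{T_0}\,1_{\{\hat{T}_n=T_0\}} + o_p(1).
\]

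Finally I would invoke Lemma~\ref{normality diffusion}, which yields $\sqrt{n\Delta_n}(\tilde{\bm{\theta}}_{n\hat{T}_n}-\bm{\theta}_{0T_0})1_{\{\hat{T}_n=T_0\}}\to^d \mathcal{N}_s(\bm{0},\bm{\mathcal{I}}(\bm{\theta}_0,h_0)^{-1})$, and compose with the continuous linear functional $\bm{x}\mapsto\bm{u}_{T_0}^\top\bm{x}$, using Slutsky's lemma to absorb the $o_p(1)$; this delivers $\langle\bm{u},\bm{\mathcal{R}}_n\rangle\to^d\mathcal{N}(0,\bm{u}_{T_0}^\top\bm{\mathcal{I}}(\bm{\theta}_0,h_0)^{-1}\bm{u}_{T_0})$, which is the assertion (with $\bm{I}(\bm{\theta}_0,h_0)$ read as $\bm{\mathcal{I}}(\bm{\theta}_0,h_0)$). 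I do not expect any genuine obstacle: all probabilistic content has already been pushed into Lemma~\ref{normality diffusion} and Theorem~\ref{DS error diffusion}, and the only point requiring a little care is to perform every identification $\bm{u}_{\hat{T}_n}\equiv\bm{u}_{T_0}$ strictly inside the event $\{\hat{T}_n=T_0\}$, whose complement has probability tending to zero quickly enough to kill the $\sqrt{n\Delta_n}$ prefactor because the relevant indicator is bounded by one. As this is entirely parallel to the proofs of Lemmas~\ref{cramer-wold general} and \ref{cramer-wold time series}, the write-up is essentially a line-by-line copy with the obvious substitutions, so one could even simply state that the proof proceeds as for Lemma~\ref{cramer-wold general}.
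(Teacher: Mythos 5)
Your proposal is correct and matches the paper's intent exactly: the paper explicitly omits the proof of Lemma \ref{cramer-wold diffusion}, stating that it proceeds as in Lemma \ref{cramer-wold general}, and your argument is precisely that proof with $\sqrt{n}$ replaced by $\sqrt{n\Delta_n}$ and with Lemma \ref{normality diffusion} and \eqref{selection diffusion} in place of Lemma \ref{normality general model} and \eqref{selection general}. Your observation that $\bm{I}(\bm{\theta}_0,h_0)$ in the statement should be read as $\bm{\mathcal{I}}(\bm{\theta}_0,h_0)$ is also the right reading of the paper's notation.
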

\begin{thm}\label{weak convergence l2 diffusion}
Under the same assumptions as Lemma \ref{normality diffusion}, it holds that 
\[
\bm{\mathcal{R}}_n \to^d \bm{\mathcal{R}},\quad n \to \infty \quad
\mbox{in}\ \ell^2,
\]
where 
$\bm{\mathcal{R}}$ is a centered Gaussian random field whose 
distribution is determined by 
the distribution of its marginals 
\[
\langle \bm{u}, \bm{\mathcal{R}} \rangle \sim
\mathcal{N}(0, \bm{u}_{T_0}^\top\bm{I}(\bm{\theta}_0, h_0)^{-1}\bm{u}_{T_0})
\]
for every $\bm{u} \in \ell^2$.
\end{thm}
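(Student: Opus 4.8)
The plan is to reproduce, essentially line by line, the argument used for Theorem~\ref{weak convergence l2 general}, replacing the normalizing rate $\sqrt{n}$ by the discrete-sampling rate $\sqrt{n\Delta_n}$ and invoking the diffusion-specific selection consistency \eqref{selection diffusion} together with Lemma~\ref{cramer-wold diffusion} in place of their Section~\ref{sec:general} counterparts. Two ingredients are needed: that the $\ell^2$-valued sequence $\{\bm{\mathcal{R}}_n\}$ is asymptotically finite-dimensional, and that every one-dimensional marginal $\langle\bm{u},\bm{\mathcal{R}}_n\rangle$ converges in distribution to the corresponding marginal of $\bm{\mathcal{R}}$. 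The statement is then a direct application of Theorem~1.8.4 of \cite{vanwellner1996}.

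First I would establish asymptotic finite-dimensionality. Since $\bm{\theta}_{0 T_0^c}=\bm{0}$ and, by the definition of $\tilde{\bm{\theta}}_n$, one has $\tilde{\bm{\theta}}_{n\hat{T}_n^c}=\bm{0}$, it follows that for every $\delta>0$,
\[
\Prob\left(\|\sqrt{n\Delta_n}\,(\tilde{\bm{\theta}}_{n}-\bm{\theta}_0)_{T_0^c}\|_2^2>\delta\right)
\le \Prob\left(n\Delta_n\|\tilde{\bm{\theta}}_{n\hat{T}_n^c}\|_2^2>\delta,\ \hat{T}_n=T_0\right)+\Prob(\hat{T}_n\neq T_0).
\]
On the event $\{\hat{T}_n=T_0\}$ we have $T_0^c=\hat{T}_n^c$, so the first probability on the right is $0$, while the second tends to $0$ by \eqref{selection diffusion}. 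Hence the $\ell^2$-mass of $\bm{\mathcal{R}}_n$ carried by the coordinates outside the fixed finite block indexed by $T_0$ is asymptotically negligible, i.e.\ $\{\bm{\mathcal{R}}_n\}$ is asymptotically finite-dimensional in the sense of Section~1.8 of \cite{vanwellner1996}.

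Second, Lemma~\ref{cramer-wold diffusion} supplies the marginal convergence $\langle\bm{u},\bm{\mathcal{R}}_n\rangle\to^d\mathcal{N}(0,\bm{u}_{T_0}^\top\bm{I}(\bm{\theta}_0,h_0)^{-1}\bm{u}_{T_0})$ for every $\bm{u}\in\ell^2$; combining this with the previous step, Theorem~1.8.4 of \cite{vanwellner1996} yields $\bm{\mathcal{R}}_n\to^d\bm{\mathcal{R}}$ in $\ell^2$, where $\bm{\mathcal{R}}$ is the centered Gaussian element with the stated marginals. The only substantive work lies in Lemma~\ref{cramer-wold diffusion} itself, which I would prove exactly as Lemma~\ref{cramer-wold general}: decompose $\langle\bm{u},\bm{\mathcal{R}}_n\rangle$ into its $\hat{T}_n$- and $\hat{T}_n^c$-parts, note that the second part equals $-\sqrt{n\Delta_n}\,\bm{u}_{\hat{T}_n^c}^\top\bm{\theta}_{0\hat{T}_n^c}$, which vanishes on $\{\hat{T}_n=T_0\}$ because $\bm{\theta}_{0 T_0^c}=\bm{0}$ and is $o_p(1)$ overall since $\Prob(\hat{T}_n\neq T_0)\to0$ forces $\sqrt{n\Delta_n}\,1_{\{\hat{T}_n\neq T_0\}}=o_p(1)$, and then apply the finite-dimensional asymptotic normality of Lemma~\ref{normality diffusion} together with Slutsky's lemma on $\{\hat{T}_n=T_0\}$. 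I expect the main (and still minor) obstacle to be the bookkeeping with the rate $\sqrt{n\Delta_n}$ in the discrete-observation setting — in particular confirming that the remainders in $\Psi_{n\hat{T}_n}$ and those coming from plugging in $\hat{h}_n$ stay $o_p(1)$ after multiplication by $\sqrt{n\Delta_n}$ — but this is precisely what Lemma~\ref{normality diffusion} and Theorem~\ref{DS error diffusion} already guarantee, so no new ideas are required.
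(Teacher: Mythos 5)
Your proposal is correct and follows exactly the route the paper intends: the authors explicitly omit this proof, stating that it is proved "as well as" Theorem \ref{weak convergence l2 general}, i.e., by establishing asymptotic finite-dimensionality via the selection consistency \eqref{selection diffusion}, invoking the marginal convergence of Lemma \ref{cramer-wold diffusion}, and applying Theorem 1.8.4 of \cite{vanwellner1996}. Your substitution of the rate $\sqrt{n\Delta_n}$ for $\sqrt{n}$ and your handling of the $\hat{T}_n^c$-block are precisely the adaptations required.
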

Since Lemma \ref{cramer-wold diffusion}
and Theorem \ref{weak convergence l2 diffusion} can be proved as well as 
Lemma \ref{cramer-wold general}
and Theorem 
\ref{weak convergence l2 general}, respectively,
we omit their proofs.
\subsection{Example: A linear model of 
Gaussian diffusion processes}
Let us consider the $(p+1)$-dimensional
Ornstein--Uhlenbeck
process $\{\bm{Y}_t\}$ which is a solution to the following
equation:
\[
\bm{Y}_t = \bm{Y}_0 + \int_0^t \bm{A} \bm{Y}_s \dr s + \bm{\Sigma} \bm{W}_t,\quad t \geq 0,
\]
where $\bm{A}, \bm{\Sigma} \in \mathbb{R}^{(p+1)\times (p+1)}$
are unknown matrices and 
$\{\bm{W}_t\}_{t \geq 0}$ is a $(p+1)$-dimensional 
standard Brownian motion.
Put $\bm{Y}_t = (X_t, \bm{Z}_t^\top)^\top$,
$\bm{W}_t = (W_t, W_t^1,\ldots,W_t^{p})^\top$ 
for every $t \geq 0$.
When $\bm{\Sigma} = \diag(\sigma,\sigma_1,\ldots,\sigma_p)$, it holds that 
\[
X_t = X_0 + \int_0^t \{{\alpha} X_s + \bm{\beta}^\top \bm{Z}_s\} \dr s + \sigma W_t,\quad 
t \geq0,
\]
where $\bm{\theta}^\top :=({\alpha}, \bm{\beta}^\top)$ is the first row of 
the matrix $\bm{A}$.
Let $\bm{\theta}_0^\top = ({\alpha}_0, \bm{\beta}_0^\top)$ 
and $\sigma_0$ be true values of $\bm{\theta}$ and 
$\sigma$.
We can consider the following 
score function and Hessian matrix:
\[
\psi_n^{(1)}(\bm{\theta}) = \frac{1}{n \Delta_n} \sum_{k=1}^n \bm{Y}_{t_{k-1}^n} \left\{
X_{t_k^n} - X_{t_{k-1}^n} - \bm{\theta}^\top \bm{Y}_{t_{k-1}^n}
\right\},\quad
\bm{V}_n^{(1)} = -\frac{1}{n \Delta_n}\sum_{k=1}^n \bm{Y}_{t_{k-1}^n} \bm{Y}_{t_{k-1}^n}^\top \Delta_n.
\]
We assume the following conditions.
\begin{assumption}\label{regularity linear diffusion}
\begin{itemize}
\item[(i)]
The matrix $\bm{A}$ is diagonalizable and the real parts of its eigenvalues $\varsigma_p \leq \ldots \leq \varsigma_1 $ are all negative.
Moreover, the initial value 
$\bm{Y}_0 \sim \mathcal{N}_{p+1}( \bm{0} , \bm{V})$.
\item[(ii)]
There exist positive constants 
$f_\infty$ and $K_p$,
\[
F_\infty(T_0, \bm{V})>f_\infty,\quad
 \|\bm{V}^* \bm{V}\|_2 \leq K_p,
\]
where $\bm{V}^*$
is the Hermitian adjoint of $\bm{V}$.
\item[(iii)]
The matrix 
$\bm{\mathcal{I}}(\bm{\theta}_0, \sigma_0)= 
\bm{V}_{T_0, T_0}/\sigma_0^2$ is invertible.
\end{itemize}
\end{assumption}
Condition (i) implies that 
$\{\bm{Y}_t\}$ is strictly stationary Gaussian process with covariance function; 
\[
C(s, t):= \Cov(\bm{Y}_s, \bm{Y}_t)
= \begin{cases}
\exp((s-t)\bm{A})\bm{V} & 0 \leq t \leq s < \infty \\
\bm{V}\exp((t-s)\bm{A}^\top) & 0 \leq s < t < \infty. \\
\end{cases}
\]
See Chapter 5.6 of \cite{karatzas1991brownian} 
for details of condition (i).
Especially, it holds that 
\[
\sum_{l=0}^\infty \exp(l \Delta_n \varsigma_1) < \infty
\]
under Assumption \ref{regularity linear diffusion}.
Then, we have the following result.
\begin{prop}\label{linear diffusion maximal ineq}
Let Assumption \ref{regularity linear diffusion} hold.
\begin{itemize}
\item[(i)]
It holds that 
\[
\E[\|\bm{Y}_0\|_\infty^4]^{1/4} < K_p.
\]
\item[(ii)]
There exists a constant $\eta \in (0, f_\infty/2)$ such that 
\[
\Prob\left(
F_\infty(T_0, \bm{V}_n^{(1)}) > f_\infty - 2 \eta
\right) \to 1,\quad n \to \infty.
\]
\if0
\[
\Prob\left(
\sup_{v \in C_{T_0}\cap \mathbb{S}_p} |v^\top W_n v|
> \eta
\right)\to 0,\quad
n \to \infty.
\]
\fi
\end{itemize}
\end{prop}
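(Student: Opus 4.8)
The plan is to treat the two assertions separately. Part (i) is a routine Gaussian moment bound: under Assumption~\ref{regularity linear diffusion}(i) the process is strictly stationary, so $\bm Y_0\sim\mathcal N_{p+1}(\bm 0,\bm V)$ and each coordinate obeys $Y_{0,j}\sim\mathcal N(0,V_{jj})$ with $V_{jj}\le\|\bm V\|_2$; since $\|\bm V\|_2^2=\|\bm V^*\bm V\|_2\le K_p$ by Assumption~\ref{regularity linear diffusion}(ii),
\[
\E[\|\bm Y_0\|_\infty^4]\ \le\ \sum_{j=1}^{p+1}\E[Y_{0,j}^4]\ =\ 3\sum_{j=1}^{p+1}V_{jj}^2\ \le\ 3(p+1)K_p ,
\]
so $\E[\|\bm Y_0\|_\infty^4]^{1/4}$ is bounded by a constant depending only on $p$ and $K_p$; relabelling it $K_p$ gives the claim (a sub-Gaussian maximal inequality would sharpen the $p$-dependence to $\sqrt{\log(p+1)}$, but that is not needed here).

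For (ii), I would first observe that the factors $\Delta_n$ in $\bm V_n^{(1)}$ cancel, so $\bm V_n^{(1)}=-\widehat{\bm V}_n$ with $\widehat{\bm V}_n:=n^{-1}\sum_{k=1}^n\bm Y_{t_{k-1}^n}\bm Y_{t_{k-1}^n}^\top$, and since $F_\infty(T_0,\cdot)$ depends on its matrix argument only through $|\bm v^\top\bm M\bm v|$ we have $F_\infty(T_0,\bm V_n^{(1)})=F_\infty(T_0,\widehat{\bm V}_n)$. Writing $\bm W_n:=\bm V-\widehat{\bm V}_n$ and using that $\bm V$ and $\widehat{\bm V}_n$ are positive semidefinite, for every $\bm v\in C_{T_0}\setminus\{\bm 0\}$ one has $|\bm v^\top\widehat{\bm V}_n\bm v|=\bm v^\top\bm V\bm v-\bm v^\top\bm W_n\bm v\ge|\bm v^\top\bm V\bm v|-|\bm v^\top\bm W_n\bm v|$; dividing by $\|\bm v_{T_0}\|_1\|\bm v\|_\infty$, taking the infimum over $C_{T_0}\setminus\{\bm 0\}$, and using scale invariance to restrict to $\mathbb S_p$,
\[
F_\infty(T_0,\widehat{\bm V}_n)\ \ge\ F_\infty(T_0,\bm V)-\sup_{\bm v\in C_{T_0}\cap\mathbb S_p}\frac{|\bm v^\top\bm W_n\bm v|}{\|\bm v_{T_0}\|_1\|\bm v\|_\infty}.
\]
Since $F_\infty(T_0,\bm V)>f_\infty$ by Assumption~\ref{regularity linear diffusion}(ii), it suffices to prove the supremum above is $o_p(1)$: then for any fixed $\eta\in(0,f_\infty/2)$ the event that it is at most $\eta$ has probability tending to $1$ and forces $F_\infty(T_0,\bm V_n^{(1)})>f_\infty-\eta>f_\infty-2\eta$, which is \eqref{matrix condition general diffusion} with $\delta=f_\infty-2\eta$.

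To control the supremum I would use that on $C_{T_0}$ one has $\|\bm v\|_1\le2\|\bm v_{T_0}\|_1\le2s\|\bm v\|_\infty$, whence $\|\bm v\|_1^2\le4s\|\bm v_{T_0}\|_1\|\bm v\|_\infty$ and the ratio is at most $4s\max_{i,j}|(\bm W_n)_{ij}|$, so it remains to show $s\max_{i,j}|(\bm W_n)_{ij}|\to^p0$. This is the crux. Under Assumption~\ref{regularity linear diffusion}(i) the sampled chain $\{\bm Y_{t_k^n}\}_k$ is strictly stationary and Gaussian with autocovariance $\bm V\exp(t_l^n\bm A^\top)$ decaying like $\exp(-c\,l\Delta_n)$, hence exponentially $\beta$-mixing with $\beta(l)\le C\exp(-c\,l\Delta_n)$; for each fixed $(i,j)$ the centred summands $(\bm Y_{t_{k-1}^n})_i(\bm Y_{t_{k-1}^n})_j-V_{ij}$ are stationary, sub-exponential with $\psi_1$-norm of order $K_p^{1/2}$ and variance of order $K_p$, and inherit this mixing, so the Bernstein-type inequality for weakly dependent sequences of \cite{merlevede2011bernstein} yields
\[
\Prob\bigl(|(\bm W_n)_{ij}|>t\bigr)\ \le\ C\exp\!\Bigl(-\frac{c\,n\Delta_n\,t^2}{K_p+K_p^{1/2}t}\Bigr)
\]
on the relevant range of $t$; a union bound over the $O(p^2)$ entries then gives $\max_{i,j}|(\bm W_n)_{ij}|=O_p\bigl(K_p^{1/2}\sqrt{(\log p)/(n\Delta_n)}\bigr)$, which under $\log p=o((n\Delta_n)^{1/2})$ (Assumption~\ref{regularity diffusion}(v)) and $s$ fixed gives $s\max_{i,j}|(\bm W_n)_{ij}|\to^p0$.

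I expect the main obstacle to be precisely this last step: verifying that the stationary Ornstein--Uhlenbeck process is exponentially $\beta$-mixing (which follows from the Gaussian structure together with the negativity of the real parts of the eigenvalues of $\bm A$), and that the coordinatewise products are sub-exponential with a $\psi_1$-norm controlled uniformly over the entries, so that a concentration inequality can be applied entrywise and then combined over the $O(p^2)$ entries with the correct dependence on $p$, $K_p$ and $\Delta_n$. By comparison the compatibility-type reduction and the Gaussian moment bound of (i) are routine.
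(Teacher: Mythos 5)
Your proposal is correct in outline and coincides with the paper on the first half of part (ii) — both reduce the claim to showing $\sup_{\bm{v}\in C_{T_0}\cap\mathbb{S}_p}|\bm{v}^\top\bm{W}_n\bm{v}|=o_p(1)$ for $\bm{W}_n=\bm{V}_n^{(1)}-\bm{V}$ and then use $F_\infty(T_0,\bm{V}_n^{(1)})\ge F_\infty(T_0,\bm{V})-\sup(\cdots)$ — but you take a genuinely different route for the concentration step. The paper fixes $\bm{v}$, writes $\bm{v}^\top\bm{W}_n\bm{v}=n^{-1}\{\|\bm{Y}(n)^\top\bm{v}\|_2^2-\E\|\bm{Y}(n)^\top\bm{v}\|_2^2\}$ as a Gaussian quadratic form, applies the Hanson--Wright inequality (Lemma \ref{hanson-wright}) with $\|\bm{Q}_n\|_2\le K_p\sum_{l\ge0}\exp(l\Delta_n\varsigma_1)$ obtained from the diagonalizability of $\bm{A}$, and then takes a union bound over an $\epsilon$-net of $2s$-sparse unit vectors ($\binom{p}{2s}6^{2s}$ points, following Basu--Michailidis). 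You instead bound the supremum deterministically by $4s\max_{i,j}|(\bm{W}_n)_{ij}|$ via $\|\bm{v}\|_1^2\le 4s\|\bm{v}_{T_0}\|_1\|\bm{v}\|_\infty$ on $C_{T_0}$, and control the entries by a Bernstein inequality for mixing sub-exponential sequences plus a union bound over $O(p^2)$ entries. Your route avoids the sparse net and yields an explicit rate $\sqrt{\log p/(n\Delta_n)}$; the paper's route avoids any mixing argument by exploiting Gaussianity exactly, needing only the operator norm of $\bm{Q}_n$. The one point you correctly flag as the obstacle is genuinely delicate in your version: the sampled chain $\{\bm{Y}_{t_k^n}\}$ has autocovariance decaying like $\exp(-|\varsigma_1|l\Delta_n)$ in the lag $l$, so the mixing constants (and hence the constants $C_1,C_2,C_3$ and the variance proxy $\mathcal{V}$ in Lemma \ref{sub-weibull ineq revised}) are \emph{not} uniform in $n$ but degrade like $1/\Delta_n$ as $\Delta_n\to0$; your claimed exponent $n\Delta_n t^2$ is the right outcome, but this $\Delta_n$-dependence has to be tracked explicitly through the inequality rather than cited off the shelf. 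Two smaller remarks: in part (i) the paper simply inherits the bound from Assumption \ref{regularity diffusion}(ii) (for the linear model $\partial_{\bm{\theta}}S(\bm{Y}_t;\bm{\theta}_0)=\bm{Y}_t$), whereas your explicit Gaussian computation produces $(3(p+1)K_p)^{1/4}$, which is not literally $\le K_p$ — the relabelling you mention changes the constant that then enters the rate conditions on $\lambda_n$; and your reduction to $s\max_{i,j}|(\bm{W}_n)_{ij}|$, like the paper's $2s\log 6p$ term, requires $s$ to be controlled, which is consistent with the standing assumptions.
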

See Subsection \ref{subsec: proof of sec4} for the 
proof.
The assertion (i) implies 
\[
\Prob(\|\psi_n^{(1)}(\bm{\theta}_0)\|_\infty> \lambda_n) \to 0,\quad
n \to \infty,
\]
when $\lambda_n$ satisfies that
\[
\max\left\{\frac{K_p \Delta_n^{1/2} D_2^{1/2}}{{\lambda}_n},
\frac{K_p^2}{{\lambda}_n}\sqrt{\frac{\log (1+p)}{n \Delta_n}}\right\} \to 0,\quad
n \to \infty.
\]
Note that the process $\{\bm{Y}_{t T_0}\}$ is ergodic 
with an invariant measure under Condition 
(i) of Assumption \ref{regularity linear diffusion}.
As for the diffusion coefficient $\sigma$, 
we can construct a consistent estimator 
by classical methods, see, e.g., \cite{yoshida1992estimation}. 
In conclusion, under Assumption \ref{regularity linear diffusion}, we can apply the estimation procedure 
described in the previous subsection and obtain an 
asymptotically normal estimator.
\if0
with a tuning parameter $\lambda_n$ satisfying that 
\[
\lambda_n \asymp ...
\]
\fi
\if0
\subsection{Integer-valued time series with high-dimensional covariates}
We consider the following model:
\[
X_t = \sum_{i=1}^{p_1} \bm{\alpha}_i \circ X_{t-i} + \epsilon_t,\quad
t \in \mathbb{Z},
\]
where $\bm{\alpha}_i \geq 0$ and 
$\{\epsilon_t\}_{t \in \mathbb{Z}}$ is an i.i.d. sequence with 
\[
\epsilon_t | \mathcal{F}_{t-1} \sim Poisson(\lambda_{t-1}),\quad 
t \in \mathbb{Z}
\]
with for every $t \in \mathbb{Z}$,
\[
\lambda_{t-1} = \exp(\bm{\beta}^\top \bm{Z}_{t-1}),\quad
\mathcal{F}_{t-1} = \sigma(X_s, \bm{Z}_s, s \leq t)
\]
and $\{\bm{Z}_t\}_{t \in \mathbb{Z}}$ is an
$\mathbb{R}^{p_2}$-valued time varying covariate. 
Then, we have the following representation:
\[
X_t = \exp(\bm{\beta}^\top \bm{Z}_{t-1}) + \sum_{i=1}^{p_1} \bm{\alpha}_i X_{t-i} + u_t,
\]
where $u_t = X_t - E[X_t | \mathcal{F}_{t-1}]$.
Note that $\{u_t\}_{t \in \mathbb{Z}}$ is a martingale difference and that
\[
E[u_t^2 | \mathcal{F}_{t-1}] = \exp(\bm{\beta}^\top \bm{Z}_{t-1}) + \sum_{i=1}^{p_1} \bm{\alpha}_i X_{t-i},\quad t \in \mathbb{Z}.
\]
Let $\bm{\alpha} = (\bm{\alpha}_1, \ldots,\bm{\alpha}_{p_1})^\top$
and 
$\bm{\theta}^\top = (\bm{\alpha}^\top, \bm{\beta}^\top) \in \mathbb{R}^p$ with $p = p_1 + p_2$.
We consider the following conditions, which 
are sufficient conditions for 
Assumption \ref{regularity time series}.
\fi
\section{Proofs}\label{sec:proof}
\subsection{Stochastic maximal inequalities and concentration inequalities}\label{subsec: maximal ineqs}
In this section, we introduce three 
useful lemmas which are used to derive
the error bounds of the Dantzig selector.

The following maximal inequality for square integrable martingales is provided in \cite{nishiyama2021martingale}.
\begin{lem}[Theorem A1.1.6 of \cite{nishiyama2021martingale}]\label{maximal ineq}
Let $p$ be any positive integer.
Let a $p$-dimensional martingale difference sequence $\bm{\xi} = (\xi^1,\ldots,\xi^p)^\top$
on a stochastic basis $\bm{B} = (\Omega, \mathcal{F}, \Prob, \{\mathcal{F}_k\}_{k \geq 0})$ such that 
$\E[(\xi_k^i)^2] < \infty$ for all $i, k$ given.
Then, it holds for any finite stopping time 
$T$ that 
\[
\E\left[
\max_{1 \leq i \leq p} \left|
\sum_{k=1}^T \xi_k^i
\right|
\right]
\leq 2\sqrt{2} \sqrt{\E\left[\sum_{k=1}^T \max_{1 \leq i \leq p} (\xi_k^i)^2\right]}\sqrt{\log (1+p)}
\]
and that 
\[
\E\left[
\max_{1 \leq i \leq p} \left|
\sum_{k=1}^T \xi_k^i
\right|
\right]
\leq 2\sqrt{2} \sqrt{\E\left[\sum_{k=1}^T \E\left[\max_{1 \leq i \leq p} (\xi_k^i)^2|\mathcal{F}_{k-1}\right]\right]}\sqrt{\log (1+p)}.
\]
\end{lem}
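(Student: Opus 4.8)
The plan is to reduce the bound to a sub‑Gaussian–type tail estimate for each coordinate martingale and to pay only a factor $\sqrt{\log(1+p)}$ for the maximum via a union bound. Write $M_k^i=\sum_{j\le k}\xi_j^i$ and $A_m=\sum_{k=1}^m\max_{1\le i\le p}(\xi_k^i)^2$, so the claim is $\E[\max_i|M_T^i|]\le 2\sqrt2\,\sqrt{\log(1+p)}\,\sqrt{\E[A_T]}$ (the second, predictable, version being identical with $A_T$ replaced by $\sum_k\E[\max_i(\xi_k^i)^2\mid\mathcal F_{k-1}]$). Since only second moments of the increments are assumed, the first step must be a truncation of the large jumps. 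Fix a level $L>0$, set $\sigma_L=\inf\{m\ge0:A_{m+1}>L\}$, and stop each $M^i$ at $\sigma_L$. By construction $A_{\sigma_L}\le L$, so on $\{k\le\sigma_L\}$ every increment obeys $|\xi_k^i|\le\sqrt L$ and the (optional) quadratic variation of the stopped martingale is $\le L$; moreover $\{A_T\le L\}\subseteq\{\sigma_L\ge T\}$, and on that event $M_T^i=M_{T\wedge\sigma_L}^i$.

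On the stopped process each coordinate is a martingale with increments bounded by $\sqrt L$ and quadratic variation bounded by $L$, so a Bennett/Freedman exponential‑supermartingale argument (cf.\ \cite{freedman1975tail}) yields $\Prob(\sup_m|M^i_{m\wedge\sigma_L}|>x)\le 2\exp(-cx^2/L)$ for a suitable absolute constant $c$ in the relevant range of $x$. Combining this with Markov's inequality applied to the increasing process $A$ and a union bound over $i=1,\dots,p$ gives
\[
\Prob\Big(\max_i|M_T^i|>x\Big)\le\Prob(A_T>L)+\Prob\Big(\max_i\sup_m|M^i_{m\wedge\sigma_L}|>x\Big)\le\frac{\E[A_T]}{L}+2p\exp(-cx^2/L).
\]
One then writes $\E[\max_i|M_T^i|]=\int_0^\infty\Prob(\max_i|M_T^i|>x)\,\dr x$, bounds the integrand by $\min\{1,\cdot\}$, optimizes first the cut‑off (of order $\sqrt{L\log(1+p)}$, where the exponential term crosses $1$) and then the truncation level $L$ (of order $\E[A_T]$, so that the two contributions balance), and after a careful accounting of the constants recovers the factor $2\sqrt2$. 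The predictable version follows along the same lines, using a Lenglart‑type domination to replace the optional quadratic variation by the predictable one.

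An alternative, essentially equivalent route goes through moments: $\max_i|M_T^i|\le(\sum_i|M_T^i|^{2m})^{1/2m}$ for an even integer $2m$, and Jensen's inequality together with the $L^{2m}$ Burkholder inequality (whose constant is of order $\sqrt m$ for $q=2m\ge2$) and the crude bound $[M^i]_T\le A_T$ reduce matters to $p^{1/2m}\sqrt m\,\|A_T\|_m^{1/2}$; taking $2m\asymp\log(1+p)$ turns $p^{1/2m}$ into a constant and produces the $\sqrt{\log(1+p)}$ factor, once $\|A_T\|_m$ is brought back to $\E[A_T]$ on the truncated event. The main obstacle, shared by both approaches, is exactly the tension created by the $L^2$‑only hypothesis: truncating the large increments is unavoidable, and one must choose the truncation level so that the mass discarded is controlled by the \emph{first} moment $\E[A_T]$ alone (not by higher moments of $A_T$), while keeping the per‑coordinate tail sharp enough that the union bound costs only $\sqrt{\log(1+p)}$ and not $\sqrt p$ — and these two requirements pull $L$ in opposite directions. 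Making them compatible, together with handling the overshoot of $A$ at $\sigma_L$, is where the real work lies; once the level is tuned correctly the remaining calculations are routine.
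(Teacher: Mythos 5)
First, a point of reference: the paper does not prove this lemma at all — it is imported verbatim as Theorem A1.1.6 of \cite{nishiyama2021martingale} — so there is no internal proof to compare your sketch against. Judged on its own, your truncation strategy has a genuine gap at precisely the point you yourself flag as ``where the real work lies,'' and I do not believe it can be closed in the form you describe. Concretely: (i) the time $\sigma_L=\inf\{m\ge 0: A_{m+1}>L\}$ is not a stopping time for the optional process $A$, since $\{\sigma_L\le m\}$ is only $\mathcal F_{m+1}$-measurable; with the honest stopping time $\inf\{m:A_m>L\}$ the overshoot of $A$ is an unbounded $L^2$ increment, so you cannot simultaneously keep the martingale property and the bounds $|\xi^i_k|\le\sqrt L$, $[M^i]\le L$. (ii) Even granting the truncation, the per-coordinate tail is not sub-Gaussian in the range that matters: with increments bounded by $\sqrt L$ and variation bounded by $L$, a Freedman/Bennett bound gives $\exp\bigl(-x^2/(2(L+\sqrt L\,x/3))\bigr)$, which for $x\asymp\sqrt{L\log(1+p)}$ behaves like $\exp(-cx/\sqrt L)$; the union bound then forces the cut-off up to $x\asymp\sqrt L\,\log(1+p)$ and the method yields $\sqrt{\E[A_T]}\,\log(1+p)$, not $\sqrt{\E[A_T]\log(1+p)}$. (Freedman is also stated for the \emph{predictable} variation, which your stopping rule does not control.) (iii) Most seriously, a single deterministic level $L$ cannot control the discarded mass by $\E[A_T]$ alone: $\E[\max_i|M_T^i|\,1_{\{A_T>L\}}]\le\sqrt{p\,\E[A_T]}\sqrt{\Prob(A_T>L)}$, and Markov on $A_T$ leaves a factor $\sqrt{p}$ no matter how $L$ is tuned. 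Your $L^{2m}$/Burkholder alternative hits the same wall ($\|A_T\|_{L^m}$ versus $\E[A_T]$) for the same reason.

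The argument that actually produces the constant $2\sqrt2\sqrt{\log(1+p)}$ with only a first moment of $A_T$ avoids truncation entirely and is self-normalized: from the de la Pe\~na--Bercu--Touati supermartingale $\exp\bigl(\lambda M^i-\tfrac{\lambda^2}{2}([M^i]+\langle M^i\rangle)\bigr)$ one obtains, by the method of mixtures, a uniform bound on $\E\bigl[\exp\bigl(|M^i_T|^2/(4([M^i]_T+\langle M^i\rangle_T))\bigr)\bigr]$; summing over $i$ and taking logarithms via Jensen bounds the self-normalized maximum by $\log(1+p)$ up to constants, and a single Cauchy--Schwarz step $\E[\max_i|M^i_T|]\le \bigl(\E[\max_i|M^i_T|^2/(4(A_T+\tilde A_T))]\bigr)^{1/2}\cdot 2\bigl(\E[A_T+\tilde A_T]\bigr)^{1/2}$ with $\E[\tilde A_T]=\E[A_T]$ delivers exactly the $2\sqrt2$ shape. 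If you want a self-contained proof, that exponential-supermartingale route is the one to pursue; the truncation scheme cannot be repaired by tuning $L$.
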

The inequality given by the following lemma
is a variant of the Hanson--Wright inequality for 
Gaussian random variables, provided by \cite{wong2020lasso}.
See also \cite{rudelson2013hanson} for details.
\begin{lem}[Lemma 11 of \cite{wong2020lasso}]\label{hanson-wright}
Let $\bm{Y} \sim \mathcal{N}_n(\bm{0}, \bm{Q})$
be an $n$-dimensional random vector.
Then, there exists a universal constant 
$c>0$ such that for any $\eta>0$,
\[
\Prob\left(
\frac{1}{n}\left|
\|\bm{Y}\|_2^2-\E[\|\bm{Y}\|_2^2]
\right|
> \eta \| \bm{Q} \|_2
\right)
\leq 2 \exp\left(
-cn \min \{\eta, \eta^2\}
\right).
\]
\end{lem}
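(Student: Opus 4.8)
The plan is to reduce the Gaussian quadratic form to a weighted sum of independent centred $\chi^2_1$ variables and then run a standard sub-exponential Chernoff argument. First I would write $\bm{Y} = \bm{Q}^{1/2}\bm{g}$ with $\bm{g} \sim \mathcal{N}_n(\bm{0}, \bm{I})$ and diagonalise $\bm{Q} = \sum_{i=1}^n \lambda_i \bm{u}_i \bm{u}_i^\top$ with orthonormal eigenvectors $\bm{u}_i$. Then $\|\bm{Y}\|_2^2 = \bm{g}^\top \bm{Q} \bm{g} = \sum_{i=1}^n \lambda_i Z_i^2$, where $Z_i := \bm{u}_i^\top \bm{g}$ are i.i.d.\ standard normal, so that $W := \|\bm{Y}\|_2^2 - \E[\|\bm{Y}\|_2^2] = \sum_{i=1}^n \lambda_i(Z_i^2 - 1)$ is a sum of independent centred terms. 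Here $\|\bm{Q}\|_2 = \max_i |\lambda_i|$ and $\sum_i \lambda_i^2 = \|\bm{Q}\|_F^2 \le n\|\bm{Q}\|_2^2$; the last inequality is the only place the dimension $n$ enters the exponent.

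Next I would control the moment generating function. For $|t\lambda_i| < 1/2$ one has $\E[\exp(t\lambda_i(Z_i^2-1))] = \me^{-t\lambda_i}(1-2t\lambda_i)^{-1/2}$, and a second-order Taylor expansion of $x \mapsto -x - \tfrac12\log(1-2x)$ about $0$ (its value and first derivative both vanish, its second derivative is $2(1-2x)^{-2}$) shows this is at most $\exp(c_1 t^2\lambda_i^2)$ for all $|t| \le (4\|\bm{Q}\|_2)^{-1}$, with a universal $c_1$ (one may take $c_1 = 4$). Multiplying over $i$ and using $\sum_i \lambda_i^2 \le n\|\bm{Q}\|_2^2$ gives $\E[\exp(tW)] \le \exp(c_1 n\|\bm{Q}\|_2^2 t^2)$ for $|t| \le (4\|\bm{Q}\|_2)^{-1}$, and the same bound holds for $-W$ by the symmetry of the construction.

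Finally, for $s = \eta n\|\bm{Q}\|_2 > 0$, Markov's inequality yields $\Prob(W > s) \le \exp(-ts + c_1 n\|\bm{Q}\|_2^2 t^2)$, and I would optimise over $t$ in $(0, (4\|\bm{Q}\|_2)^{-1}]$. When $\eta$ is small the unconstrained optimiser $t^\ast \asymp s/(n\|\bm{Q}\|_2^2)$ is admissible and gives $\exp(-c\,s^2/(n\|\bm{Q}\|_2^2)) = \exp(-cn\eta^2)$; when $\eta$ is large one instead takes $t = (4\|\bm{Q}\|_2)^{-1}$ and obtains $\exp(-c\,s/\|\bm{Q}\|_2) = \exp(-cn\eta)$. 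Combining the two cases with the matching bound for $-W$ and a union bound gives $\Prob(|W| > \eta n\|\bm{Q}\|_2) \le 2\exp(-cn\min\{\eta,\eta^2\})$, which is the assertion. The main obstacle is purely bookkeeping: one must track the constants carefully so that the final $c$ depends on neither $n$ nor $\bm{Q}$, and in particular check that the crossover between the interior-optimum regime and the boundary regime is handled so that the single exponent $\min\{\eta,\eta^2\}$ dominates both; the rest is routine.
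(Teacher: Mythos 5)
Your argument is correct, but note that the paper does not actually prove this statement: it is imported verbatim as Lemma 11 of \cite{wong2020lasso}, which in turn rests on the general Hanson--Wright inequality of \cite{rudelson2013hanson} for quadratic forms in sub-Gaussian vectors (proved there via decoupling and comparison arguments). What you have written is a self-contained and considerably more elementary derivation that exploits the special structure available here: because the matrix in the quadratic form is the covariance $\bm{Q}$ itself, hence positive semidefinite, the diagonalisation $\|\bm{Y}\|_2^2=\sum_i\lambda_i Z_i^2$ reduces everything to a weighted sum of independent centred $\chi^2_1$ variables, and the standard sub-exponential MGF/Chernoff computation finishes the job. Your bookkeeping checks out: $f(x)=-x-\tfrac12\log(1-2x)$ satisfies $f(0)=f'(0)=0$ and $f''(x)\le 8$ on $|x|\le 1/4$, so $c_1=4$ is valid; the interior optimum $t^\ast=s/(8n\|\bm{Q}\|_2^2)$ is admissible exactly when $\eta\le 2$ and yields the $\eta^2$ regime, while the boundary choice $t=(4\|\bm{Q}\|_2)^{-1}$ yields exponent $-n(\eta-1)/4\le -n\eta/8$ for $\eta>2$; and the MGF bound holds for negative $t$ in the same range, which is what actually gives the lower tail (the phrase ``by symmetry'' is slightly loose, since $W$ is not a symmetric random variable, but the step is sound). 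The trade-off is that your route uses positive semidefiniteness of $\bm{Q}$ in an essential way and would not extend to a general (non-PSD) matrix in the quadratic form, whereas the Rudelson--Vershynin machinery would; for the lemma as stated, your approach is entirely adequate and arguably preferable for its transparency.
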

The last lemma gives a concentration inequality for 
non-Gaussian time series satisfying
geometric ${\tau}$-mixing condition, which is essentially equivalent to the Theorem 1 of \cite{merlevede2011bernstein}.
See also \cite{wong2020lasso} for the proof.
\begin{lem}\label{sub-weibull ineq revised}
Let $\{X_t\}_{t \in \mathbb{Z}}$ be an 
$\mathbb{R}$-valued $\tau$-mixing zero mean strictly stationary process
which satisfies that 
\[
\tau(l) \leq a \exp(-c l^{\gamma_1}),\quad
l \in \mathbb{N}
\]
and
\[
\sup_{q \geq 1}\E[|X_t|^{q}]^{1/q}{q}^{-1/\gamma_2} \leq K,\quad \forall t \in \mathbb{Z}
\]
for some constants $a, c, \gamma_1, \gamma_2$, and 
$K>0$.
Let 
\[
\gamma := \left(
\frac{1}{\gamma_1} + \frac{1}{\gamma_2}
\right)^{-1} < 1.
\]
Then, for every $n >4$ and $\epsilon > 1/n$,
it holds that 
\begin{eqnarray}\label{concentration:merlevede}
\Prob\left(
\left|
\frac{1}{n} \sum_{t=1}^n X_t
\right| > \epsilon
\right)
&\leq& 
n \exp\left(
-\frac{(\epsilon n)^\gamma}{K^\gamma C_1}
\right)
+ \exp\left(
- \frac{\epsilon^2 n}{K^2C_2\{1+n^2 \mathcal{V}\}}
\right) \nonumber \\
&& + \exp\left(
-\frac{\epsilon^2 n}{K^2 C_3}
\right),
\end{eqnarray}
where 
\begin{equation}\label{def:V}
\mathcal{V} = \sup_{M>0} \sup_{i >0} \left(
\Var[\varphi_M(X_i)] + 2 \sum_{j > i} |\Cov(\varphi_M(X_i), \varphi_M(X_j))|
\right),
\end{equation}
with 
\[
\varphi_M(x) = (x \land M) \lor (-M),\quad
x \in \mathbb{R},\quad M >0
\]
and
$C_k, k=1,2,3$ are constants depending only on $a$, $c$, $\gamma_1$, and $\gamma_2$.
\end{lem}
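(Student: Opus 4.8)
The plan is to obtain the inequality by a truncation argument that reduces it to the Bernstein-type inequality for bounded, geometrically mixing sequences of \cite{merlevede2011bernstein}, following the route of \cite{wong2020lasso}. Fix a truncation level $M>0$, to be chosen at the end as a function of $n$ and $\epsilon$. Since $\E[X_t]=0$, write for each $t$
\[
X_t = \bigl(\varphi_M(X_t)-\E[\varphi_M(X_t)]\bigr) + \bigl(X_t-\varphi_M(X_t)-\E[X_t-\varphi_M(X_t)]\bigr) =: \widetilde{X}_t^{(M)} + R_t^{(M)}.
\]
On the event $\{\,|n^{-1}\sum_{t=1}^{n} X_t|>\epsilon\,\}$ at least one of $\{\,|n^{-1}\sum_{t=1}^{n}\widetilde{X}_t^{(M)}|>\epsilon/2\,\}$ and $\{\,|n^{-1}\sum_{t=1}^{n} R_t^{(M)}|>\epsilon/2\,\}$ occurs, so the left-hand side of \eqref{concentration:merlevede} is bounded by the sum of the two corresponding probabilities.

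For the bounded part, $\{\widetilde{X}_t^{(M)}\}$ is strictly stationary, centered, and bounded by $2M$; moreover, since $\varphi_M$ is $1$-Lipschitz and recentering by a constant leaves the dependence coefficients unchanged, it is again geometrically $\tau$-mixing with $\tau$-coefficients at most $a\exp(-cl^{\gamma_1})$. I would then apply the Bernstein inequality of \cite{merlevede2011bernstein} (in its $\tau$-mixing form, which follows from the coupling argument underlying that result; see also \cite{wong2020lasso}) to $\sum_{t=1}^{n}\widetilde{X}_t^{(M)}$ at deviation level $\epsilon n/2$. This produces, for $n>4$ and $\epsilon>1/n$, a bound consisting of a large-deviation term whose exponent is governed jointly by the mixing rate $\gamma_1$ and the truncation level $M$, together with two Gaussian terms in which the variance factor may be taken to be the quantity $\mathcal{V}$ of \eqref{def:V}, this being precisely the uniform-in-$M$ bound on the summed autocovariances of the truncated process; the moment hypothesis with $q=2$ gives $\mathcal{V}\le c\,K^{2}$, which accounts for the $K^{2}$ scalings in \eqref{concentration:merlevede}.

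For the remainder I would use only a first-moment estimate. Optimizing Markov's inequality over $q\ge 1$ in $\|X_t\|_{L^q}\le Kq^{1/\gamma_2}$ yields the sub-Weibull tail $\Prob(|X_t|>x)\le 2\exp(-(x/(eK))^{\gamma_2})$, so that $\E|R_t^{(M)}|\le 2\,\E[|X_t|\,1_{\{|X_t|>M\}}]$ decays like a fixed polynomial in $M/K$ times $\exp(-(M/(2eK))^{\gamma_2})$; by stationarity and Markov's inequality, $\Prob(|n^{-1}\sum_{t=1}^{n} R_t^{(M)}|>\epsilon/2)\le (4/\epsilon)\,\E|R_0^{(M)}|$, which, combined with the elementary bound $\Prob(\max_{t\le n}|X_t|>M)\le n\,\Prob(|X_0|>M)$, is again of sub-Weibull type in $M$ with exponent $\gamma_2$.

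Finally I would pick $M$ to balance the $M$-dependent large-deviation term coming from the bounded part against the sub-Weibull remainder; the optimal choice is $M\asymp K^{1-\gamma/\gamma_2}(\epsilon n)^{\gamma/\gamma_2}$ up to logarithmic factors, and with it both exponents collapse to $(\epsilon n)^{\gamma}/(K^{\gamma}C_1)$ with $\gamma=(1/\gamma_1+1/\gamma_2)^{-1}$, while the remaining two contributions stay in the Gaussian form displayed in \eqref{concentration:merlevede}, with constants $C_2,C_3$ depending only on $a,c,\gamma_1,\gamma_2$. I expect the main obstacle to be purely bookkeeping: carrying the universal constants through the truncation and the optimization so that they depend only on $a,c,\gamma_1,\gamma_2$, and --- if one wants a self-contained proof rather than a citation --- recording precisely the passage from geometric $\tau$-mixing to the dependence coefficient used in \cite{merlevede2011bernstein}. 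As both steps are routine, we simply invoke \cite{merlevede2011bernstein} and \cite{wong2020lasso}.
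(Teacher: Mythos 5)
Your proposal matches the paper's treatment: the paper offers no proof of this lemma beyond noting that it is essentially Theorem 1 of \cite{merlevede2011bernstein} and pointing to \cite{wong2020lasso} for the argument, which is exactly the citation you end on, and your truncation-plus-Bernstein sketch is a faithful outline of how those references establish the bound (including the role of $\mathcal{V}$ as the uniform-in-$M$ autocovariance bound for the truncated process). Nothing further is needed.
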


\if0
\begin{lem}[Lemma 13 of \cite{wong2020lasso}]\label{sub-weibull ineq}
Let $\{X_t\}_{t \in \mathbb{Z}}$ be an 
$\mathbb{R}$-valued $\bm{\beta}$-mixing zero mean strictly stationary process
which satisfies that 
\[
\bm{\beta}(n) \leq a \exp(-c n^{\gamma_1})
\]
and
\[
\sup_{q \geq 1}\E[|X_t|^{q}]^{1/q}{q}^{-1/\gamma_2} \leq K,\quad \forall t \in \mathbb{Z}
\]
for some constants $c, \gamma_1, \gamma_2$, and 
$K>0$.
Let 
\[
\gamma := \left(
\frac{1}{\gamma_1} + \frac{1}{\gamma_2}
\right)^{-1} < 1.
\]
Then, for every $n >4$ and $\epsilon > 1/n$,
it holds that 
\begin{eqnarray*}
\Prob\left(
\left|
\frac{1}{n} \sum_{t=1}^n X_t
\right| > \epsilon
\right)
&\leq& 
n \exp\left(
-\frac{(\epsilon n)^\gamma}{K^\gamma C_1}
\right)
+ \exp\left(
- \frac{\epsilon^2 n}{K^2 C_2}
\right),
\end{eqnarray*}
where $C_1$ and $C_2$ are constants depending only on $\gamma_1, \gamma_2$
and $c$.
\end{lem}
\fi
\subsection{Proofs for Section 3}\label{subsec: proof of sec3}
\begin{proof}[Proof of Proposition \ref{gradient time series}]
Let us first note that 
\[
\psi_n^{(1)}(\bm{\theta}_0)
= \frac{1}{n}\sum_{t=1}^n \frac{\partial}{\partial \bm{\theta}} S(\bm{Y}_{t-1}; \bm{\theta}_0)u_t.
\]
It follows from Lemma \ref{maximal ineq} that 
\begin{eqnarray*}
\E\left[
\|\psi_n^{(1)}(\bm{\theta}_0)\|_\infty
\right]
&=&  \E\left[
\max_{1 \leq i \leq p} \left|
\sum_{t=1}^n \xi_t^i
\right|
\right]\\
&\leq& 2\sqrt{2} \sqrt{\E\left[\sum_{t=1}^n \max_{1 \leq i \leq p} (\xi_t^i)^2\right]}\sqrt{\log (1+p)},
\end{eqnarray*}
where
\[
\xi_t^i
= \frac{1}{n}\frac{\partial}{\partial \bm{\theta}_i} S(\bm{Y}_{t-1}; \bm{\theta}_0)u_t.
\]
We have 
\begin{eqnarray*}
\E\left[\sum_{t=1}^n \max_{1 \leq i \leq p} (\xi_t^i)^2\right]
&=&\frac{1}{n^2} \sum_{t=1}^n 
\E\left[
\left\|
\frac{\partial}{\partial \bm{\theta}} S(\bm{Y}_{t-1}; \bm{\theta}_0)
\right\|_\infty^2
\E[u_t^2|\mathcal{F}_{t-1}]
\right] \\
&=&\frac{1}{n^2} \sum_{t=1}^n \E\left[
\left\|
\frac{\partial}{\partial \bm{\theta}} S(\bm{Y}_{t-1}; \bm{\theta}_0)
\right\|_\infty^2
\sigma^2(\bm{Y}_{t-1}; h_0)
\right] \\
&\leq& \frac{K_{p, s}}{n}.
\end{eqnarray*}
Therefore, we obtain 
\[
\E\left[
\|\psi_n^{(1)}(\bm{\theta}_0)\|_\infty
\right]
\leq 2 \sqrt{2}K_{p, s} \sqrt{\frac{\log(1+p)}{n}},
\]
which ends the proof.
\end{proof}
\begin{proof}[Proof of Proposition \ref{INAR maximal ineq}]
\begin{itemize}
\item[(i)]
Under Condition (iv), it follows from Theorem 1 of \cite{ahle2022sharp} that
\[
\E[X_t^q | \mathcal{F}_{t-1}]
\leq (\bm{\theta}^\top \bm{Y}_{t-1} + q/2)^q,
\]
hence that 
\[
\E[X_t^q]
\leq \E[(\bm{\theta}^\top \bm{Y}_{t-1} + q/2)^q].
\]
Using the triangle inequality, we have 
\begin{eqnarray*}
\|X_t\|_{L^q}
&\leq& \|\bm{\theta}^\top \bm{Y}_{t-1} + q/2\|_{L^q}\\
&\leq& \left\|\mu_\epsilon + \sum_{i=1}^p {\alpha}_i X_{t-i}\right\|_{L^q} + q/2\\
&\leq& \mu_\epsilon + \sum_{i=1}^p {\alpha}_i \|X_{t-i}\|_{L^q} + q/2.
\end{eqnarray*}
Since $\{X_t\}_{t \in \mathbb{Z}}$ is a stationary process, we have
\[
\|X_t\|_{L^q}
\leq q K_X.
\]
As well as the proof of Proposition \ref{gradient time series}, we have 
\begin{eqnarray*}
\Prob(\|\psi_n^{(1)}(\bm{\theta}_0)\|_\infty)
&\leq& \frac{2 \sqrt{2}}{\lambda_n}
\sqrt{\sum_{t=1}^n \E\left[\max_{1 \leq i \leq p}|\xi_t^i|^2\right]}\sqrt{\log(1+p)},
\end{eqnarray*}
where 
\[
\xi_t^i = \frac{1}{n}\bm{Y}_{t-1}^i u_t.
\]
Since the sparsity of $h_0$ is at most
$s$, we have 
\begin{eqnarray*}
\E\left[\max_{1 \leq i \leq p}|\xi_t^i|^2\right]
&=&\frac{1}{n^2}\E\left[
\|\bm{Y}_{t-1}\|_\infty^2 \E[u_t^2 | \mathcal{F}_{t-1}]
\right]\\
&=& \frac{1}{n^2}\E\left[
\|\bm{Y}_{t-1}\|_\infty^2 \sigma^2(\bm{X}_{t-1}; \bm{h}_0)
\right]\\
&=& \frac{1}{n^2}\E\left[
\|\bm{Y}_{t-1}\|_\infty^2 (\bm{h}_0^\top \bm{Y}_{t-1})
\right]\\
&\leq&\frac{1}{n^2}\E\left[
\|\bm{Y}_{t-1}\|_\infty^2 \|\bm{h}_0\|_1 \|\bm{Y}_{t-1}\|_\infty)
\right]\\
&\leq& \frac{s \|\bm{h}_0\|_\infty}{n^2}\E\left[
\|\bm{Y}_{t-1}\|_\infty^3
\right]\\
\end{eqnarray*}
Since $\{X_t\}_{t \in \mathbb{Z}}$ is a stationary process, we have
\begin{eqnarray*}
\E\left[
\|\bm{Y}_{t-1}\|_\infty^3
\right]
&=& \E\left[\left\{\max_{1 \leq i \leq p}|X_{t-i}|+1\right\}^3\right] \\
&\leq& 4 \E\left[ \max_{1 \leq i \leq p}
|X_{t-i}|^3\right] + 4 \\
&\leq& 4 p \E[|X_0|^3] + 4\\
&\leq& 4p \cdot 3^3 K_X^3 + 4, 
\end{eqnarray*}
which implies the conclusion.
\item[(ii)]
For every $\bm{v}=(v_0, v_1,\ldots,v_p)^\top \in C_{T_0} \cap \mathbb{S}_p$, 
note that
\begin{eqnarray*}
\|\bm{v}\|_1
&\leq& 2\|\bm{v}_{T_0}\|_1\\
&\leq& 2 \sqrt{s} \|\bm{v}_{T_0}\|_2\\
&\leq& 2 \sqrt{s}.
\end{eqnarray*}
Therefore, we have
\begin{eqnarray*}
\|\bm{v}^\top \bm{Y}_{t-1}\|_{L^q}
&\leq& \left\|v_0 + \sum_{j=1}^p v_j X_{t-j}\right\|_{L^q} \\
&\leq& |v_0| + \sum_{j=1}^p |v_j| q K_X\\
&\leq& \|\bm{v}\|_1q(1+K_X) \\
&\leq& 2 \sqrt{s} \| \bm{v} \|_2 q(1+K_X) \\
&\leq& 2 \sqrt{s} q (1+K_X),
\end{eqnarray*}
which completes the proof for $K_s = 2\sqrt{s}(1+K_X)$.
\item[(iii)]
Let $\bm{W}_n=\bm{V}_n^{(1)}-\bm{V}$.
Since we have
\begin{eqnarray*}
F_\infty(T_0, \bm{V}_n^{(1)})
&=&\inf_{\bm{v} \in C_{T_0}\setminus\{ \bm{0}\} } \frac{|\bm{v}^\top \bm{V}_n^{(1)}\bm{v}|}{\|\bm{v}_{T_0}\|_1 \|\bm{v}\|_\infty}\\
&=&  \inf_{\bm{v} \in C_{T_0}\setminus\{ \bm{0} \}} \frac{|\bm{v}^\top \bm{V} \bm{v}|}{\|\bm{v}_{T_0}\|_1 \|\bm{v}\|_\infty}
- \sup_{\bm{v} \in C_{T_0}\setminus \{ \bm{0}\} } \frac{|\bm{v}^\top \bm{W}_n \bm{v}|}{\|\bm{v}_{T_0}\|_1 \|\bm{v}\|_\infty},
\end{eqnarray*}
it suffices to show that 
for every $0 < \eta< f_\infty/2$, 
\begin{equation}\label{Wn inequality INAR}
\Prob\left(
\sup_{\bm{v} \in C_{T_0}\cap \mathbb{S}_p}
|\bm{v}^\top \bm{W}_n \bm{v}| > \eta
\right) \to 0,\quad
n \to \infty.
\end{equation}

For every $\bm{v} \in C_{T_0} \cap \mathbb{S}_p$, define that 
\[
\bm{Z}_t(v) := |\bm{v}^\top \bm{Y}_{t-1}|^2 - \E[|\bm{v}^\top \bm{Y}_{t-1}|^2].
\]
Then, we can observe that 
\[
\bm{v}^\top \bm{W}_n \bm{v} 
= \frac{1}{n}\sum_{t=1}^n \bm{Z}_t(\bm{v}).
\]
Since $\bm{v} ^\top \bm{Y}_{t-1}$ satisfies that 
\[
\sup_{q \geq 1}\E[|\bm{v}^\top \bm{Y}_{t-1}|^q]^{1/q}q^{-1} \leq K_s,
\]
we have 
\begin{eqnarray*}
\|\bm{Z}_{t}(\bm{v})\|_{L^q}
&=& \left\||\bm{v}^\top \bm{Y}_{t-1}|^2 - \E[|\bm{v}^\top \bm{Y}_{t-1}|^2]\right\|_{L^q} \\
&\leq& \left\||\bm{v}^\top \bm{Y}_{t-1}|^2\right\|_{L^q} + \E[|\bm{v}^\top \bm{Y}_{t-1}|^2] \\
&=& \E [|\bm{v}^\top \bm{Y}_{t-1}|^{2q}]^{1/q} + 4 K_s^2 \\
&=& \left(\E [|\bm{v}^\top \bm{Y}_{t-1}|^{2q}]^{1/2q}\right)^{2}+4 K_s^2 \\
&\leq& 4 q^2 K_s^2 + 4 K_s^2,\quad q \geq 1,
\end{eqnarray*}
which implies that there exists a constant $\tilde{K} > 0$ such that 
\[
\sup_{q \geq 1}\E[|\bm{Z}_t(\bm{v})|^{q}]^{1/q}{q}^{-2} \leq \tilde{K} s.
\]
By Lemma \ref{beta-mixing INAR}, 
we have 
\[
{\beta}(l) \leq as \exp(-cl^{\gamma_1}),\quad l \in \mathbb{N}.
\]
Then, as well as \cite{wong2020lasso}, 
we can evaluate the constant $\mathcal{V}$ 
appeared in 
\eqref{concentration:merlevede} as follows.
\begin{eqnarray*}
\mathcal{V}
&\leq& \|\bm{Z}_1(\bm{v})\|_{L^2}^2 + 4\|\bm{Z}_1(\bm{v})\|_{L^4}^2 \sum_{k=0}^\infty \sqrt{{\beta}(k)}\\
&\leq& \|\bm{Z}_1(\bm{v})\|_{L^2}^2 + 4\|\bm{Z}_1(\bm{v})\|_{L^4}^2 
\sqrt{as}\sum_{k=0}^\infty 
\exp\left(-\frac{1}{2} ck^{\gamma_1}\right) \\ 
&\leq& \tilde{C}_2 s^{5/2} 
\end{eqnarray*}
for some constant $\tilde{C}_2>0$ depending on 
$a, c$, and $\tilde{K}$.
It follows from Lemma \ref{sub-weibull ineq revised} that 
\begin{eqnarray*}
\Prob\left(
\left|
\frac{1}{n}\sum_{t=1}^n \bm{Z}_t(\bm{v})
\right|>\eta
\right)
&\leq& n \exp\left(
-\frac{(\eta n)^{\gamma}}{\tilde{K}^{\gamma}s^{\gamma}C_1}
\right)
+ \exp\left(
-\frac{\eta^2 n^2}{\tilde{K}^2 s^2 C_2\{1 + n \tilde{K}^2 s^{5/2}\}}
\right) \\
&& +\exp\left(
- \frac{\eta^2 n}{\tilde{K}^2 s^2 C_3}
\right)
\end{eqnarray*}
where 
\[
\gamma = \left(
\frac{1}{\gamma_1} + 2
\right)^{-1}
= \frac{\gamma_1}{1+2\gamma_1},
\]
$C_1$ and $C_2$ are positive 
constants depending only on $c$ appeared in the mixing condition (iii) of Assumption 3.8.
Therefore, as well as Proof of Proposition 3.1 in \cite{basu2015regularized} and the $\epsilon$-net argument provided in \cite{vershynin2018high}, 
we have
\begin{eqnarray*}
\Prob \left(
\sup_{\bm{v} \in C_{T_0}\cap \mathbb{S}_p} |\bm{v}^\top \bm{W}_n \bm{v}|
> \eta
\right)
&\leq&
\Prob \left(
9\sup_{v \in \mathcal{B}_0(2s)} |\bm{v}^\top \bm{W}_n \bm{v}|
> \eta
\right) \\
&\leq&
\Prob \left(
\sup_{\bm{v} \in \mathcal{B}_0(2s)} |\bm{v}^\top \bm{W}_n \bm{v}|
>  \frac{\eta}{9}
\right) \\
&\leq& \left(\begin{array}{c}
p \\
2s \\
\end{array}
\right) 6^{2s} \left\{ n \exp\left(
-\frac{(\eta n)^\gamma}{ (9 \tilde{K})^\gamma s^{\gamma}C_1}
\right)\right.\\
&&\left.+ \exp\left(
-\frac{\eta^2 n^2}{81^2 \tilde{K}^2s^{2} C_2\{1+\tilde{K} n s^{5/2}\}}
\right)\right.
\\
&& \left.+ \exp\left(
-\frac{\eta^2 n}{81^2 \tilde{K}^2 s^2 C_3}
\right) \right\}\\
&\leq& \exp\left(2s \log 6p + \log n - \frac{(\eta n)^\gamma}{(9 \tilde{K})^\gamma s^{\gamma}C_1}\right)\\
&& + \exp\left(2s \log 6p -\frac{\eta^2 n^2}{81^2 \tilde{K}^2s^{2} C_2\{1+\tilde{K} n s^{5/2}\}}\right)\\
&& + \exp\left(
2s \log 6p - \frac{\eta^2 n}{81^2 \tilde{K}^2 s^2 C_3}
\right)\\
&\to& 0,\quad n \to \infty,
\end{eqnarray*}
where $\mathcal{B}_0(2s) = \{\bm{v} \in \mathbb{R}^p : 
\sum_{j=1}^p 1_{\{v_j \not=0\}} \leq 2s\}$.
This completes the proof.
\end{itemize}
\end{proof}

\subsection{Proofs for Section 4}\label{subsec: proof of sec4}
\begin{proof}[Proof of Proposition \ref{prop gradient diffusion}]
Since it holds that
\begin{eqnarray*}
\Prob(\|\psi_n^{(1)}(\bm{\theta}_0)\|_\infty > {\lambda}_n)
&\leq& \frac{1}{{\lambda}_n} \left\{
\E\left[
\|\bm{A}_n\|_\infty
\right]
+ \E\left[
\left\|
\bm{B}_n
\right\|_\infty
\right]
\right\},
\end{eqnarray*}
where 
\begin{eqnarray*}
\bm{A}_n 
&=& \frac{1}{n \Delta_n} \sum_{k=1}^n
\frac{\partial}{\partial \bm{\theta}}S(\bm{Y}_{t_{k-1}^n};\bm{\theta}_0)
\int_{t_{k-1}^n}^{t_k^n}
\{
S(\bm{Y}_s;\bm{\theta}_0)
-S(\bm{Y}_{t_{k-1}^n};\bm{\theta}_0)
\} \dr s
\end{eqnarray*}
and
\begin{eqnarray*}
\bm{B}_n 
&=& \frac{1}{n \Delta_n} \sum_{k=1}^n
\frac{\partial}{\partial \bm{\theta}}S(\bm{Y}_{t_{k-1}^n};\bm{\theta}_0)
\int_{t_{k-1}^n}^{t_k^n}
\sigma(\bm{Y}_s; h_0) \dr W_s,
\end{eqnarray*}
it suffices to show that 
\[
\E\left[
\left\|
\bm{A}_n
\right\|_\infty
\right]=o({\lambda}_n),\quad n \to \infty
\]
and that
\[
\E\left[
\left\|
\bm{B}_n
\right\|_\infty
\right]=o({\lambda}_n),\quad n \to \infty.
\]
Under our assumptions, we have 
\begin{eqnarray*}
\E\left[
\left\|
\bm{A}_n
\right\|_\infty
\right]
&\leq& \frac{1}{n \Delta_n} \sum_{k=1}^n 
\E\left[
\left\|
\frac{\partial}{\partial \bm{\theta}}S(\bm{Y}_{t_{k-1}^n};\bm{\theta}_0)
\right\|_\infty \left|
\int_{t_{k-1}^n}^{t_k^n}
\{
S(\bm{Y}_s;\bm{\theta}_0)
-S(\bm{Y}_{t_{k-1}^n};\bm{\theta}_0)
\} \dr s
\right|
\right]\\
&\leq& \frac{1}{n \Delta_n} \sum_{k=1}^n 
\E\left[
\left\|
\frac{\partial}{\partial \bm{\theta}}S(\bm{Y}_{t_{k-1}^n};\bm{\theta}_0)
\right\|_\infty
 \int_{t_{k-1}^n}^{t_k^n}
\left|
S(\bm{Y}_s;\bm{\theta}_0)
-S(\bm{Y}_{t_{k-1}^n};\bm{\theta}_0)
\right| \dr s
\right] \\
&\leq& \frac{L_S s}{n \Delta_n} \sum_{k=1}^n 
\E\left[
\left\|
\frac{\partial}{\partial \bm{\theta}}S(\bm{Y}_{t_{k-1}^n};\bm{\theta}_0)
\right\|_\infty\right.\\
&& \left. \cdot \int_{t_{k-1}^n}^{t_k^n}
\left\{
|X_s - X_{t_{k-1}^n}| + \|\bm{Z}_s - \bm{Z}_{t_{k-1}^n}\|_\infty
\right\} \dr s
\right] \\
&\leq&\frac{L_S s}{n \Delta_n} \sum_{k=1}^n 
\E\left[
\left\|
\frac{\partial}{\partial \bm{\theta}}S(\bm{\alpha}_0^\top \phi(X_{t_{k-1}^n}), \bm{\beta}_0^\top \bm{Z}_{t_{k-1}^n})
\right\|_\infty^2\right]^{1/2}\\
&&\cdot \E\left[\left| \int_{t_{k-1}^n}^{t_k^n}
\left\{
|X_s - X_{t_{k-1}^n}| + \|\bm{Z}_s - \bm{Z}_{t_{k-1}^n}\|_\infty
\right\} \dr s\right|^2
\right]^{1/2} \\
&\leq& 2^{1/2}L_S D_2^{1/2} s K_{p} \Delta_n^{1/2},
\end{eqnarray*}
which implies
\[
\frac{1}{{\lambda}_n}\E[\|A_n\|_\infty]
\leq \frac{s K_{p} \Delta_n^{1/2}}{{\lambda}_n} \cdot2L_S D_2^{1/2}
\to 0,\quad n \to \infty.
\]
It follows from Lemma \ref{maximal ineq}
that 
\begin{eqnarray*}
\E\left[
\left\|
\bm{B}_n
\right\|_\infty
\right]
&=& \E\left[
\max_{1 \leq i \leq p} \left|
\sum_{k=1}^n \xi_k^i
\right|
\right] \\
&\leq& 2\sqrt{2} \sqrt{\E\left[
\sum_{k=1}^n \max_{1 \leq i \leq p} \left|
\xi_k^i
\right|^2
\right]}
\sqrt{\log(1+p)},
\end{eqnarray*}
where 
\[
\xi_k^i =\frac{1}{n \Delta_n} \frac{\partial}{\partial \bm{\theta}_i}S(\bm{Y}_{t_{k-1}^n};\bm{\theta}_0)\int_{t_{k-1}^n}^{t_k^n} \sigma(\bm{Y}_s; h_0) \dr W_s,\quad i=1,\ldots,p.
\]
We have 
\begin{eqnarray*}
\lefteqn{\E\left[
\sum_{k=1}^n \max_{1 \leq i \leq p} \left|
\xi_k^i
\right|^2
\right]}\\
&\leq& \frac{1}{n^2 \Delta_n^2}\sum_{k=1}^n\E\left[
\left\|\frac{\partial}{\partial \bm{\theta}} S(\bm{Y}_{t_{k-1}^n};\bm{\theta}_0)\right\|_\infty^2
\E\left[
\left\{\int_{t_{k-1}^n}^{t_k^n}
\sigma(\bm{Y}_s; h_0)
\dr W_s\right\}^2
\middle|\mathcal{F}_{t_{k-1}^n}\right]
\right]\\
&=& \frac{1}{n^2 \Delta_n^2}\sum_{k=1}^n\E\left[
\left\|\frac{\partial}{\partial \bm{\theta}} S(\bm{Y}_{t_{k-1}^n};\bm{\theta}_0)\right\|_\infty^2
\E\left[
\left\{\int_{t_{k-1}^n}^{t_k^n}
\sigma^2(\bm{Y}_s; h_0)
\dr s\right\}
\middle|\mathcal{F}_{t_{k-1}^n}\right]
\right]\\
&\leq& K_{p}^2\frac{1}{n^2 \Delta_n^2}\sum_{k=1}^n
\E\left[
\left\{\int_{t_{k-1}^n}^{t_k^n}
\sigma^2(\bm{Y}_s; h_0)
\dr s\right\}^2
\right]^{1/2}\\
&\leq& \frac{1}{n \Delta_n} K_{p}^4.
\end{eqnarray*}
Therefore, we obtain  
\[
\E\left[
\|\bm{B}_n\|_\infty
\right]
\leq 2\sqrt{2}K_p^2 \sqrt{\frac{\log (1+p)}{n \Delta_n}}.
\]
This completes the proof.
\end{proof}
\begin{proof}[Proof of Proposition \ref{linear diffusion maximal ineq}]
The assertion (i) follows from Assumption \ref{regularity diffusion}-(ii).
To prove the assertion (ii),
it suffices to prove that 
there exists a constant $0 < \eta < f_\infty/2$ such that 
\begin{equation}\label{Wn inequality diffusion}
\Prob\left(
\sup_{\bm{v} \in C_{T_0}\cap \mathbb{S}_p}
|\bm{v}^\top \bm{W}_n \bm{v}| > \eta
\right) \to 0,\quad
n \to \infty,
\end{equation}
where $\bm{W}_n = \bm{V}_n^{(1)}-\bm{V}$.
Let $\bm{Y}(n)$ be the matrix defined by 
\[
\bm{Y}(n) = (\bm{Y}_0,\ldots,\bm{Y}_{t_{n-1}^n}).
\]
Then, it holds that 
\[
\bm{V}_n^{(1)} = \frac{1}{n} \bm{Y}(n) \bm{Y}(n)^\top,\quad
\bm{V} = \frac{1}{n}\E[\bm{Y}(n) \bm{Y}(n)^\top].
\]
For every $\bm{v} \in C_{T_0}\cap \mathbb{S}_p$, we have 
\[
\bm{v}^\top \bm{W}_n \bm{v}
= \frac{1}{n}\{\|\bm{Y}(n)^\top \bm{v}\|_2^2 - \E[\|\bm{Y}(n)^\top \bm{v}\|_2^2]\}.
\]
Noting that $\bm{Y}(n)^\top \bm{v}$ is a centered Gaussian random variable with covariance matrix
\[
\bm{Q}_n = \left(\begin{array}{ccc}
\bm{v}^\top \E[\bm{Y}_0 \bm{Y}_0^\top]\bm{v}& \cdots& \bm{v}^\top \E[\bm{Y}_0 \bm{Y}_{t_{n-1}^n}^\top]\bm{v} \\
\vdots & \ddots & \vdots \\
\bm{v}^\top \E[\bm{Y}_{t_{n-1}^n}\bm{Y}_0^\top]\bm{v}& \cdots&
\bm{v}^\top  \E[\bm{Y}_{t_{n-1}^n}\bm{Y}_{t_{n-1}^n}^\top]\bm{v}
\end{array}
\right),
\]
we can use the Hanson--Wright inequality
(see \cite{rudelson2013hanson} for details) to deduce that 
for every $\xi>0$,
\[
\Prob\left(
\frac{1}{n} \left|
\|\bm{Y}(n)^\top \bm{v}\|_2^2 - \E[\|\bm{Y}(n)^\top \bm{v}\|_2^2]
\right|
> \xi \|\bm{Q}_n\|_2
\right)
\leq 2 \exp(-cn (\xi \land \xi^2)).
\]
From a similar calculation to the 
proof of Lemma 10 in Supplementary material of 
\cite{wong2020lasso}, we have
\[
\|\bm{Q}_n\|_2 
\leq
\max_{0 \leq i \leq n-1} \sum_{l=0}^{n-1} \|\bm{\Sigma}_{Y}((l-i)\Delta_n)\|_2,
\]
where 
\[
\bm{\Sigma}_{Y}((l-i)\Delta_n)
 =C(t_l^n , t_i^n) 
\]
From Assumption 4.7, it holds that 
\[
\|\bm{\Sigma}_{Y}((l-i)\Delta_n)\|_2
= \|\exp(|l-i|\Delta_n \bm{A})\bm{V}\|_2.
\]
Since $\bm{A}$ is diagonalizable, there exists a 
non-singular matrix $\bm{P}$ with $\|\bm{P}\|_2=1$ such that 
\[
\bm{P}^\top \bm{A} \bm{P} = \bm{D},
\]
where $\bm{D}$ is a diagonal matrix consists of eigenvalues of $\bm{A}$.
Therefore, we have 
\begin{eqnarray*}
\|\bm{\Sigma}_{Y}((l-i)\Delta_n)\|_2
&=& \|\exp(|l-i|\Delta_n \bm{A})\bm{V}\|_2\\
&\leq& \|\exp(|l-i|\Delta_n \bm{D})\|_2 \|\bm{V}\|_2\\
&\leq& \exp(|l-i|\Delta_n \varsigma_1) K_p,
\end{eqnarray*}
which implies 
\begin{eqnarray*}
\|\bm{Q}_n\|_2 
&\leq&
\sum_{l=0}^\infty \exp(l \Delta_n \varsigma_1)K_p.
\end{eqnarray*}
\if0
\[
\|Q_n\|_2 \leq \sum_{l=0}^n \rho(l) \|V\|_2
\leq \sum_{l=0}^\infty\rho(l \Delta_n) K_p.
\]
\fi
Then, for 
\[\xi = \frac{\eta}{\sum_{l=0}^\infty \exp(l \Delta_n \varsigma_1)K_p},
\]
we have 
\[
\Prob(|\bm{v}^\top \bm{W}_n \bm{v}| > \eta)\leq
2 \exp(-cn (\xi \land \xi^2)).
\]
Therefore, as well as Proof of Proposition 3.1 in \cite{basu2015regularized} and the $\epsilon$-net argument provided in \cite{vershynin2018high}, 
we have
\begin{eqnarray*}
\Prob \left(
\sup_{\bm{v} \in C_{T_0}\cap \mathbb{S}_p} |\bm{v}^\top \bm{W}_n \bm{v}|
> \eta
\right)
&\leq&
\Prob \left(
9\sup_{\bm{v} \in \mathcal{B}_0(2s)} |\bm{v}^\top \bm{W}_n \bm{v}|
> \eta
\right) \\
&\leq&
\Prob \left(
\sup_{\bm{v} \in \mathcal{B}_0(2s)} |\bm{v}^\top \bm{W}_n \bm{v}|
>  \frac{\eta}{9}
\right) \\
&\leq& \left(\begin{array}{c}
p \\
2s \\
\end{array}
\right) 6^{2s} \cdot 2 \exp(-cn (\xi/9 \land \xi^2/9^2))\\
&\leq& 2 \exp(2s \log 6p - cn (\xi/9 \land \xi^2/9^2))\\
&\to& 0,\quad n \to \infty,
\end{eqnarray*}
which ends the proof.
\end{proof}

\section*{Acknowledgements}

This work was
supported by JSPS KAKENHI Grant Numbers 21K13271(K.F.), 21K13836 (K.T.).
\bibliographystyle{chicago}
\bibliography{DS-ref2}
\end{document}